\newcommand\numberthis{\addtocounter{equation}{1}\tag{\theequation}}
\theoremstyle{definition}
\newtheorem{defn}{Definition}[section]
\newtheorem{theorem}[defn]{Theorem}
\newtheorem{corollary}[defn]{Corollary}
\newtheorem{lemma}[defn]{Lemma}
\newtheorem{prop}[defn]{Proposition}
\newtheorem{remark}[defn]{Remark}
\newtheorem{example}[defn]{Example}
\numberwithin{equation}{section}
\title{The Norm Residue Symbol for Formal Drinfeld Modules}
\author{Marwa Ala Eddine$^{*}$ \and Behzad Omidi Koma}
\date{}
\begin{document}

\maketitle

\begin{abstract}
We study the Kummer pairing associated with formal Drinfeld modules of stable reduction and height one over local fields. Building on the explicit reciprocity framework initiated by Kolyvagin and extended in the context of Drinfeld modules by Anglès and Bars--Longhi, we obtain an explicit description of the pairing in terms of the logarithm of the formal module, trace maps, and explicitly constructed derivations.

Our approach yields a unified and computable formula for the norm residue symbol in this setting and extends previously known results from the Carlitz and sign-normalized rank-one cases to arbitrary finite extensions containing sufficient torsion. As an application, we derive explicit congruences for local norm maps associated with torsion points of formal Drinfeld modules.

\medskip

\noindent Keywords: Formal Drinfeld modules; explicit reciprocity laws; local fields; class field theory; derivations.

\medskip

\noindent \textit{2020 Mathematics Subject Classification: 11F85, 11S31.}
\end{abstract}
\medskip
\medskip

\small{
\noindent $^{*}$Corresponding author: Marwa Ala Eddine\\
\textit{Laboratoire de Math\'ematiques de Besan\c{c}on, Universit\'e Marie et Louis Pasteur,\\
25000 Besan\c{c}on, France}\\
\texttt{marwaalaeddine1@gmail.com}

\vspace{0.4cm}

\noindent Behzad Omidi Koma\\
\textit{Engineering College, Department of Mathematics,\\
American University of the Middle East (AUM), Egaila, Kuwait}
\section{Introduction} 

Explicit reciprocity laws provide concrete descriptions of norm residue symbols in local class field theory. In the setting of Drinfeld modules, such formulas are known in a few special cases, notably for the Carlitz module and for rank-one sign-normalized Drinfeld modules. The purpose of this paper is to obtain an explicit formula for the Kummer pairing associated with formal Drinfeld modules of stable reduction and height one over local fields, extending these results to a broader and more flexible framework.

More precisely, we express the norm residue symbol in terms of the logarithm of the formal Drinfeld module and an explicitly constructed derivation inspired by Kolyvagin’s method. This leads to a computable formula for the pairing and yields explicit congruences for local norm maps associated with torsion points.

Let $K$ be a local field, $p$ be its characteristic, and let $\mu_K$ be its normalized discrete valuation. We denote $\mathcal{O}_K$ the valuation ring of $K$ and $\mathfrak{p}_K$ its maximal ideal. Let $q$ be the order of the residue field $\mathcal{O}_K/\mathfrak{p}_K$. Then $q$ is a power of $p$. Fix an algebraic closure $\Omega$ of $K$, and let $\mu$ be the unique extension of $\mu_K$ to $\Omega$. All the extensions $F$ of $K$ considered in this paper are supposed to be such that $F \subset \Omega$. 
We also denote $\pi_F$ a uniformizer of $F$, $\mathcal{O}_F$ the valuation ring of $F$ and $\mathfrak{p}_F$ its maximal ideal.
Let $K_{ur} \subset \Omega$ be the maximal unramified extension of $K$ in $\Omega$, and $H \subset K_{ur}$ be a finite unramified extension of $K$.

Let 
\begin{align*} 
    \rho: \mathcal{O}_K & \longrightarrow \mathcal{O}_H\{\{\tau \}\} \\
    a &  \mapsto \rho_a
\end{align*}
be a formal Drinfeld module having stable reduction of height one, as defined by Rosen in \cite[\S 1]{Rosen}. Here, $\tau$ is the $q$-Frobenius element satisfying 
\begin{equation}
    \tau x= x^q \tau, ~~ \forall x \in \Omega.
\end{equation}

The formal Drinfeld module $\rho$ induces an action on the completion $\Bar{\Omega}$ of $\Omega$ as follows

\begin{equation} \label{defactionrho}
    a \cdot_{\rho} x =\rho_a(x) ~~ \forall x \in \Bar{\Omega}.
\end{equation} 
For an integer $n \ge 0$, let 
$$    V_{\rho}^n= \{ \alpha \in \Bar{\Omega} ;~ \rho_{a}(\alpha)=0 ~~\forall a \in \mathfrak{p}_K^n \} $$
be the $\mathfrak{p}_K^n$ torsion submodule of $\Bar{\Omega}$ for the action (\ref{defactionrho}). It is isomorphic as an $\mathcal{O}_K$- module to $ \mathcal{O}_K / \mathfrak{p}_K^n$.  Any element $v_0 \in V_{\rho}^n \setminus V_{\rho}^{n-1}$ is therefore a generator of $V_{\rho}^n$ and the extension $H_{\rho}^n=H(V_{\rho}^n)$ is equal to $H(v_0)$. For more details see \cite{Iwasawa,Oukhaba}.

Now let $m_0 \ge 1$ be an integer dividing $[H:K]$, and $\eta \in K$ of valuation $\mu(\eta)=m_0$.  Let 
$$
     W_{\rho}^n=V_{\rho}^{nm_0} =\{ \alpha \in \mathfrak{p}_{\Bar{\Omega}} ; ~\rho_{\eta^n}(\alpha)=0\},~~ \text{and} ~~ W_{\rho}= \bigcup_n V_{\rho}^n =\bigcup_n W_{\rho}^n.
$$

Fix once and for all a generator $v_n$ of $W_{\rho}^n$. Let $$E_{\rho}^n= H(W_{\rho}^n)=H_{\rho}^{nm_0}.$$ 
 
\noindent If $L$ is a finite extension of $E_{\rho}^n$, then we denote by
$$\Phi_L: L^{\times} \rightarrow Gal(L^{ab}|L) $$
the norm residue map. By \cite[Lemma 2.1]{Papier1}, for each $\alpha \in \mathfrak{p}_{L}$, there exists $\xi \in L^{ab}$ such that $\rho_{\eta^n}(\xi)=\alpha$. Therefore we can define the map $(~,~)_{\rho,L,n}:\mathfrak{p}_L \times L^{\times} \longrightarrow W_{\rho}^n$ such that
\begin{equation} \label{eqdefpairing}
    (\alpha,\beta)_{\rho,L,n}=\Phi_L(\beta)(\xi)-\xi; ~~\rho_{\eta^n}(\xi)=\alpha,
\end{equation}
for $\alpha \in \mathfrak{p}_{L}$ and $\beta \in L^{\times}$. We omit $\rho$ in the index when there is no risk of confusion. \newline

The objective of this paper is to find explicit formulas for the pairing \eqref{eqdefpairing}, generalizing the results those found in a previous work \cite{Papier1} and in the special cases considered by Angl\`es \cite{angles} and Bars-Longhi \cite{Ignazio}. The main ingredient we use here is the derivations and their properties, following the work of Kolyvagin \cite{kolyvagin} in the case of zero characteristic. The results we obtain are the following.

\begin{theorem} (Theorem \ref{mainTH})
    Suppose that $L|K$ is a separable extension, then there exists an $\mathcal{O}_K$-derivation $\operatorname{\Bar{D}}_{L,v_n}$ from $\mathcal{O}_L$ into a certain $\mathcal{O}_L$-submodule $W$ of $L$ such that 
\begin{equation} \label{mainequationD}
(\alpha,\beta)_{L,n}=  T_{L|K}(\lambda_{\rho}(\alpha)\operatorname{dlog\Bar{D}}_{L,v_n}(\beta)) \cdot_{\rho} v_n
\end{equation}
for all $\beta \in L^{\times}$ and $\alpha \in L$ of valuation $\mu(\alpha) > \frac{nm_0}{q} + \frac{1}{q-1} + \frac{1}{e(L|K)}$. Here, $\lambda_{\rho} $ is the logarithm of $\rho$ and $e(L|K)$ is the ramification index of $L|K$. For $\beta = u \pi_L^k \in L^{\times}$, the logarithmic  derivative $\operatorname{dlog\Bar{D}}_{L,v_n}$ associated with the derivation $\operatorname{\Bar{D}}_{L,v_n}$ is defined as follows
\begin{equation}
    \operatorname{dlog\Bar{D}}_{L,v_n}(\beta)=\dfrac{\operatorname{\Bar{D}}_{L,v_n}(u)}{u} + k \dfrac{\operatorname{\Bar{D}}_{L,v_n}(\pi_L)}{\pi_L}.
\end{equation}
\end{theorem}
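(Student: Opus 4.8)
The plan is to prove the formula by the classical three‑step strategy for explicit reciprocity laws: reduce the number of cases by bi‑additivity and continuity, descend the general separable extension $L$ to the field $E_\rho^n$ where the result is already available from \cite{Papier1}, and then linearize the Kummer symbol with the logarithm $\lambda_\rho$. First I would record the structural properties that cut down the cases. Both sides of (\ref{mainequationD}) are additive in $\alpha$ (the left by bilinearity of the Kummer pairing, the right by $\mathcal{O}_K$‑linearity of $\lambda_\rho$ and of $T_{L|K}$) and additive in $\beta$ once we pass to logarithmic derivatives (the left because $\beta\mapsto(\alpha,\beta)_{L,n}$ is a homomorphism $L^\times\to W_\rho^n$, the right by the defining formula for $\operatorname{dlog}$). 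Moreover, for $\alpha$ of the prescribed large valuation the pairing is continuous and annihilates the unit filtration from some finite level on, so it suffices to verify (\ref{mainequationD}) for $\beta$ running over a fixed uniformizer $\pi_L$ and over a set of principal units generating the relevant quotient of units. The hypothesis $\mu(\alpha) > \tfrac{nm_0}{q} + \tfrac{1}{q-1} + \tfrac{1}{e(L|K)}$ is precisely the threshold that makes this truncation legitimate and, more importantly, forces the nonlinear part of the formal module law to contribute trivially.

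Second, I would descend from $L$ to $M := E_\rho^n$, where the theorem reduces to the formula of \cite{Papier1}. Two functorialities drive the descent. In the second variable, the norm‑compatibility of the Artin map, $\Phi_L(\beta)|_{M^{ab}} = \Phi_M(N_{L/M}\beta)$, combined with the fact that $\xi$ may be taken in $M^{ab}$ when $\alpha\in\mathfrak{p}_M$, gives $(\alpha,\beta)_{L,n} = (\alpha, N_{L/M}\beta)_{M,n}$; its analytic counterpart is the identity $\operatorname{dlog}(N_{L/M}\beta) = \operatorname{Tr}_{L/M}(\operatorname{dlog}\beta)$, which manufactures one factor of the trace and explains the factorization $T_{L|K} = T_{M|K}\circ \operatorname{Tr}_{L/M}$. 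In the first variable, I would use the Galois‑equivariance $\sigma(\alpha,\beta)_{L,n}=(\sigma\alpha,\sigma\beta)_{L,n}$ for $\sigma\in\operatorname{Gal}(L/M)$ together with the fact that the values lie in the $\operatorname{Gal}(L/M)$‑fixed module $W_\rho^n\subset M$ to replace $\alpha\in\mathfrak{p}_L$ by $\operatorname{Tr}_{L/M}\alpha\in\mathfrak{p}_M$. Reconciling these two reductions for a general pair $(\alpha,\beta)$, and checking that the derivation $\bar D_{L,v_n}$ is constructed so as to be compatible with $\bar D_{M,v_n}$ under exactly these trace maps, is the bookkeeping core of this step.

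Third, with the problem reduced to $L=M=E_\rho^n$ and to the two generating cases for $\beta$, it remains to linearize the formal difference $\Phi_M(\beta)(\xi)-\xi$. Writing the Artin symbol of $\beta$ through its action on the torsion generator $v_n$ via $\rho$, and applying $\lambda_\rho$ to convert the $\rho$‑difference into ordinary addition, the main term becomes $\lambda_\rho(\alpha)$ times the logarithmic derivative encoded by $\bar D_{M,v_n}$, while $\cdot_\rho v_n$ restores the value inside $W_\rho^n$; this is where the shape of the right‑hand side of (\ref{mainequationD}) is produced and matched against \cite{Papier1}.

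The main obstacle, as always in this circle of ideas, is the error analysis. I must show that every nonlinear contribution — from the formal group law, from the truncation of $\lambda_\rho$ and of its inverse, and from the ramification of $L/K$ — has valuation strictly larger than that of the main term and therefore disappears after $\rho_{\eta^n}$‑division and after applying the trace. Tracking these valuations is exactly where the explicit bound $\tfrac{nm_0}{q}+\tfrac{1}{q-1}+\tfrac{1}{e(L|K)}$ must be used sharply: the term $\tfrac{nm_0}{q}$ controls the Frobenius‑type drop coming from the height‑one reduction, $\tfrac{1}{q-1}$ governs the convergence of $\lambda_\rho$ near the torsion points, and $\tfrac{1}{e(L|K)}$ reflects the contribution of the different of $L/K$ to the trace. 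Establishing these estimates uniformly, and ensuring they survive the descent of the second paragraph, is the delicate point on which the whole argument turns.
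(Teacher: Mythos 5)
Your outline follows the Iwasawa--Wiles descent template, but it misses the two points on which the paper's argument actually turns, and the descent step as you describe it cannot be carried out. First, the theorem is an \emph{existence} statement: the derivation $\operatorname{\Bar{D}}_{L,v_n}$ is not given in advance, and constructing it is the main content. In the paper it is defined by $\operatorname{D}_{L,v_n}(\alpha)=\alpha\Psi_{L,v_n}(\alpha)$, where $\Psi_{L,v_n}$ is the (already available but non-explicit) map of Proposition \ref{iwasawamappsi}; the Leibniz rule is then automatic, and the real work is proving additivity modulo $\mathfrak{X}_{L,1}^{(n)}$, which is done by passing to an isomorphic module $\rho'$ with $(x,x)_{\rho',L,n}=0$ (Lemma \ref{condrhoprime}) and exploiting the identity $(\alpha u,u)_{L,n}=\operatorname{T}_{L|K}((1-u)\operatorname{D}_{L,v_n}(\alpha))\cdot_\rho v_n$ of Lemma \ref{alphauu}. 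Nothing in your proposal produces the derivation or its additivity; the phrase ``checking that $\operatorname{\Bar{D}}_{L,v_n}$ is constructed so as to be compatible with $\operatorname{\Bar{D}}_{M,v_n}$'' presupposes exactly what has to be built. Once the derivation exists, the proof of \eqref{mainequationD} is nearly tautological: one checks $\operatorname{dlog\Bar{D}}_{L,v_n}(\beta)-\Psi_{L,v_n}(\beta)\in\pi_L^{-1}\mathfrak{X}_{L,1}^{(n)}$ directly from the definition, and verifies that this discrepancy is annihilated by $\operatorname{T}_{L|K}(\lambda_\rho(\alpha)\,\cdot\,)$ for $\alpha$ of the stated valuation. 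No linearization of $\Phi_L(\beta)(\xi)-\xi$ and no fresh error analysis of the formal module law is needed at that stage; all of that is already packaged into Proposition \ref{iwasawamappsi}.

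Second, the descent from $L$ to $M=E_\rho^n$ fails at the level of generality the theorem claims. The norm $\operatorname{N}_{L|M}$ is not surjective, so the formula on $M$ evaluated at $\operatorname{N}_{L|M}(\beta)$ does not determine $(\alpha,\beta)_{L,n}$ when $\alpha\in\mathfrak{p}_L$ does not lie in $M$; dually, replacing $\alpha$ by $\operatorname{T}_{L|M}(\alpha)$ via Proposition \ref{PropoertiesPairing1}(iv) requires $\beta\in M^{\times}$. For a pair $(\alpha,\beta)$ with both entries genuinely in $L$ the two reductions cannot be performed simultaneously, and this is precisely the obstruction that forces the Kolyvagin-style strategy of working with $\Psi_{L,v_n}$ directly on $L$. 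Moreover, the input you propose to descend to, the explicit formula of \cite{Papier1}, is used in this paper only for the special values $\beta=v_m$ on $L=E_\rho^m$ under the extra hypothesis $\rho_\eta\equiv\tau^{m_0}\bmod\mathfrak{p}_H$ (to compute $\operatorname{\Bar{D}}_{E_\rho^m,v_n}(v_m)=1/\eta^m$ in Proposition \ref{propValueD}); it is not a full reciprocity law on $E_\rho^n$ from which the general case could be bootstrapped.
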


An advantage of having a derivation is that it is determined and explicitly constructible in terms of its value at a uniformizer $\pi_L$ of L as follows. For $x \in \mathcal{O}_L$, we can write
\begin{equation}
    \operatorname{\Bar{D}}_{L,v_n}(x)=f'(\pi_L)\operatorname{\Bar{D}}_{L,v_n}(\pi_L),
\end{equation}
where $f$ is the unique power series in $\mathbb{F}_{q_L}[[X]]$ such that $x=f(\pi_L)$. Here, $\mathbb{F}_{q_L}$ denotes the residue field of $L$.
In the particular case where $L=E_{\rho}^m$ and $\pi_L=v_m$, our previous work in \cite{Papier1} implies the subsequent proposition.

\begin{prop} (Proposition \ref{propValueD})
Suppose $\rho$ is such that $ \rho_{\eta} \equiv \tau^{m_0} \mod \mathfrak{p}_H$. For all $m \ge n$, we have
\begin{equation}
     \operatorname{\Bar{D}}_{E_{\rho}^m,v_n}(v_m)=\frac{1}{\eta^m}.
\end{equation}
\end{prop}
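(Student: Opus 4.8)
The strategy is to read off the value $\operatorname{\Bar{D}}_{E_{\rho}^m,v_n}(v_m)$ by confronting the general formula (\ref{mainequationD}) of the main theorem with the explicit description of the Kummer pairing over the tower $E_{\rho}^{\bullet}$ obtained in \cite{Papier1}. I would begin by unwinding the hypothesis $\rho_{\eta}\equiv \tau^{m_0}\pmod{\mathfrak{p}_H}$: it says precisely that $\rho_{\eta}(X)\equiv X^{q^{m_0}}\pmod{\mathfrak{p}_H}$, while $\rho_{\eta}(X)=\eta X+\cdots$, so $\rho_{\eta}$ is a Lubin--Tate-type series for the uniformizer $\eta$ with residual $q^{m_0}$-power Frobenius. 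In particular each $v_m$ is a genuine uniformizer of $E_{\rho}^m$, which is exactly the framework in which the formulas of \cite{Papier1} were established; hence those formulas apply to $L=E_{\rho}^m$ with the distinguished uniformizer $\pi_L=v_m$.

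Next I would put both descriptions of $(\alpha,\beta)_{E_{\rho}^m,n}$ into the same shape. Using the derivation property recalled just before the statement, for $\beta=u\,v_m^{\,k}\in (E_{\rho}^m)^{\times}$ one has
\begin{equation*}
\operatorname{dlog\Bar{D}}_{E_{\rho}^m,v_n}(\beta)=\frac{1}{\beta}\frac{d\beta}{dv_m}\,\operatorname{\Bar{D}}_{E_{\rho}^m,v_n}(v_m),
\end{equation*}
where $\tfrac{d\beta}{dv_m}$ is the formal derivative with respect to $v_m$ of the power series expressing $\beta$; thus (\ref{mainequationD}) reads
\begin{equation*}
(\alpha,\beta)_{E_{\rho}^m,n}=T_{E_{\rho}^m|K}\!\left(\lambda_{\rho}(\alpha)\,\frac{1}{\beta}\frac{d\beta}{dv_m}\,\operatorname{\Bar{D}}_{E_{\rho}^m,v_n}(v_m)\right)\cdot_{\rho}v_n.
\end{equation*}
On the other hand, the formula of \cite{Papier1} for the same pairing over $E_{\rho}^m$ is of precisely this form but with the scalar $\operatorname{\Bar{D}}_{E_{\rho}^m,v_n}(v_m)$ replaced by $\tfrac{1}{\eta^m}$. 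The conceptual reason the constant is $\tfrac{1}{\eta^m}$ is the identity obtained by differentiating $\lambda_{\rho}(\rho_{\eta^m}(X))=\eta^m\lambda_{\rho}(X)$ and evaluating at the $\eta^m$-torsion point $v_m$, namely $\rho_{\eta^m}'(v_m)=\eta^m\lambda_{\rho}'(v_m)$; since the logarithm of a formal Drinfeld module is $\mathbb{F}_q$-linear its formal derivative is identically $1$, so $\lambda_{\rho}'(v_m)=1$ and $\rho_{\eta^m}'(v_m)=\eta^m$.

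It then remains to promote the agreement of the two pairing formulas, valid for all admissible $\alpha$ and all $\beta$, into equality of the two scalars. For this I would set $c=\operatorname{\Bar{D}}_{E_{\rho}^m,v_n}(v_m)-\tfrac{1}{\eta^m}\in E_{\rho}^m$ and observe that
\begin{equation*}
T_{E_{\rho}^m|K}\!\left(\lambda_{\rho}(\alpha)\,\frac{1}{\beta}\frac{d\beta}{dv_m}\,c\right)\cdot_{\rho}v_n=0
\end{equation*}
for every admissible pair $(\alpha,\beta)$. Letting $\beta$ vary so that $\tfrac{1}{\beta}\tfrac{d\beta}{dv_m}$ sweeps out enough elements of $E_{\rho}^m$, and letting $\alpha$ range over elements whose logarithms $\lambda_{\rho}(\alpha)$ fill a sufficiently large $\mathcal{O}_K$-lattice, the non-degeneracy of the trace form $T_{E_{\rho}^m|K}$ (available since $E_{\rho}^m|K$ is separable) forces $c=0$, giving the claim uniformly in $m\ge n$.

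The step I expect to be the main obstacle is precisely this last determination: one has to verify that the valuation restriction $\mu(\alpha)>\tfrac{nm_0}{q}+\tfrac{1}{q-1}+\tfrac{1}{e(E_{\rho}^m|K)}$ still leaves $\lambda_{\rho}(\alpha)$ enough freedom to separate elements under the trace pairing \emph{modulo the annihilator of $v_n$} for the $\rho$-action, and to keep careful track of the several normalizations in play (the logarithm $\lambda_{\rho}$, the trace $T_{E_{\rho}^m|K}$, and the identification of scalars with elements of $W_{\rho}^n$ via $\cdot_{\rho}v_n$). Once this bookkeeping is settled, the value $\operatorname{\Bar{D}}_{E_{\rho}^m,v_n}(v_m)=\tfrac{1}{\eta^m}$ follows.
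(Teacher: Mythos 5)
Your proposal is correct and follows essentially the same route as the paper: the paper likewise specializes Theorem \ref{mainTH} to $L=E_{\rho}^m$, $\beta=v_m$, compares the result with the formula $(\alpha,v_m)_{L,n}=\frac{1}{\eta^m}T_{L|K}(\lambda_{\rho}(\alpha)\frac{1}{v_m})\cdot_{\rho}v_n$ imported from \cite{Papier1}, and reads off $\operatorname{\Bar{D}}_{E_{\rho}^m,v_n}(v_m)$ by the same trace-duality uniqueness argument that is made explicit in the proof of Theorem \ref{mainTH} and in Lemma \ref{lemmaUniqdetermined}. The one imprecision is your claim that non-degeneracy of the trace forces $c=\operatorname{\Bar{D}}_{E_{\rho}^m,v_n}(v_m)-\frac{1}{\eta^m}$ to vanish in $L$: the pairing values only constrain traces modulo $\eta^n\mathcal{O}_K$, so one only gets $c\in\mathfrak{X}_{L,1}^{(n)}$ --- which is exactly the asserted equality, since $\operatorname{\Bar{D}}_{L,v_n}$ takes values in $\mathfrak{X}_{L,1}/\mathfrak{X}_{L,1}^{(n)}$, and your closing paragraph already anticipates this point.
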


Finally, using invariants attached to the representation $\mathbf{r}:\operatorname{Gal}(\Omega|H) \to \operatorname{GL}_1(\mathcal{O}_K) = \mathcal{U}_K$, which is induced by the action of $\operatorname{Gal}(\Omega|H)$ on the module $\varprojlim W_{\rho}$, we get the following congruence, which is equivalent to Proposition \ref{propValueD}, and of which we do not have a direct proof.

\begin{prop} (Corollary \ref{coroCong})
Suppose $\rho$ is such that $ \rho_{\eta} \equiv \tau^{m_0} \mod \mathfrak{p}_H$ and let $L=E_{\rho}^m$ for $m \ge n$. Let $u$ be a unit of $L$ such that $\mu(1-u) > \max\{\frac{nm_0}{q},\frac{1}{q-1}\} + \frac{1}{q-1}$. Then
\begin{equation}
    \operatorname{N}_{L|K}(u^{-1})-1 \equiv \operatorname{T}_{L|K}((\frac{1-u}{u})(1- \frac{g'(v_m)}{u}v_m))  \mod \mathfrak{p}_K^{(n+m)m_0},
\end{equation}
where $g(X) \in \mathbb{F}_{q_L}[[X]]$ is such that $g(v_m)=u$.
\end{prop}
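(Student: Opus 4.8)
The plan is to evaluate one and the same Kummer pairing value in two different ways, and to read off the stated congruence as the consistency condition between them; this indirect route, passing through the Kummer pairing, is the sense in which the statement is not proved ``directly'' from properties of $\operatorname{N}_{L|K}$ and $\operatorname{T}_{L|K}$. Throughout I use that, since $\rho_{\eta}\equiv\tau^{m_0}\bmod\mathfrak{p}_H$ and $\rho$ has height one, $\rho$ is of Lubin--Tate type over $K$ with parameter $\eta$, so that $\mathbf{r}$ computes the Galois action on all of $W_{\rho}$ by $\sigma(w)=\rho_{\mathbf{r}(\sigma)}(w)=\mathbf{r}(\sigma)\cdot_{\rho}w$. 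First I would identify the left-hand side with the representation: by functoriality of the reciprocity map, $\Phi_L(u)|_{H(W_{\rho})}=\Phi_H(\operatorname{N}_{L|H}(u))|_{H(W_{\rho})}$, and since $H|K$ is unramified the Lubin--Tate reciprocity for $\rho$ gives $\mathbf{r}(\Phi_H(\gamma))=\operatorname{N}_{H|K}(\gamma)^{-1}$ for $\gamma\in\mathcal{O}_H^{\times}$. Composing yields $\mathbf{r}(\Phi_L(u))=\operatorname{N}_{L|K}(u)^{-1}$, so the left-hand side is exactly $\operatorname{N}_{L|K}(u^{-1})-1=\mathbf{r}(\Phi_L(u))-1$.

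Next I would feed this into the pairing. Because $E_{\rho}^{n+m}|L$ is abelian, $v_{n+m}\in L^{ab}$, and with the compatible normalization $\rho_{\eta}(v_k)=v_{k-1}$ I may take $\xi=v_{n+m}$ as a solution of $\rho_{\eta^n}(\xi)=v_m\in\mathfrak{p}_L$. By the definition of the pairing and additivity of $a\mapsto\rho_a$,
\begin{align*}
(v_m,u)_{L,n} &= \Phi_L(u)(v_{n+m})-v_{n+m} \\
&= \rho_{\mathbf{r}(\Phi_L(u))}(v_{n+m})-v_{n+m}=\rho_{\mathbf{r}(\Phi_L(u))-1}(v_{n+m}).
\end{align*}
The hypothesis on $\mu(1-u)$ is calibrated so that $\mu_K(\operatorname{N}_{L|K}(u^{-1})-1)\ge mm_0$; hence $\mathbf{r}(\Phi_L(u))-1=\eta^m c$ with $c\in\mathcal{O}_K$, and since $\rho_{\eta^m}(v_{n+m})=v_n$ this gives $(v_m,u)_{L,n}=\rho_c(v_n)=c\cdot_{\rho}v_n$ with $c\equiv(\operatorname{N}_{L|K}(u^{-1})-1)/\eta^m\bmod\mathfrak{p}_K^{nm_0}$.

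Finally I would evaluate the very same pairing by the explicit reciprocity machinery. Proposition \ref{propValueD} gives $\operatorname{\Bar{D}}_{L,v_n}(v_m)=1/\eta^m$, whence $\operatorname{\Bar{D}}_{L,v_n}(u)=g'(v_m)/\eta^m$ and $\operatorname{dlog\Bar{D}}_{L,v_n}(u)=g'(v_m)/(u\eta^m)$. Matching the coefficient of $v_n$ from the two computations modulo $\mathfrak{p}_K^{nm_0}$, and then clearing the factor $\eta^m$ (of valuation $mm_0$), upgrades the comparison to a congruence modulo $\mathfrak{p}_K^{(n+m)m_0}$, with $\operatorname{N}_{L|K}(u^{-1})-1$ on the left and the trace $\operatorname{T}_{L|K}(\cdots)$ on the right; the factor $1-\frac{g'(v_m)}{u}v_m$ and the weight $\frac{1-u}{u}$ emerge from expanding $\lambda_{\rho}$ near $v_m$ together with the logarithmic derivative above.

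The main obstacle is precisely this last step: the uniformizer $v_m$ has valuation $1/e(L|K)$, far below the threshold $\frac{nm_0}{q}+\frac{1}{q-1}+\frac{1}{e(L|K)}$ required in Theorem \ref{mainTH}, so the explicit formula cannot be applied to $\alpha=v_m$ verbatim. One must instead run the boundary version of the local computation underlying \cite{Papier1}, where the Taylor expansion of $\lambda_{\rho}$ about $v_m$ and an integration by parts inside the trace produce the correction term, and carefully control the error terms so that the resulting precision is exactly $\mathfrak{p}_K^{(n+m)m_0}$. It is this reliance on the pairing and on $\mathbf{r}$, rather than any self-contained manipulation of norms and traces, that accounts for the absence of a direct proof.
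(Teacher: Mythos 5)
Your first half is sound and matches the paper: identifying $\operatorname{N}_{L|K}(u^{-1})-1$ with $\eta^m\chi_{L,m,n}(u)$ via the pairing value $(v_m,u)_{L,n}=\Phi_L(u)(v_{m+n})-v_{m+n}$ and the representation $\mathbf{r}$ is exactly the content of \eqref{eqpairingchi} together with \eqref{lc5} (the paper even gets $\mathbf{r}(\Phi_L(u))\equiv 1 \bmod \eta^m$ for free from the fact that $\Phi_L(u)$ fixes $E_{\rho}^m$, with no need to estimate $\mu_K(\operatorname{N}_{L|K}(u^{-1})-1)$ from the hypothesis on $u$). The genuine gap is in the second half: you try to evaluate $(v_m,u)_{L,n}$ by applying Theorem \ref{mainTH} with $\alpha=v_m$, correctly observe that $v_m$ violates the valuation threshold, and then defer to an unspecified ``boundary version of the local computation'' involving a Taylor expansion of $\lambda_{\rho}$ about $v_m$ and an integration by parts inside the trace. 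That step is never carried out, and it is precisely the step the whole corollary rests on; as written, the proof does not close.

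The paper's resolution is different and concrete: it never applies Theorem \ref{mainTH} to $v_m$ at all. Instead it uses Lemma \ref{alphauu}, whose hypothesis is on the \emph{product} $\mu(\alpha(1-u))>\frac{nm_0}{q}+\frac{1}{q-1}$ rather than on $\mu(\alpha)$ alone. Taking $\alpha=v_m u^{-1}$, so that $\alpha u=v_m$, the assumed lower bound on $\mu(1-u)$ makes the product condition hold even though $\mu(v_m)$ is tiny, and one gets $(v_m,u)_{L,n}=\operatorname{T}_{L|K}\bigl((1-u)\operatorname{\Bar{D}}_{L,v_n}(v_m u^{-1})\bigr)\cdot_{\rho}v_n$ (after first replacing $\rho$ by the isomorphic $\rho'$ of Lemma \ref{condrhoprime} so that $(x,x)_{L,n}=0$, and using Lemma \ref{chiisomstable} to see that $\chi_{L,m,n}$ is unchanged). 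The factor $1-\frac{g'(v_m)}{u}v_m$ then comes purely from the Leibniz rule $\operatorname{\Bar{D}}_{L,v_n}(v_m u^{-1})=u^{-2}\bigl(u\operatorname{\Bar{D}}_{L,v_n}(v_m)-v_m\operatorname{\Bar{D}}_{L,v_n}(u)\bigr)$ and the chain rule $\operatorname{\Bar{D}}_{L,v_n}(g(\pi_L))=g'(\pi_L)\operatorname{\Bar{D}}_{L,v_n}(\pi_L)$ --- no expansion of $\lambda_{\rho}$ near $v_m$ enters. This yields the congruence \eqref{conguniq}; combined with the uniqueness statement of Lemma \ref{lemmaUniqdetermined} and the value $\operatorname{\Bar{D}}_{E_{\rho}^m,v_n}(v_m)=\frac{1}{\eta^m}$ from Proposition \ref{propValueD}, it gives \eqref{lc1}, and multiplying by $\eta^m$ upgrades the modulus to $\mathfrak{p}_K^{(n+m)m_0}$ as you intended. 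To repair your argument you should replace the appeal to Theorem \ref{mainTH} at $\alpha=v_m$ by this use of Lemma \ref{alphauu} with $\alpha=v_m u^{-1}$.
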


The method used to obtain \eqref{mainequationD} was inspired by the work of Kolyvagin \cite{kolyvagin}, in which he proved explicit formulas for the Kummer pairing in the case of formal groups of finite height in zero characteristic. The results of Kolyvagin extended those of Iwasawa \cite{Iwasawa2} and Wiles \cite{Wiles}, who proved explicit laws for the Kummer pairing associated to the multiplicative group and to general Lubin-Tate formal groups respectively. The results obtained here extend those of Angl\`es \cite{angles} proved for Carlitz module, and of Bars and Longhi \cite{Ignazio} proved for sign-normalized rank one Drinfeld modules. In his turn, Florez \cite{Florez1,Florez2} followed Kolyvagin's method to generalize the latter's work and prove explicit laws in the case of formal groups and Lubin-Tate formal groups defined over arbitrary higher local field of mixed characteristic. Whence, one may ask if we can generalize our results as well to local fields of higher dimension.

The paper is organized as follows. In Section 2 we recall basic properties of the Kummer pairing. Section 3 is devoted to the construction of the associated derivation and to the proof of the main theorem.
\section{Properties of the Kummer pairing}
In this section, we state some of the main properties of the pairing $(~,~)_{L,n}$. Throughout this section, fix a positive integer $n$ and a finite extension $L$ of $E_{\rho}^n$.
\begin{prop} \label{PropoertiesPairing1}
The map $(~,~)_{L,n}$ satisfies the following properties

\begin{enumerate} [label=(\roman*)]
    \item The map $(~,~)_{L,n}$ is bilinear and $\mathcal{O}$-linear in the first coordinate for the action (\ref{defactionrho}).
    \item We have $$(\alpha,\beta)_{L,n}=0  \iff \beta  \text{ is a norm from } L(\xi)\text{, where } \rho_{\eta^n}(\xi)=\alpha.$$ 
    \item Let $M$ be a finite separable extension of $L$, let $\alpha \in \mathfrak{p}_L$ and $\beta \in M^{\times}$. Then $(\alpha,\beta )_{M,n} =(\alpha,\operatorname{N}_{M|L}(\beta))_{L,n}$.
    \item Let $M$ be a finite separable extension of $L$ of degree $d$, let $\alpha \in \mathfrak{p}_M$ and $\beta \in L^{\times}$. Then $(\alpha,\beta)_{M,n}=(\operatorname{T}_{M|L}(\alpha),\beta)_{L,n}$.
    \item Suppose $L \supset E_{\rho}^m$ for $m \ge n$. Then 
    $$  (\alpha,\beta)_{L,n} = \rho_{\eta^{m-n}}((\alpha,\beta)_{L,m} ) = (\rho_{\eta^{m-n}}(\alpha),\beta)_{L,m} .$$
    \item Let $\rho'$ be a formal Drinfeld $\mathcal{O}$-module isomorphic to $\rho$, i.e there exists a power series $t$ invertible in $\mathcal{O}_H\{\{\tau\}\}$ such that $\rho'_a= t^{-1} \circ \rho_a \circ t$ for all $a \in \mathcal{O}$. Then we have $(\alpha,\beta)_{\rho',L,n}= t^{-1}((t(\alpha),\beta)_{\rho,L,n})$.
\end{enumerate}
\end{prop}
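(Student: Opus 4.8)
The plan rests on one principle used repeatedly: every value of the pairing lies in $W_{\rho}^n \subseteq E_{\rho}^n \subseteq L$, hence is fixed by every element of $\operatorname{Gal}(\Omega|L)$, while the operators in play --- the $\rho_a$, the multiplication $\rho_{\eta^n}$, and the isomorphism $t$ of (vi) --- all have coefficients in $\mathcal{O}_H \subseteq L$ and are additive, so they commute with the $\operatorname{Gal}(\Omega|L)$-action. (Independence of the chosen $\xi$ follows at once: two choices differ by an element of $W_{\rho}^n \subset L$, which $\Phi_L(\beta)$ fixes.) For (i), bilinearity in the second variable uses that $\Phi_L$ is a homomorphism together with the fact that $(\alpha,\beta_2)_{L,n} \in L$ is fixed by $\Phi_L(\beta_1)$, giving $\Phi_L(\beta_1)\Phi_L(\beta_2)(\xi)-\xi = (\alpha,\beta_1)_{L,n}+(\alpha,\beta_2)_{L,n}$; additivity in the first variable follows from additivity of $\rho_{\eta^n}$ (take $\xi_1+\xi_2$), and $\mathcal{O}$-linearity from the fact that $\rho_a$ commutes with both $\rho_{\eta^n}$ and $\Phi_L(\beta)$, so $\rho_a(\xi)$ represents $a\cdot_{\rho}\alpha$ and $\rho_a$ factors out of $\Phi_L(\beta)(\xi)-\xi$.

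Part (ii) is the class field theory input. The extension $L(\xi)|L$ is abelian, since $\sigma \mapsto \sigma(\xi)-\xi$ embeds $\operatorname{Gal}(L(\xi)|L)$ into $W_{\rho}^n$; as $\xi$ generates $L(\xi)$ over $L$, we have $(\alpha,\beta)_{L,n}=0$ if and only if $\Phi_L(\beta)$ acts trivially on $L(\xi)$. By the norm-subgroup characterization of the reciprocity map this holds exactly when $\beta \in \operatorname{N}_{L(\xi)|L}(L(\xi)^{\times})$.

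Parts (iii), (v) and (vi) are all proved by producing a single auxiliary element that represents both sides simultaneously. For (iii), since $\alpha \in \mathfrak{p}_L$ the same $\xi \in L^{ab}$ serves both pairings, and the norm functoriality of the reciprocity map, $\Phi_L \circ \operatorname{N}_{M|L} = \operatorname{res}_{L^{ab}} \circ\, \Phi_M$, gives $\Phi_M(\beta)(\xi)=\Phi_L(\operatorname{N}_{M|L}(\beta))(\xi)$. For (v), writing $\rho_{\eta^m}(\zeta)=\alpha$ and setting $\xi=\rho_{\eta^{m-n}}(\zeta)$ yields $\rho_{\eta^n}(\xi)=\alpha$, and pulling $\rho_{\eta^{m-n}}$ through $\Phi_L(\beta)$ gives the first equality; for the second, a $\xi$ with $\rho_{\eta^n}(\xi)=\alpha$ already satisfies $\rho_{\eta^m}(\xi)=\rho_{\eta^{m-n}}(\alpha)$, so the same $\xi$ computes $(\rho_{\eta^{m-n}}(\alpha),\beta)_{L,m}$. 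For (vi), if $\rho'_{\eta^n}(\xi)=\alpha$ then $\zeta=t(\xi)$ satisfies $\rho_{\eta^n}(\zeta)=t(\alpha)$, and pulling the additive $t^{-1}$ (coefficients in $\mathcal{O}_H$) through $\Phi_L(\beta)$ produces $t^{-1}((t(\alpha),\beta)_{\rho,L,n})$.

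Part (iv) is the main obstacle, because here $\beta$ lies in the \emph{smaller} field $L$, so the governing functoriality is not the norm but the transfer (Verlagerung): $\Phi_M(\beta)=\operatorname{Ver}(\Phi_L(\beta))$. I would make this concrete. Fix coset representatives $\tilde\sigma_1,\dots,\tilde\sigma_d \in \operatorname{Gal}(\Omega|L)$ of $\operatorname{Gal}(\Omega|M)$, choose $\xi$ with $\rho_{\eta^n}(\xi)=\alpha$, and note $\xi'=\sum_i \tilde\sigma_i(\xi)$ satisfies $\rho_{\eta^n}(\xi')=\operatorname{T}_{M|L}(\alpha)$; lifting $\Phi_L(\beta)$ to some $\hat\sigma \in \operatorname{Gal}(\Omega|L)$, one checks $(\operatorname{T}_{M|L}(\alpha),\beta)_{L,n}=\hat\sigma(\xi')-\xi'$ independently of the lift and of $\xi'$, since the ambiguities lie in $W_{\rho}^n \subset L$. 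Writing $\hat\sigma\tilde\sigma_i=\tilde\sigma_{\pi(i)}h_i$ with $h_i \in \operatorname{Gal}(\Omega|M)$ and $\pi$ a permutation, the fact that $h_i(\xi)-\xi \in W_{\rho}^n \subset L$ is fixed by $\tilde\sigma_{\pi(i)}$ makes the $\tilde\sigma$-terms cancel, leaving $(\operatorname{T}_{M|L}(\alpha),\beta)_{L,n}=\sum_i (h_i(\xi)-\xi)$. Because each $h_i(\xi)-\xi$ is torsion and fixed by every $h_j$, this sum collapses to $(\prod_i h_i)(\xi)-\xi$, and $\prod_i h_i$ is by construction the transfer of $\hat\sigma$; invoking $\operatorname{Ver}(\Phi_L(\beta))=\Phi_M(\beta)$ identifies it with $\Phi_M(\beta)(\xi)-\xi=(\alpha,\beta)_{M,n}$. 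The real work is exactly here: translating the transfer into the coset bookkeeping above and verifying that the cancellation and the collapse of the sum both rely on torsion values being $\operatorname{Gal}(\Omega|L)$-fixed.
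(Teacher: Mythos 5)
Your proof is correct. The paper itself omits the proof of this proposition, deferring to \cite[\S 3.3]{kolyvagin}, and the argument there runs along exactly the lines you give: parts (i)--(iii), (v), (vi) follow from the facts that the torsion values lie in $W_{\rho}^n\subset L$ and that the reciprocity map is functorial with respect to norms, while part (iv) rests on the transfer (Verlagerung) compatibility $\Phi_M|_{L^{\times}}=\operatorname{Ver}\circ\Phi_L$ unwound through coset representatives precisely as in your computation.
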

We omit the proof of these properties. The interested reader may find a detailed proof in \cite[\S 3.3]{kolyvagin}. As mentioned in the introduction, there exists explicit formulas for the pairing $(~,~)_{L,n}$ which include the logarithm of the Drinfeld module $\rho$. This so-called logarithm was defined by Rosen in \cite[\S 2]{Rosen} as follows.
\begin{lemma} \label{lemmadefrho}
There exists a unique power series $\lambda_{\rho} \in H\{\{\tau\}\}$, called the logarithm of $\rho$, such that $\lambda_{\rho}(X) \equiv X \mod \operatorname{deg~2}$ and $\lambda_{\rho} \rho_a = a \lambda_{\rho}$ for all $a \in \mathcal{O}$. Moreover, we have
\begin{enumerate}[label=(\roman*)]
    \item \label{coefflamda} If $\lambda_{\rho}=\sum_{i \ge 0} c_i \tau^i$, then $ \mu(c_i) \ge -i$ for all $i \ge 0$. Thus the element $\lambda_{\rho}(x)=\sum_{i \ge 0} c_i x^{q^i}$ is well defined in $L$ for any $x \in \mathfrak{p}_L$.
    \item If $x \in \mathfrak{p}_{\Omega}$, then $\lambda_{\rho}(X)=0$ if and only if $x \in V_{\rho}$. Put $W_L=L \cap W_{\rho} \subset \mathfrak{p}_L$. Then the map $\lambda_{\rho}: \mathfrak{p}_L/W_L \longrightarrow  \lambda_{\rho}(\mathfrak{p}_L)$ is an isomorphism of $\mathcal{O}$-modules.
    \item Let $\mathfrak{p}_{\Omega,1}$ denote the set of all the elements $x$ of $\mathfrak{p}_{\Omega}$ such that $\mu(x) > 1/(q-1)$. The logarithm $\lambda_{\rho}$ gives an isomorphism of $\mathcal{O}$-modules from $\mathfrak{p}_{\Omega,1}$, viewed as an $\mathcal{O}$-module under the action (\ref{defactionrho}), to itself, viewed as an $\mathcal{O}$-module under the multiplication in $\Omega$. In particular, if we denote $\mathfrak{p}_{L,1} = \mathfrak{p}_{L} \cap \mathfrak{p}_{\Omega,1}$, the logarithm $\lambda_{\rho}$ also induces an isomorphism from the ideal $\mathfrak{p}_{L,1}$ to itself. This follows from the fact that $\mu(\lambda_{\rho}(x))=\mu(x)$ for all $x \in \mathfrak{p}_{\Omega,1}$. 
\end{enumerate}
\end{lemma}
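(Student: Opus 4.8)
The plan is to build $\lambda_\rho$ from a single functional equation and then propagate it to all of $\mathcal{O}$. Fix a uniformizer $\pi$ of $K$ and write $\rho_\pi=\sum_{j\ge 0}a_j\tau^j$ with $a_0=\pi$ and all $a_j\in\mathcal{O}_H$. Looking for $\lambda_\rho=\sum_{i\ge 0}c_i\tau^i$ with $c_0=1$, I impose $\lambda_\rho\rho_\pi=\pi\lambda_\rho$ and compare coefficients of $\tau^k$ using $\tau^i a=a^{q^i}\tau^i$: the $\tau^0$ identity is automatic, while for $k\ge 1$ one obtains
\[
c_k\,(\pi-\pi^{q^k})=\sum_{i=0}^{k-1}c_i\,a_{k-i}^{q^i}.
\]
As $\mu(\pi-\pi^{q^k})=1$, this determines each $c_k\in H$ uniquely, giving existence and uniqueness of the normalized solution of $\lambda_\rho\rho_\pi=\pi\lambda_\rho$. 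To reach $\lambda_\rho\rho_a=a\lambda_\rho$ for every $a\in\mathcal{O}$ I would first note that the recursion is left-linear in the $\tau^0$-coefficient, so every solution $\nu$ of $\nu\rho_\pi=\pi\nu$ equals $d\lambda_\rho$ with $d$ its $\tau^0$-coefficient; then, setting $\nu:=\lambda_\rho\rho_a$ and using $\rho_a\rho_\pi=\rho_{a\pi}=\rho_\pi\rho_a$, I get $\nu\rho_\pi=\pi\nu$, whence $\nu=d\lambda_\rho$ with $d=c_0a=a$, i.e. $\lambda_\rho\rho_a=a\lambda_\rho$.

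For (i) I induct on $k$: if $\mu(c_i)\ge -i$ for $i<k$, then since $\mu(a_{k-i}^{q^i})\ge 0$ the right-hand side above has valuation $\ge -(k-1)$, so $\mu(c_k)\ge-(k-1)-1=-k$. Hence for $x\in\mathfrak{p}_L$ we have $\mu(c_ix^{q^i})\ge -i+q^i\mu(x)\to\infty$, so $\lambda_\rho(x)$ converges in $L$.

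For (iii) I would show the linear term dominates when $\mu(x)>1/(q-1)$: for $i\ge 1$ the inequality $-i+q^i\mu(x)>\mu(x)$ is $\mu(x)>i/(q^i-1)$, and $i/(q^i-1)\le 1/(q-1)$ with equality only at $i=1$, because $q^i-1=(q-1)(1+q+\cdots+q^{i-1})\ge(q-1)i$. Thus $\mu(\lambda_\rho(x))=\mu(x)$, and since $\lambda_\rho$ is additive this yields injectivity on $\mathfrak{p}_{\Omega,1}$. Surjectivity I would get from the compositional inverse $e_\rho$ of $\lambda_\rho$, which satisfies $\rho_a e_\rho=e_\rho a$; the analogous recursion gives the weaker bound $\mu(c'_i)\ge-(q^i-1)/(q-1)$ on its coefficients, which still forces convergence and valuation-preservation on $\mathfrak{p}_{\Omega,1}$, so $x:=e_\rho(y)$ solves $\lambda_\rho(x)=y$ for $y\in\mathfrak{p}_{\Omega,1}$.

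For (ii) the functional equation gives $\lambda_\rho(\rho_a(x))=a\lambda_\rho(x)$ where both sides converge. If $x\in V_{\rho}$ then $\rho_a(x)=0$ for some $a\ne 0$, forcing $\lambda_\rho(x)=0$. Conversely, if $\lambda_\rho(x)=0$ and $x\ne 0$, then $\mu(\rho_\pi(x))\ge\min(1+\mu(x),q\mu(x))>\mu(x)$, so iterating $\rho_\pi$ drives the valuation past $1/(q-1)$; for large $n$, $\rho_{\pi^n}(x)\in\mathfrak{p}_{\Omega,1}$ with $\lambda_\rho(\rho_{\pi^n}(x))=\pi^n\lambda_\rho(x)=0$, and injectivity on $\mathfrak{p}_{\Omega,1}$ forces $\rho_{\pi^n}(x)=0$, i.e. $x\in V_{\rho}$. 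Since $\ker\lambda_\rho\cap\mathfrak{p}_L=W_L$ and $\lambda_\rho$ is $\mathcal{O}$-linear by the functional equation, the induced map $\mathfrak{p}_L/W_L\to\lambda_\rho(\mathfrak{p}_L)$ is an isomorphism of $\mathcal{O}$-modules. The main obstacle is the surjectivity in (iii): unlike $\lambda_\rho$, the inverse $e_\rho$ has rapidly growing denominators, so one must compute its coefficient valuations precisely and verify that the composition $\lambda_\rho\circ e_\rho$ converges to the identity on $\mathfrak{p}_{\Omega,1}$; the existence bootstrap and the valuation estimates are comparatively routine.
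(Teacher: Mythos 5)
The paper gives no proof of this lemma at all: it simply attributes the construction to Rosen \cite[\S 2]{Rosen}, so there is nothing internal to compare against. Your argument is correct and is the standard self-contained construction: solving $\lambda_\rho\rho_\pi=\pi\lambda_\rho$ coefficientwise (the denominators $\pi-\pi^{q^k}$ all having valuation $1$), bootstrapping to all of $\mathcal{O}$ via the one-dimensionality of the solution space of $\nu\rho_\pi=\pi\nu$ as a function of the $\tau^0$-coefficient, and the induction $\mu(c_k)\ge -k$ are all right, as is the estimate $i/(q^i-1)\le 1/(q-1)$ giving $\mu(\lambda_\rho(x))=\mu(x)$ on $\mathfrak{p}_{\Omega,1}$, the coefficient bound $\mu(c'_i)\ge -(q^i-1)/(q-1)$ for the exponential, and the iteration-of-$\rho_\pi$ argument for the kernel in (ii) (the valuation increases by at least $\min(1,(q-1)\mu(x))>0$ at each step, so finitely many iterations land in $\mathfrak{p}_{\Omega,1}$). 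Two points you leave implicit are worth a sentence each in a full write-up: the pointwise identities $\lambda_\rho(\rho_a(x))=a\lambda_\rho(x)$ and $\lambda_\rho(e_\rho(y))=y$ require rearranging double series, which is licensed because all terms tend to $0$ in a complete nonarchimedean field (and $e_\rho(y)$ lies in the complete subfield $H(y)$, hence in $\Omega$); and $e_\rho$ should be identified as the compositional inverse of $\lambda_\rho$, e.g.\ by noting that $\lambda_\rho e_\rho=\sum d_i\tau^i$ commutes with $\pi$, forcing $d_i(\pi^{q^i}-\pi)=0$ for $i\ge 1$. Neither is a gap.
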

Inspired by \cite{kolyvagin}, we proved in \cite[\S 3]{Papier1} the following explicit formula for $(~,~)_{L,n}$. 
We denote by $\mathfrak{X}_{L,1} \subset L$ the fractional ideal of all elements $y$ such that $\operatorname{T}_{L|K}(xy) \in \mathcal{O}_K$ for all $ x \in \lambda_{\rho}(\mathfrak{p}_{L,1})$. We have
\begin{align*}
    \mathfrak{X}_{L,1} & = \{ y \in L;~~\operatorname{T}_{L|K}(\lambda_{\rho}(\alpha)y) \in \mathcal{O}_K ~~ \forall \alpha \in \mathfrak{p}_{L,1} \} \\
    & = \{  y \in L;~~ \operatorname{T}_{L|K}(\alpha' y) \in \mathcal{O}_K ~~ \forall \alpha' \in \mathfrak{p}_{L,1} \} \numberthis \label{defX_L1} \\
    & = \{  y \in L;~~ \mu(y) \ge -\frac{1}{q-1} - \frac{1}{e(L|K)} -\mu(\mathcal{D}_{L|K})\}.
\end{align*}
\begin{prop} \label{iwasawamappsi}
Suppose that the extension $L|K$ is separable. Then, there exists a unique map $\Psi_{L,v_n}: L^{\times} \longrightarrow \mathfrak{X}_{L,1}/\eta^n\mathfrak{X}_{L,1}$ 
such that 
\begin{equation} \label{eqiwasawamappsi}
    (\alpha,\beta)_{L,n}=  T_{L|K}(\lambda_{\rho}(\alpha)\Psi_{L,v_n}(\beta)) \cdot_{\rho} v_n 
\end{equation}
for all $\alpha \in \mathfrak{p}_{L,1}$ and $\beta \in L^{\times}$. Furthermore, $\Psi_{L,v_n}$ is a continuous group homomorphism.
\end{prop}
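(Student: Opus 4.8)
The plan is to fix $\beta \in L^{\times}$ and first linearize the pairing in its first argument, then invoke trace duality. Since $W_{\rho}^n \cong \mathcal{O}_K/\mathfrak{p}_K^{nm_0} = \mathcal{O}_K/\eta^n\mathcal{O}_K$ is free of rank one over $\mathcal{O}_K/\eta^n\mathcal{O}_K$ with generator $v_n$, I would write $(\alpha,\beta)_{L,n} = c_{\beta}(\alpha)\cdot_{\rho}v_n$ for a unique $c_{\beta}(\alpha) \in \mathcal{O}_K/\eta^n\mathcal{O}_K$. By Proposition \ref{PropoertiesPairing1}(i) the resulting map $c_{\beta}\colon \mathfrak{p}_{L,1} \to \mathcal{O}_K/\eta^n\mathcal{O}_K$ is $\mathcal{O}_K$-linear for the action $\cdot_{\rho}$ on the source. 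Composing with the logarithm isomorphism of Lemma \ref{lemmadefrho}(iii), which identifies $(\mathfrak{p}_{L,1},\cdot_{\rho})$ with $(\mathfrak{p}_{L,1},\times)$, yields an $\mathcal{O}_K$-linear functional $\ell_{\beta}\colon \mathfrak{p}_{L,1} \to \mathcal{O}_K/\eta^n\mathcal{O}_K$ with $c_{\beta} = \ell_{\beta}\circ\lambda_{\rho}$. Because its target is killed by $\eta^n$, the functional $\ell_{\beta}$ factors through $\mathfrak{p}_{L,1}/\eta^n\mathfrak{p}_{L,1}$.

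The heart of the argument, and the step I expect to be the main obstacle, is a trace-duality statement: the pairing $(x,y)\mapsto T_{L|K}(xy)$, which maps $\mathfrak{p}_{L,1}\times\mathfrak{X}_{L,1}$ into $\mathcal{O}_K$ by the second description of $\mathfrak{X}_{L,1}$ in \eqref{defX_L1}, should induce a \emph{perfect} $\mathcal{O}_K/\eta^n\mathcal{O}_K$-bilinear pairing
\[
\mathfrak{p}_{L,1}/\eta^n\mathfrak{p}_{L,1}\ \times\ \mathfrak{X}_{L,1}/\eta^n\mathfrak{X}_{L,1}\ \longrightarrow\ \mathcal{O}_K/\eta^n\mathcal{O}_K .
\]
Since $L|K$ is separable the trace form is non-degenerate, so I would choose an $\mathcal{O}_K$-basis of the fractional ideal $\mathfrak{p}_{L,1}$ together with its trace-dual basis; this exhibits $\mathfrak{X}_{L,1}$ as precisely the trace dual $\mathfrak{p}_{L,1}^{\ast}$ and makes the Gram matrix the identity, so the pairing is perfect already over $\mathcal{O}_K$ and remains perfect after reduction modulo $\eta^n$. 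Granting this, $\ell_{\beta}$ corresponds to a unique element $\Psi_{L,v_n}(\beta)\in\mathfrak{X}_{L,1}/\eta^n\mathfrak{X}_{L,1}$ with $\ell_{\beta}(x)=T_{L|K}(x\,\Psi_{L,v_n}(\beta))$, which upon substituting $x=\lambda_{\rho}(\alpha)$ gives exactly \eqref{eqiwasawamappsi}; uniqueness of $\Psi_{L,v_n}(\beta)$ is the non-degeneracy of the pairing on the right-hand factor.

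It then remains to check that $\Psi_{L,v_n}$ is a continuous homomorphism. The homomorphism property follows formally: bilinearity of $(\,\cdot\,,\,\cdot\,)_{L,n}$ in the second variable (Proposition \ref{PropoertiesPairing1}(i)) gives $c_{\beta_1\beta_2}=c_{\beta_1}+c_{\beta_2}$, hence $\ell_{\beta_1\beta_2}=\ell_{\beta_1}+\ell_{\beta_2}$, and the uniqueness just established forces $\Psi_{L,v_n}(\beta_1\beta_2)=\Psi_{L,v_n}(\beta_1)+\Psi_{L,v_n}(\beta_2)$. For continuity I would use that both the target group and the group $\mathfrak{p}_{L,1}/\eta^n\mathfrak{p}_{L,1}$ are finite: fixing representatives $\alpha_1,\dots,\alpha_r$ of a generating set, perfectness shows that $\Psi_{L,v_n}(\beta)$ is determined by the finitely many values $(\alpha_i,\beta)_{L,n}$. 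Each map $\beta\mapsto(\alpha_i,\beta)_{L,n}$ has open kernel — by Proposition \ref{PropoertiesPairing1}(ii) this kernel is the norm subgroup $\operatorname{N}_{L(\xi_i)|L}(L(\xi_i)^{\times})$, which is open of finite index, or alternatively one invokes the continuity of the norm residue map $\Phi_L$. Hence $\ker\Psi_{L,v_n}$ is a finite intersection of such open kernels, so it is open, and $\Psi_{L,v_n}$ is continuous, completing the proof.
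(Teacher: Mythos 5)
Your proposal is correct and takes essentially the same route as the paper's source for this statement (the paper defers the proof to \cite[\S 3]{Papier1}, following Kolyvagin): linearize the first variable via Proposition \ref{PropoertiesPairing1}(i) and the logarithm isomorphism of Lemma \ref{lemmadefrho}(iii), then represent the resulting $\mathcal{O}_K/\eta^n\mathcal{O}_K$-valued functional by a unique class in $\mathfrak{X}_{L,1}/\eta^n\mathfrak{X}_{L,1}$ using that $\mathfrak{X}_{L,1}$ is precisely the trace dual of the free $\mathcal{O}_K$-lattice $\mathfrak{p}_{L,1}$, which is exactly where separability of $L|K$ is used. The homomorphism property from bilinearity and the continuity argument via the open norm subgroups of Proposition \ref{PropoertiesPairing1}(ii) are likewise the standard steps here.
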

\begin{remark}
\begin{enumerate} [label=(\roman*)]
    \item In \eqref{eqiwasawamappsi}, we view $\Psi_{L,v_n}(\beta)$ as an element of $\mathfrak{X}_{L,1}$. It it easy to see that for any $\alpha \in \mathfrak{p}_{L,1}$, the value of $ T_{L|K}(\lambda_{\rho}(\alpha)\Psi_{L,v_n}(\beta)) \cdot_{\rho} v_n$ does not depend on the choice of the representative of the class of $\Psi_{L,v_n}(\beta)$ in $\mathfrak{X}_{L,1}/\eta^n\mathfrak{X}_{L,1}$.
    \item Let $v'_n$ be another generator of $W_{\rho}^n$, then $v'_n = \rho_u(v_n)$ for a unit $u$ of $K$. We have
    \begin{equation}
        \Psi_{L,v_n}=u \Psi_{L,v'_n}.
    \end{equation}
\end{enumerate}
    
\end{remark}
Exactly as in \cite[\S 3.5]{kolyvagin}, our $\Psi_{L,v_n}$ satisfies the properties $\varphi_1,~ \varphi_2,~ \varphi_3,~ \varphi_4,~ \varphi_5$ and $\varphi_6$ of loc. cit. The equality \eqref{eqiwasawamappsi} gives an expression of the pairing $(~,~)_{L,n}$ in terms of the trace of $L|K$, the logarithm of $\rho$, and the map $\Psi_{L,v_n}$. However, we do not have an explicit expression of $\Psi_{L,v_n}$. Therefore, we will use $\Psi_{L,v_n}$ to construct a derivation $\operatorname{\Bar{D}}_{L,v_n}$ (see \S \ref{sectionderivationD} below), which will help us prove an explicit formula for $(~,~)_{L,n}$. In fact, we will see that $\operatorname{\Bar{D}}_{L,v_n}$ is determined by its value at a prime $\pi_L$, which is, by its turn, determined by invariants from representation theory.
\begin{prop} \label{existencer}
There exists a unique power series $r=r_n \in \mathcal{O}_H\{\{\tau\}\}$ such that $$ \prod_{\omega \in W_{\rho}^n}(X-\omega)=r \circ \rho_{\eta^n}(X) .$$ 
Furthermore, the power series $r$ is invertible in $\mathcal{O}_H\{\{\tau\}\}$ and satisfies

$$(x,r(x))_{L,n}=0, ~~ \forall x \in \mathfrak{p}_L  .$$
\end{prop}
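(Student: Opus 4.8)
The plan is to split the statement into three tasks: producing the twisted polynomial $P(X):=\prod_{\omega\in W_{\rho}^n}(X-\omega)$ as an integral element of $\mathcal{O}_H\{\{\tau\}\}$, extracting $r$ from it by a division argument and checking invertibility, and finally deducing the norm vanishing from Proposition~\ref{PropoertiesPairing1}(ii). First I would observe that $W_{\rho}^n\subset\mathfrak{p}_{\Bar{\Omega}}$ is a finite $\mathbb{F}_q$-subspace of cardinality $q^{nm_0}$, so by the classical theory of additive polynomials $P$ is $\mathbb{F}_q$-linear; in the variable $\tau$ it becomes a $\tau$-monic twisted polynomial $P=\tau^{nm_0}+\sum_{i<nm_0}b_i\tau^i$ of $\tau$-degree $nm_0$. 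Its coefficients are symmetric functions of the $\omega$, hence fixed by $\operatorname{Gal}(\Omega|H)$ and integral, so $P\in\mathcal{O}_H\{\{\tau\}\}$; moreover every $\omega$ reduces to $0$ modulo $\mathfrak{p}_{\Omega}$, so $\Bar P=\tau^{nm_0}$, i.e. $b_i\in\mathfrak{p}_H$ for $i<nm_0$ and $P$ is \emph{distinguished}.

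Next, because $P$ is distinguished, Weierstrass right-division by $P$ applies in $\mathcal{O}_H\{\{\tau\}\}$: I would write $\rho_{\eta^n}=s\circ P+t$ with $s\in\mathcal{O}_H\{\{\tau\}\}$ and $t$ a twisted polynomial of $\tau$-degree $<nm_0$. Evaluating the associated additive polynomials at each $\omega\in W_{\rho}^n$, where both $\rho_{\eta^n}$ and $P$ vanish, forces $t$ to vanish at all $q^{nm_0}$ points of $W_{\rho}^n$ while having $X$-degree $<q^{nm_0}$; hence $t=0$ and $\rho_{\eta^n}=s\circ P$. Reducing this identity modulo $\mathfrak{p}_H$ and using $\overline{\rho_{\eta^n}}=\Bar u\,\tau^{nm_0}$ (a consequence of height one and stable reduction) together with $\Bar P=\tau^{nm_0}$ gives $\Bar s=\Bar u$, a nonzero constant of the residue field of $H$; thus $s$ has unit constant term and is invertible. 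Setting $r:=s^{-1}$ yields $P=r\circ\rho_{\eta^n}$ with $r$ a unit, and uniqueness follows since $\mathcal{O}_H\{\{\tau\}\}$ is a domain. I expect this division and invertibility step to be the main obstacle, since it relies on the noncommutative Weierstrass division and on careful bookkeeping of integrality and of the reductions modulo $\mathfrak{p}_H$.

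Finally, for $x\in\mathfrak{p}_L$ with $x\neq 0$ I would pick $\xi\in L^{ab}$ with $\rho_{\eta^n}(\xi)=x$ (via \cite[Lemma 2.1]{Papier1}) and invoke the identity just proved: $r(x)=r(\rho_{\eta^n}(\xi))=P(\xi)=\prod_{\omega\in W_{\rho}^n}(\xi-\omega)$. Since $W_{\rho}^n\subset E_{\rho}^n\subseteq L$, the equation $\rho_{\eta^n}(Y)=x$ is separable with all its roots $\xi+\omega$ lying in $L(\xi)$, so $L(\xi)|L$ is Galois with $\operatorname{Gal}(L(\xi)|L)\hookrightarrow W_{\rho}^n$, $\sigma\mapsto\omega_\sigma$, whose image is a subgroup $H_0$. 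Choosing coset representatives $W_{\rho}^n=\bigsqcup_j(\omega_j+H_0)$ and noting that each $\xi+\omega_j$ again solves $\rho_{\eta^n}(Y)=x$, its $L$-conjugates are exactly $\{\xi+\omega_j+\omega:\omega\in H_0\}$; regrouping the product over cosets gives $r(x)=\prod_j\operatorname{N}_{L(\xi)|L}(\xi+\omega_j)=\operatorname{N}_{L(\xi)|L}\bigl(\prod_j(\xi+\omega_j)\bigr)$, a norm from $L(\xi)$. By Proposition~\ref{PropoertiesPairing1}(ii) this forces $(x,r(x))_{L,n}=0$, the case $x=0$ giving $r(x)=0\notin L^{\times}$ being excluded.
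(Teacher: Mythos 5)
The paper itself does not reprove this statement (it simply cites \cite[Proposition 4.2]{Papier1}), and your argument is the standard Kolyvagin-style proof that such a reference follows: recognize $\prod_{\omega\in W_{\rho}^n}(X-\omega)$ as a distinguished $\tau$-monic twisted polynomial $P$ of $\tau$-degree $nm_0$ with coefficients in $\mathcal{O}_H$ (Galois-invariance plus $\omega\in\mathfrak{p}_{\bar\Omega}$), divide $\rho_{\eta^n}$ by $P$ via twisted Weierstrass division, kill the remainder by counting roots of an additive polynomial of $X$-degree $<q^{nm_0}$, and obtain the norm statement from Proposition~\ref{PropoertiesPairing1}(ii) by decomposing $W_{\rho}^n$ into cosets of the image of $\operatorname{Gal}(L(\xi)|L)$ --- a point you handle correctly, since $L(\xi)|L$ may well have degree smaller than $q^{nm_0}$. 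The proof is correct. One intermediate claim is stated too strongly: stable reduction of height one gives only that the \emph{lowest} term of $\overline{\rho_{\eta^n}}$ is $\bar u\,\tau^{nm_0}$ with $\bar u\neq 0$, not that $\overline{\rho_{\eta^n}}=\bar u\,\tau^{nm_0}$, so $\bar s$ need not be the constant $\bar u$; but comparing the coefficients of $\tau^{nm_0}$ in $\bar s\cdot\tau^{nm_0}=\overline{\rho_{\eta^n}}$ still shows that the constant term of $s$ is a unit, which is all that the invertibility of $s$, hence of $r=s^{-1}$, requires.
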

\begin{proof}
    See \cite[Proposition 4.3]{Papier1} and \cite[Lemma 17]{Ignazio}.
\end{proof}
\begin{lemma} \label{condrhoprime}
Let $r=r_n$ be the power series defined in Proposition \ref{existencer}. Let $\rho'$ be  defined by 
\begin{equation} \label{defrhoprime}
    \rho'_a= r \circ \rho_a \circ r^{-1}
\end{equation}
for all $a \in \mathcal{O}$. Then $\rho'$ is a formal Drinfeld module having a stable reduction of height 1, and we have 
\begin{equation} \label{propxx}
    (x,x)_{\rho',L,n}=0~~\text{for all  } x \in \mathfrak{p}_L.
\end{equation}
\end{lemma}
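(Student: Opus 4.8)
The plan is to obtain both assertions as formal consequences of Proposition \ref{existencer} --- which provides that $r=r_n$ is invertible in $\mathcal{O}_H\{\{\tau\}\}$ and that $(x,r(x))_{\rho,L,n}=0$ for all $x\in\mathfrak{p}_L$ --- combined with the functoriality recorded in property (vi) of Proposition \ref{PropoertiesPairing1}.

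For the first claim, I would observe that $\rho'$ is an \emph{isomorphic} copy of $\rho$ in the sense of Proposition \ref{PropoertiesPairing1}(vi): taking $t=r^{-1}$, which lies in $\mathcal{O}_H\{\{\tau\}\}$ because $r$ is invertible, we have $\rho'_a=t^{-1}\circ\rho_a\circ t$. That $a\mapsto\rho'_a$ is a ring homomorphism is immediate from conjugation, since $\rho'_{a+b}=\rho'_a+\rho'_b$ and $\rho'_{ab}=r\rho_a\rho_b r^{-1}=\rho'_a\rho'_b$. Writing $r=\sum_{i\ge0}r_i\tau^i$ with $r_0\in\mathcal{O}_H^{\times}$, the $\tau^0$-coefficient of a product in $\mathcal{O}_H\{\{\tau\}\}$ is the product of the $\tau^0$-coefficients, so the constant coefficient of $\rho'_a$ equals $r_0\,a\,r_0^{-1}=a$; hence the $\tau^0$-coefficient of $\rho'_a$ is $a$ and all its coefficients lie in $\mathcal{O}_H$. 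Finally, reducing modulo $\mathfrak{p}_H$ gives $\bar\rho'_a=\bar r\,\bar\rho_a\,\bar r^{-1}$ with $\bar r$ invertible in $\mathbb{F}_{q_H}\{\{\tau\}\}$, so $\bar\rho'$ is isomorphic to $\bar\rho$; as the height is an isomorphism invariant of the reduced Drinfeld module, $\rho'$ has stable reduction of height one.

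For \eqref{propxx}, I would apply property (vi) of Proposition \ref{PropoertiesPairing1} with $t=r^{-1}$, so that $t^{-1}=r$, to get $(\alpha,\beta)_{\rho',L,n}=r\bigl((r^{-1}(\alpha),\beta)_{\rho,L,n}\bigr)$. Setting $\alpha=\beta=x\in\mathfrak{p}_L$ and putting $y=r^{-1}(x)$ --- which again lies in $\mathfrak{p}_L$ since $r^{-1}=\sum_i s_i X^{q^i}$ has coefficients $s_i\in\mathcal{O}_H\subset\mathcal{O}_L$ and no constant term, hence maps $\mathfrak{p}_L$ into itself --- we have $x=r(y)$, whence $(x,x)_{\rho',L,n}=r\bigl((y,r(y))_{\rho,L,n}\bigr)=r(0)=0$ by the vanishing in Proposition \ref{existencer}. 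The degenerate value $x=0$ is trivial.

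The substantive input is entirely contained in Proposition \ref{existencer}; the present lemma is a formal deduction. The step demanding the most care is the height-one claim in the first paragraph: one must argue inside the non-commutative ring $\mathcal{O}_H\{\{\tau\}\}$ and use that $\bar r$ is a \emph{unit} (equivalently, has unit $\tau^0$-coefficient) in order to know that reduction modulo $\mathfrak{p}_H$ commutes with conjugation by $r$ and that the resulting reduced module is genuinely isomorphic to $\bar\rho$, so that its height is preserved. Once this is in place, the vanishing of the self-pairing follows immediately from the substitution $y=r^{-1}(x)$.
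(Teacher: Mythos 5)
Your proof is correct. The paper itself gives no argument here (it only cites \cite[Lemma 4.3]{Papier1}), but your deduction --- invertibility of $r$ giving a unit constant term and hence an isomorphic reduced module of the same height, plus Proposition \ref{PropoertiesPairing1}(vi) with $t=r^{-1}$ turning $(x,x)_{\rho',L,n}$ into $r\bigl((y,r(y))_{\rho,L,n}\bigr)=0$ via Proposition \ref{existencer} --- is precisely the formal consequence these two surrounding results are set up to yield, and the details you flag (that $r^{-1}$ has coefficients in $\mathcal{O}_H$ and no constant term, so it preserves $\mathfrak{p}_L$, and that reduction mod $\mathfrak{p}_H$ commutes with conjugation by the unit $r$) all check out.
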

\begin{proof}
    See \cite[Lemma 4.4]{Papier1}.
\end{proof}
As we will see in the sequel, it will be easier to deal with formal Drinfeld modules satisfying the property \eqref{propxx}. Lemma \ref{condrhoprime} will ensure that, starting any formal Drinfeld module having a stable reduction of height 1, we will be able to reach, by isomorphism, a formal Drinfeld module having a stable reduction of height 1, satisfying \eqref{propxx}.
\section{Derivations}

\subsection{Recall on Derivations}
In this paragraph, we give a brief recall on derivations and their main properties that will be useful for us in the sequel. Let $R$ be a commutative ring with unit, and $O$ be a subring of $R$. If $W$ is an $R$-module, a map $D: R \to W$ is said to be an $O$-derivation of $R$ into $W$ if it is $O$-linear and satisfies the Leibniz rule
\begin{equation}
    D(xy) = x D(y) + y D(x) ~~ \forall x, y \in R.
\end{equation}
In particular, a derivation $D: R \to W$ also fulfills the following:
\begin{enumerate} [label=(\roman*)]
    \item $D(x+y) = D(x) + D(y) ~~ \forall x, y \in R,$
    \item $D(a)=0 ~~ \forall a \in O$.
\end{enumerate}
The set of all such derivations $\operatorname{D}_{O}(R,W)$ is an $R$-module, where $aD$ is defined by $(aD)(x)=aD(x)$ for all $a,x \in R$. We will show that there exists a universal derivation, in other words, an $R$-module $\Omega_{O}(R)$, and a derivation 
\begin{equation}
    d:R \to \Omega_{O}(R)
\end{equation}
such that for every derivation $D:R \to W$, there exists a unique homomrphism of $R$-modules $f: \Omega_{O}(R) \to W$ such that the diagram
\begin{center}
\begin{tikzcd}[column sep=small]
R \arrow{r}{d}  \arrow[swap]{rd}{D} 
  & \Omega_{O}(R) \arrow[dashed]{d}{\exists ! f} \\
    & W
\end{tikzcd}    
\end{center}
commuts.
Let $\mathcal{R}$ be the direct sum of the modules $(R)_{x \in R}$.Then $\mathcal{R}$ is the submodule of $\prod_{x \in R}R$ which consists of families $(a_x)_{x \in R}$ having finite support. For each element $x \in R$, we associate a symbol $\operatorname{d}x$,  so that an element $(a_x)_{x \in R}$ in $\mathcal{R}$ can be written as a finite sum $\sum_{x \in R}a_x \operatorname{d}x$. Here, the symbols $\operatorname{d}x$ are supposed to be distinct for distinct elements of $R$. Consider the submodule of $\mathcal{R}$ generated by the set 
\begin{equation} \label{setderiv}
    \{ \operatorname{d}(xy) - y\operatorname{d}x - x \operatorname{d}y,~~ \operatorname{d}(x+y) -\operatorname{d}x -\operatorname{d}y,~~ \operatorname{d}a; ~~ x, y \in R, ~ a \in O \}.
\end{equation}
The quotient of $\mathcal{R}$ by this submodule , which we denote by $\Omega_O(R)$, together with the derivation $\operatorname{d}: R \to \Omega_O(R)$ that sends $x$ to the class of $\operatorname{d}x$ in $\Omega_O(R)$, form the universal derivation we are looking for. Indeed, let $W$ be an $R$-module and $D:R \to W$ be a derivation, and consider the unique homomorphism of $R$-modules from $\mathcal{R}$ to $W$ that maps $\operatorname{d}a$ to $D(a)$. This homomorphism is trivial on the submodule of $\mathcal{R}$ generated by the set \eqref{setderiv}, thus it factors through $\Omega_O(R)$, whence the universal property. We call $\Omega_O(R)$ the module of differentials of $R$ over $O$.

The universal derivation yields an isomorphism of $R$-modules
\begin{equation}
    \operatorname{D}_{O}(R,W) \simeq \operatorname{Hom}_R(\Omega_O(R),W).
\end{equation}

Let $M$ be a local field and $N$ be a finite separable extension of $M$. We denote by $\mathcal{D}(N|M)$ the different of $N|M$. In the special case where $R=\mathcal{O}_N$ and $O=\mathcal{O}_M$, we have the following results.

\begin{prop} \label{structureOmega}
There exists an isomorphism of $\mathcal{O}_N$-modules
\begin{equation}
    \Omega_{\mathcal{O}_M}(\mathcal{O}_N) \simeq \mathcal{O}_N / \mathcal{D}(N|M).
\end{equation}
Furthermore, if $\pi_N$ is a prime of $N$, then $d\pi_N$ is a generator of $\Omega_{\mathcal{O}_M}(\mathcal{O}_N)$.
\end{prop}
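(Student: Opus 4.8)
The plan is to establish the isomorphism $\Omega_{\mathcal{O}_M}(\mathcal{O}_N) \simeq \mathcal{O}_N/\mathcal{D}(N|M)$ by exploiting the structure of $N$ as a separable extension together with the universal property developed above. Since $N|M$ is finite separable, the primitive element theorem gives $N = M(\theta)$ for some $\theta$, and after scaling I may assume $\theta \in \mathcal{O}_N$ generates $\mathcal{O}_N$ as an $\mathcal{O}_M$-algebra when the extension is, say, monogenic over the valuation rings (which holds for local fields: $\mathcal{O}_N = \mathcal{O}_M[\pi_N]$ where $\pi_N$ is a prime of $N$). Let me look at things forward in terms of the structure of the paper to come...

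Let me reconsider. Given the paper wants to use $d\pi_N$ as a generator, **first I would** show $\mathcal{O}_N = \mathcal{O}_M[\pi_N]$ for a prime $\pi_N$ of $N$. This is the standard fact that the ring of integers of a local field is monogenic over the base — it follows because the residue extension is separable (automatic here, as residue fields of local fields are perfect or the extension is separable) so one can combine a lift of a residue-field generator with a uniformizer, and in the totally ramified or unramified pieces a single element suffices. **Next I would** let $g(X) \in \mathcal{O}_M[X]$ be the minimal polynomial of $\pi_N$, so that $\mathcal{O}_N \cong \mathcal{O}_M[X]/(g)$. For a quotient ring of this form, the module of Kähler differentials is computed by the conormal/Jacobian presentation: $\Omega_{\mathcal{O}_M}(\mathcal{O}_M[X]/(g)) \cong \mathcal{O}_N\,dX \big/ (g'(X)\,dX) \cong \mathcal{O}_N/(g'(\pi_N))$, where the relation $g(\pi_N)=0$ forces $dg(\pi_N) = g'(\pi_N)\,d\pi_N = 0$. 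This presentation shows simultaneously that $d\pi_N$ generates $\Omega_{\mathcal{O}_M}(\mathcal{O}_N)$ and that the annihilator of the generator is the ideal $(g'(\pi_N))$.

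**The key identification** is then that $g'(\pi_N)$ generates the different $\mathcal{D}(N|M)$. This is precisely the classical description of the different for a monogenic extension: when $\mathcal{O}_N = \mathcal{O}_M[\pi_N]$ with minimal polynomial $g$, one has $\mathcal{D}(N|M) = (g'(\pi_N))$. Combining this with the previous step yields
\begin{equation*}
\Omega_{\mathcal{O}_M}(\mathcal{O}_N) \cong \mathcal{O}_N/(g'(\pi_N)) = \mathcal{O}_N/\mathcal{D}(N|M),
\end{equation*}
an isomorphism of $\mathcal{O}_N$-modules sending $d\pi_N$ to the class of $1$, which is manifestly a generator. This closes the proof.

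**The main obstacle** I anticipate is the monogenicity step — establishing $\mathcal{O}_N = \mathcal{O}_M[\pi_N]$ rather than merely $\mathcal{O}_N = \mathcal{O}_M[\theta]$ for some less explicit $\theta$. In the equal-characteristic local fields relevant here (where $K$ has characteristic $p$ and residue fields are finite, hence perfect), the residue extension is separable, so the standard lifting argument goes through and a uniformizer $\pi_N$ does generate; but one must be careful to invoke separability of $N|M$ to guarantee $g'(\pi_N) \neq 0$, so that the different is a genuine nonzero ideal and the quotient $\mathcal{O}_N/\mathcal{D}(N|M)$ is the expected torsion module. The computation of the Jacobian presentation and the identification with the different are both routine once monogenicity and separability are in hand, so I would state those as the content to verify and cite the classical different formula for the last identification.
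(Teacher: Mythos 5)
There is a genuine gap in the monogenicity step. Your argument rests on the claim that $\mathcal{O}_N = \mathcal{O}_M[\pi_N]$ for a uniformizer $\pi_N$ of $N$, but this fails whenever the residue field extension is nontrivial: every element of $\mathcal{O}_M[\pi_N]$ reduces modulo $\mathfrak{p}_N$ to an element of the residue field of $M$, so $\mathcal{O}_M[\pi_N]$ surjects onto $k_M$ but not onto $k_N$ unless $N|M$ is totally ramified. The true statement (Serre, \emph{Corps Locaux}, III) is that $\mathcal{O}_N = \mathcal{O}_M[\theta]$ for \emph{some} $\theta$, obtained by combining a lift of a residue-field generator with a uniformizer --- and that $\theta$ is in general a unit plus a uniformizer, not a uniformizer. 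With such a $\theta$ your Jacobian presentation and the classical formula $\mathcal{D}(N|M) = (g'(\theta))$ do give the isomorphism $\Omega_{\mathcal{O}_M}(\mathcal{O}_N) \simeq \mathcal{O}_N/\mathcal{D}(N|M)$ with generator $d\theta$, so the first assertion of the proposition survives; but the second assertion, that $d\pi_N$ generates, does not follow from your argument as written.

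The standard repair, which is what Kolyvagin's proof (cited by the paper) does and which is consistent with the paper's later use of power series over the inertia field $\tilde{N}$, is to factor through the maximal unramified subextension $\tilde{N}$ of $N|M$. Since $\tilde{N}|M$ is unramified with separable residue extension, $\mathcal{O}_{\tilde{N}} = \mathcal{O}_M[\zeta]$ with $h'(\zeta)$ a unit, so $\Omega_{\mathcal{O}_M}(\mathcal{O}_{\tilde{N}}) = 0$; the cotangent exact sequence for $\mathcal{O}_M \to \mathcal{O}_{\tilde{N}} \to \mathcal{O}_N$ then identifies $\Omega_{\mathcal{O}_M}(\mathcal{O}_N)$ with $\Omega_{\mathcal{O}_{\tilde{N}}}(\mathcal{O}_N)$. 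Now $N|\tilde{N}$ \emph{is} totally ramified, so $\mathcal{O}_N = \mathcal{O}_{\tilde{N}}[\pi_N]$ with $\pi_N$ a root of an Eisenstein polynomial $g$, and your Jacobian computation applies verbatim to give $\Omega_{\mathcal{O}_{\tilde{N}}}(\mathcal{O}_N) \cong \mathcal{O}_N/(g'(\pi_N))$ generated by $d\pi_N$; finally $(g'(\pi_N)) = \mathcal{D}(N|\tilde{N}) = \mathcal{D}(N|M)$ by multiplicativity of the different and $\mathcal{D}(\tilde{N}|M) = (1)$. The rest of your outline (separability guaranteeing $g' \neq 0$, the identification of the annihilator with the different) is correct once this detour is inserted.
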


\begin{proof}
    Kolyvagin proved this proposition in the case of zero characteristic \cite[Proposition 5.1]{kolyvagin}. His proof is suitable for our case.
\end{proof}
\begin{corollary} \label{corostructureOmega}
Let $W$ be an $\mathcal{O}_N$-module and $\pi_N$ be a prime of $N$. Let 
\begin{equation}
    S:=\{x \in W,~~ax=0~~\forall a \in \mathcal{D}(N|M)\}
\end{equation}
be the $\mathcal{D}(N|M)$-torsion submodule of $W$. Then, the map 
\begin{align} \label{isomderiv}
    \operatorname{D}_{\mathcal{O}_M}(\mathcal{O}_N,W) & \to S \\
    D & \mapsto D(\pi_N) \nonumber
\end{align}
is an isomorphism of $\mathcal{O}_N$-modules. 
\end{corollary}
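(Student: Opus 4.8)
The plan is to reduce everything to the universal property of the module of differentials recorded above, combined with the explicit structure given by Proposition \ref{structureOmega}. Recall that the universal derivation yields a natural isomorphism of $\mathcal{O}_N$-modules $\operatorname{D}_{\mathcal{O}_M}(\mathcal{O}_N, W) \simeq \operatorname{Hom}_{\mathcal{O}_N}(\Omega_{\mathcal{O}_M}(\mathcal{O}_N), W)$, under which a derivation $D$ corresponds to the unique $\mathcal{O}_N$-linear map $f_D \colon \Omega_{\mathcal{O}_M}(\mathcal{O}_N) \to W$ with $f_D \circ d = D$; in particular $f_D(d\pi_N) = D(\pi_N)$. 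By Proposition \ref{structureOmega} there is an isomorphism $\Omega_{\mathcal{O}_M}(\mathcal{O}_N) \simeq \mathcal{O}_N/\mathcal{D}(N|M)$ carrying the generator $d\pi_N$ to the class $\overline{1}$, so the target Hom-module may be rewritten as $\operatorname{Hom}_{\mathcal{O}_N}(\mathcal{O}_N/\mathcal{D}(N|M), W)$.

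The second step is the standard evaluation of $\operatorname{Hom}$ out of a cyclic module. First I would observe that an $\mathcal{O}_N$-linear map $f \colon \mathcal{O}_N/\mathcal{D}(N|M) \to W$ is determined by the single element $f(\overline{1}) \in W$, and that $f(\overline{1})$ can be prescribed to be any $x \in W$ subject to the one constraint that $a x = 0$ for every $a \in \mathcal{D}(N|M)$, since $\overline{1}$ has annihilator exactly $\mathcal{D}(N|M)$. This constraint is precisely membership in the torsion submodule $S$, so evaluation at $\overline{1}$ furnishes an isomorphism of $\mathcal{O}_N$-modules $\operatorname{Hom}_{\mathcal{O}_N}(\mathcal{O}_N/\mathcal{D}(N|M), W) \simeq S$.

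Finally I would chain the three isomorphisms and track the generator so as to identify the composite explicitly. A derivation $D$ is sent to $f_D$, then transported along Proposition \ref{structureOmega} to a map on $\mathcal{O}_N/\mathcal{D}(N|M)$, and then to its value at the class corresponding to $d\pi_N$, namely $f_D(d\pi_N) = D(\pi_N)$. Hence the composite is exactly $D \mapsto D(\pi_N)$, as asserted, and it is an isomorphism of $\mathcal{O}_N$-modules because each constituent is. I do not expect a genuine obstacle here; the only points that need care are verifying that the map indeed lands in $S$ — which follows from $a\, D(\pi_N) = f_D(a\, d\pi_N) = 0$ for $a \in \mathcal{D}(N|M)$, using that $d\pi_N$ is annihilated by $\mathcal{D}(N|M)$ in $\Omega_{\mathcal{O}_M}(\mathcal{O}_N)$ — and keeping the $\mathcal{O}_N$-linear structures consistent throughout the identifications supplied by Proposition \ref{structureOmega}.
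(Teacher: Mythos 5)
Your proof is correct and is essentially the argument the paper intends: the paper simply defers to Kolyvagin's Corollary 5.2, whose proof is exactly this composition of the universal-property isomorphism $\operatorname{D}_{\mathcal{O}_M}(\mathcal{O}_N,W)\simeq\operatorname{Hom}_{\mathcal{O}_N}(\Omega_{\mathcal{O}_M}(\mathcal{O}_N),W)$ with Proposition \ref{structureOmega} and the evaluation of $\operatorname{Hom}$ out of the cyclic module $\mathcal{O}_N/\mathcal{D}(N|M)$ at its generator $d\pi_N$. Your tracking of the generator to see that the composite is $D\mapsto D(\pi_N)$, and your check that the image lands in $S$, are exactly the points that need verifying.
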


\begin{proof}
   The proof of \cite[Corollary 5.2]{kolyvagin} is convenient for our case as well.
\end{proof}
\begin{remark}
With the notations of Corollary \ref{corostructureOmega}, the inverse homomorphism of \eqref{isomderiv} associates to an element $x \in S$ a derivation $\operatorname{D}_x$ satisfying
\begin{equation}
    D_x(t(\pi_N)) = t'(\pi_N)x
\end{equation}
for all $t \in \mathcal{O}_{\tilde{N}}[[X]]$, where $\tilde{N}$ is the inertia field of $N|M$. This follows from the fact that a derivation in $\operatorname{D}_{\mathcal{O}_M}(\mathcal{O}_N,W)$ is continuous for the discrete topology on $W$.
\end{remark}

\subsection{The derivation $\operatorname{\Bar{D}}_{L,v_n}$} \label{sectionderivationD}
In this section, we assume that $L|K$ is a separable extension. We define the map 
$\operatorname{D}_{L,v_n}: \mathcal{O}_L \longrightarrow \mathfrak{X}_{L,1} / \eta^n \mathfrak{X}_{L,1}$ by $\operatorname{D}_{L,v_n}(0)=0$ and
$D_{L,v_n}(\alpha) = \alpha \Psi_{L,v_n}(\alpha)$ for $\alpha \in \mathcal{O}_L \setminus \{0\}$, where $\Psi_{L,v_n}$ is the homomorphism defined in  \eqref{iwasawamappsi}. In this section we will prove that $\operatorname{D}_{L,v_n}$, reduced modulo a convenient submodule of $\mathfrak{X}_{L,1}$, is a derivation, and it satisfies \eqref{mainequationD}.

It is clear that the map $\operatorname{D}_{L,v_n}$ satisfies the Leibniz rule
\begin{equation} \label{leibniz}
    \operatorname{D}_{L,v_n}(xy) = x \operatorname{D}_{L,v_n}(y) + y \operatorname{D}_{L,v_n}(x) ~~ \forall x, y \in \mathcal{O}_L.
\end{equation}
This follows from the fact that $\Psi_{L,v_n}$ is a group homomorphism. Using this rule, we can prove by induction that
\begin{equation} \label{powerp}
    \operatorname{D}_{L,v_n}(x^m)= m x^{m-1} \operatorname{D}_{L,v_n}(x) ~~ \forall x \in \mathcal{O}_L ~~\text{and }~~\forall m \ge 1.
\end{equation}
We will now prove that $\operatorname{D}_{L,v_n}$ is additive.
\begin{lemma} \label{alphauu}
Suppose $\rho$ is such that $(x,x)_{\rho,L,n}=0$ for all $x \in \mathfrak{p}_L$. Let $\alpha \in \mathfrak{p}_L \setminus \{0\}$ and let $u$ be a unit of $L$ such that $\mu(\alpha(1-u)) > \frac{nm_0}{q} + \frac{1}{q-1}$. We have 
\begin{equation} \label{alphau}
    (\alpha u, u)_{L,n} = \operatorname{T}_{L|K}((1-u)\operatorname{D}_{L,v_n}(\alpha)) \cdot_{\rho} v_n.
\end{equation}
\end{lemma}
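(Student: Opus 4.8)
The plan is to first reduce the symmetric-looking pairing $(\alpha u, u)_{L,n}$ to a single application of the explicit formula of Proposition \ref{iwasawamappsi}, and then to show that only the linear term of the logarithm survives. For the reduction I will exploit the hypothesis $(x,x)_{\rho,L,n}=0$ together with the bilinearity recorded in Proposition \ref{PropoertiesPairing1}(i). Since $\alpha u \in \mathfrak{p}_L$, the hypothesis gives $(\alpha u,\alpha u)_{L,n}=0$; writing $\alpha u = \alpha\cdot u$ in $L^\times$ and using additivity in the second argument yields $(\alpha u,u)_{L,n}=-(\alpha u,\alpha)_{L,n}$. Next, writing $\alpha u=\alpha+\alpha(u-1)$ in the additive group $\mathfrak{p}_L$ and using additivity in the first argument together with $(\alpha,\alpha)_{L,n}=0$ gives $(\alpha u,\alpha)_{L,n}=(\alpha(u-1),\alpha)_{L,n}$. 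Combining the two identities,
$$(\alpha u,u)_{L,n}=(\alpha(1-u),\alpha)_{L,n}.$$

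Now the valuation hypothesis $\mu(\alpha(1-u))>\frac{nm_0}{q}+\frac{1}{q-1}>\frac{1}{q-1}$ shows $\alpha(1-u)\in\mathfrak{p}_{L,1}$, so I may apply Proposition \ref{iwasawamappsi} with first argument $\alpha(1-u)$ and second argument $\alpha$, fixing a representative $y\in\mathfrak{X}_{L,1}$ of $\Psi_{L,v_n}(\alpha)$. Using the expansion $\lambda_{\rho}(X)=\sum_{i\ge 0}c_iX^{q^i}$ with $c_0=1$, I split
$$\lambda_{\rho}(\alpha(1-u))=\alpha(1-u)+P,\qquad P=\sum_{i\ge 1}c_i(\alpha(1-u))^{q^i}.$$
The linear term contributes $\operatorname{T}_{L|K}(\alpha(1-u)\,y)\cdot_{\rho}v_n$, which by the definition $\operatorname{D}_{L,v_n}(\alpha)=\alpha\Psi_{L,v_n}(\alpha)$ is exactly the desired right-hand side $\operatorname{T}_{L|K}((1-u)\operatorname{D}_{L,v_n}(\alpha))\cdot_{\rho}v_n$. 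It therefore remains to prove that the tail contributes nothing, i.e. $\operatorname{T}_{L|K}(P y)\cdot_{\rho}v_n=0$.

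The crux is this vanishing, and the right tool is the intrinsic (duality) description of $\mathfrak{X}_{L,1}$ in \eqref{defX_L1} rather than its explicit valuation. Writing $w=\mu(\alpha(1-u))$ and using $\mu(c_i)\ge -i$ from Lemma \ref{lemmadefrho}, the quantities $-i+q^iw$ are strictly increasing in $i$ (the increment $-1+q^i(q-1)w$ exceeds $q^i-1\ge 1$ because $w>\frac{1}{q-1}$), so $\mu(P)\ge -1+qw$. The hypothesis $w>\frac{nm_0}{q}+\frac{1}{q-1}$ then gives $\mu(\eta^{-n}P)=\mu(P)-nm_0>-1+\frac{q}{q-1}=\frac{1}{q-1}$, hence $\eta^{-n}P\in\mathfrak{p}_{L,1}$. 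Since $y\in\mathfrak{X}_{L,1}$, the defining property \eqref{defX_L1} yields $\operatorname{T}_{L|K}(\eta^{-n}P\cdot y)\in\mathcal{O}_K$, so $\operatorname{T}_{L|K}(Py)\in\eta^n\mathcal{O}_K$. Because $\rho_{\eta^n}(v_n)=0$, the ideal $\eta^n\mathcal{O}_K$ annihilates $v_n$ under $\cdot_\rho$, whence $\operatorname{T}_{L|K}(Py)\cdot_{\rho}v_n=0$; the same annihilation shows the computation is independent of the representative $y$, in agreement with the remark following Proposition \ref{iwasawamappsi}. I expect this last step to be the main obstacle: the naive bound from the explicit valuation of $\mathfrak{X}_{L,1}$ falls short by $\frac{1}{e(L|K)}$, and one must instead feed $\eta^{-n}P$ back into the duality definition of $\mathfrak{X}_{L,1}$ in order to land exactly at the threshold $\frac{1}{q-1}$ that the hypothesis on $\mu(\alpha(1-u))$ provides.
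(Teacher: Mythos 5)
Your proof is correct and follows essentially the same route as the paper's: reduce $(\alpha u,u)_{L,n}$ to $(\alpha(1-u),\alpha)_{L,n}$ using $(x,x)_{\rho,L,n}=0$ and bilinearity, apply Proposition \ref{iwasawamappsi}, and kill the tail of $\lambda_{\rho}$ by showing $\lambda_{\rho}(\alpha(1-u))-\alpha(1-u)\in\eta^{n}\mathfrak{p}_{L,1}$ and invoking the duality description \eqref{defX_L1} of $\mathfrak{X}_{L,1}$. The only differences are cosmetic: the order of the bilinearity manipulations, and your extra explicitness about the monotonicity of $-i+q^{i}\mu(\alpha(1-u))$ and the independence of the chosen representative of $\Psi_{L,v_n}(\alpha)$.
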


\begin{proof}
We have
\begin{align*}
    (\alpha u,u)_{L,n} & = (\alpha u,\frac{\alpha u}{\alpha})_{L,n} \\
    & = (\alpha u, \alpha u)_{L,n} - (\alpha u,\alpha)_{L,n} \\
    & = (\alpha ,\alpha)_{L,n} - (\alpha u,\alpha)_{L,n} \\
    & = (\alpha - \alpha u,\alpha)_{L,n} \\
    & = \operatorname{T}_{L|K} (\lambda_{\rho}(\alpha-\alpha u) \Psi_{L,v_n}(\alpha))\cdot_{\rho} v_n
\end{align*}
by Proposition \ref{iwasawamappsi}. Let $\gamma = \alpha(1-u)$, we will show that 
\begin{equation}
    \operatorname{T}_{L|K} (\lambda_{\rho}(\gamma) \Psi_{L,v_n}(\alpha))\cdot_{\rho} v_n = \operatorname{T}_{L|K} (\gamma \Psi_{L,v_n}(\alpha))\cdot_{\rho} v_n.
\end{equation}
By the hypothesis on the valuations, we have $\mu(\gamma) > \frac{nm_0}{q} + \frac{1}{q-1}$. Hence
\begin{align*}
    \mu(\lambda_{\rho}(\gamma) - \gamma) & = \mu(\sum_{i \ge 1} c_i \gamma^{q^i}) \\
    & \ge \min_{i \ge 1 } \{ \mu(c_i) + q^i \mu(\gamma) \} \\
    & > \min_{i \ge 1 } \{ -i +q^i (\frac{nm_0}{q} + \frac{1}{q-1}) \} \\
    & \ge nm_0 + \frac{1}{q-1}
\end{align*}
Therefore, we can write $\lambda_{\rho}(\gamma) - \gamma = \eta^n \delta$, where $\delta$ is an element of $\mathfrak{p}_{L,1}$. Thus, by \eqref{defX_L1},
\begin{equation*}
    \operatorname{T}_{L|K}((\lambda_{\rho}(\gamma) - \gamma) \Psi_{L,v_n}(\alpha)) \cdot_{\rho} v_n= 0
\end{equation*}
because $\Psi_{L,v_n}(\alpha) \in \mathfrak{X}_{L,1}$. This concludes the proof.
\end{proof}
\begin{prop} \label{D'additive}
Suppose $\rho$ is such that $(x,x)_{\rho,L,n}=0$ for all $x \in \mathfrak{p}_L$. Let $\gamma$ be an element of $\mathcal{O}_L \setminus \{0\}$ of valuation $\mu(\gamma) = \max\{\frac{nm_0}{q},\frac{1}{q-1}\}$, that is $\mu(\gamma) = \frac{nm_0}{q}$ if $nm_0 \ge 2$, and  $\mu(\gamma) = \frac{1}{q-1}$ if $nm_0=1$. Then 
\begin{equation} \label{D'int1}
\operatorname{D}_{L,v_n}(x+y) \equiv \operatorname{D}_{L,v_n}(x)+\operatorname{D}_{L,v_n}(y) \mod{ \frac{\eta^n}{\gamma}\mathfrak{X}_{L,1}}
\end{equation}
for all $x,y \in \mathcal{O}_L$. 
\end{prop}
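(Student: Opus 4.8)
The plan is to establish additivity of $\operatorname{D}_{L,v_n}$ by reducing to the computation in Lemma \ref{alphauu}, where the pairing $(\alpha u, u)_{L,n}$ was evaluated. The key observation is that the Leibniz rule \eqref{leibniz} and the behaviour already known let us compare $\operatorname{D}_{L,v_n}(x+y)$ with $\operatorname{D}_{L,v_n}(x)+\operatorname{D}_{L,v_n}(y)$ by writing everything in terms of a single quantity whose value we control via the hypothesis $(x,x)_{\rho,L,n}=0$. First I would reduce to the case where $x$ and $y$ are of a controlled valuation: since the target module is $\frac{\eta^n}{\gamma}\mathfrak{X}_{L,1}$ and $\mu(\gamma)=\max\{\frac{nm_0}{q},\frac{1}{q-1}\}$, the assertion should be insensitive to adding elements of high valuation, so I can first treat the generic case and handle degenerate valuations separately.

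The central step is to rewrite the difference $\operatorname{D}_{L,v_n}(x+y)-\operatorname{D}_{L,v_n}(x)-\operatorname{D}_{L,v_n}(y)$ using $\operatorname{D}_{L,v_n}(\alpha)=\alpha\Psi_{L,v_n}(\alpha)$ together with the multiplicativity of $\Psi_{L,v_n}$. The natural trick is to factor $x+y = x(1 + y/x)$ (assuming $\mu(y)\ge\mu(x)$, say), apply Leibniz, and then expand $\operatorname{D}_{L,v_n}(1+y/x)$. Setting $u = 1+y/x$ (a unit when $y/x$ has positive valuation), Lemma \ref{alphauu} gives an explicit handle on the relevant pairing through $\operatorname{T}_{L|K}((1-u)\operatorname{D}_{L,v_n}(\alpha))$; the point is that the quantity $1-u = -y/x$ is precisely what appears when comparing the additive and multiplicative expansions. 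I would then track the error terms: each time the logarithm $\lambda_\rho$ is replaced by the identity, or a product is expanded, the discrepancy lands in $\eta^n\mathfrak{X}_{L,1}$ by the valuation estimate already carried out in the proof of Lemma \ref{alphauu}, and after dividing through by the valuation of $\gamma$ one lands in $\frac{\eta^n}{\gamma}\mathfrak{X}_{L,1}$.

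The hardest part, I expect, will be the bookkeeping of the valuation thresholds: one must verify that the error incurred by passing between $\lambda_\rho(\gamma)$ and $\gamma$, and between the multiplicative factorization and the additive sum, is uniformly bounded below by $nm_0 + \frac{1}{q-1} - \mu(\gamma)$, so that after multiplying by $\frac{1}{\gamma}$ the remainder sits in the claimed coset. The choice $\mu(\gamma)=\max\{\frac{nm_0}{q},\frac{1}{q-1}\}$ is engineered exactly so that the inequality $\min_{i\ge 1}\{-i + q^i\mu(\gamma)\} \ge nm_0$ (as in Lemma \ref{alphauu}) holds while keeping the divisor $\gamma$ as large as possible; I would verify this threshold case by case, separating $nm_0\ge 2$ from $nm_0=1$, mirroring the statement's own case distinction. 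The self-pairing hypothesis $(x,x)_{\rho,L,n}=0$ enters to kill the diagonal terms, so that only the genuinely bilinear cross terms survive, and these assemble into the additivity relation modulo the stated submodule.
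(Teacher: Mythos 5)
There is a genuine gap: your central device --- factoring $x+y = x(1+y/x)$ and applying the Leibniz rule --- cannot produce additivity, because for the map $\operatorname{D}_{L,v_n}(\alpha)=\alpha\Psi_{L,v_n}(\alpha)$ the Leibniz rule holds \emph{exactly} and for free (it is equivalent to $\Psi_{L,v_n}$ being a group homomorphism on $L^{\times}$). Carrying out your expansion with $u=1+y/x$ just rewrites the difference $\operatorname{D}_{L,v_n}(x+y)-\operatorname{D}_{L,v_n}(x)-\operatorname{D}_{L,v_n}(y)$ as $(x+y)\Psi_{L,v_n}(1+y/x)-y\Psi_{L,v_n}(y/x)$, a tautology that gives no control. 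There are also concrete obstructions to invoking Lemma \ref{alphauu} at this $u$: taking $\alpha=x$ the hypothesis $\mu(\alpha(1-u))>\frac{nm_0}{q}+\frac{1}{q-1}$ becomes $\mu(y)>\frac{nm_0}{q}+\frac{1}{q-1}$, which fails for generic $y\in\mathcal{O}_L$; moreover $1+y/x$ need not be a unit (or even integral), and your proposed reduction ``the assertion is insensitive to adding elements of high valuation'' presupposes the very additivity you are trying to prove.

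The missing idea is to exploit the additivity of the Kummer pairing in its \emph{first} argument, not a multiplicative factorization of $x+y$. The paper fixes $\gamma$ with $\mu(\gamma)=\max\{\frac{nm_0}{q},\frac{1}{q-1}\}$ precisely so that for every $u\in 1+\mathfrak{p}_{L,1}$ and every $z\in\mathcal{O}_L$ one has $\mu(\gamma z(1-u))>\frac{nm_0}{q}+\frac{1}{q-1}$, making Lemma \ref{alphauu} applicable uniformly. One then computes $(\gamma(x+y)u,u)_{L,n}$ in two ways: directly, via Lemma \ref{alphauu} and the (exact) Leibniz rule, giving $\operatorname{T}_{L|K}\bigl((1-u)((x+y)\operatorname{D}_{L,v_n}(\gamma)+\gamma\operatorname{D}_{L,v_n}(x+y))\bigr)\cdot_{\rho}v_n$; and after splitting $(\gamma(x+y)u,u)_{L,n}=(\gamma xu,u)_{L,n}+(\gamma yu,u)_{L,n}$ by bilinearity of the pairing, giving the same expression with $\operatorname{D}_{L,v_n}(x)+\operatorname{D}_{L,v_n}(y)$ in place of $\operatorname{D}_{L,v_n}(x+y)$. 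Equating these for all $u\in 1+\mathfrak{p}_{L,1}$, so that $1-u$ ranges over $\mathfrak{p}_{L,1}$, the trace-duality definition \eqref{defX_L1} of $\mathfrak{X}_{L,1}$ forces $\gamma\bigl(\operatorname{D}_{L,v_n}(x+y)-\operatorname{D}_{L,v_n}(x)-\operatorname{D}_{L,v_n}(y)\bigr)\in\eta^n\mathfrak{X}_{L,1}$, and dividing by $\gamma$ gives the claim. Your write-up cites Lemma \ref{alphauu} but never supplies the second, independent evaluation of a single pairing value that makes the comparison possible, and $\gamma$ plays no identifiable role in your argument; the remarks about $\lambda_{\rho}$ versus the identity and about ``diagonal terms'' concern steps already internal to Lemma \ref{alphauu} rather than the new content of this proposition.
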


\begin{proof}
Let us prove first why such a $\gamma$ exists. Since $E_{\rho}^n \subset L$, the ramification index of $L|K$ is a multiple of the ramification index of $E_{\rho}^n|K$, which is equal to $q^{nm_0-1}(q-1)$. Hence, there exists elements in $L$ of valuation $\frac{1}{q^{nm_0-1}(q-1)}$, whence the existence of $\gamma$. Now let us prove \eqref{D'int1}.
Let $x, y \in \mathcal{O}_L$, then, by Lemma \ref{alphauu}, we have
\begin{align*}
    (\gamma(x+y) u, u)_{L,n} & = \operatorname{T}_{L|K}((1-u)\operatorname{D}_{L,v_n}(\gamma(x+y))) \cdot_{\rho} v_n \\
    & = \operatorname{T}_{L|K}((1-u)((x+y)\operatorname{D}_{L,v_n}(\gamma)+ \gamma \operatorname{D}_{L,v_n}((x+y))) \cdot_{\rho} v_n \numberthis \label{plusD1}
\end{align*}
for all $u \in 1+ \mathfrak{p}_{L,1}$.
However, again by Lemma \ref{alphauu}, we have
\begin{align*}
    (\gamma (x+y)u, u)_{L,n} & = (\gamma x u, u)_{L,n}+(\gamma y u, u)_{L,n} \\
    & = \operatorname{T}_{L|K}((1-u)\operatorname{D}_{L,v_n}(\gamma x )) \cdot_{\rho} v_n + \operatorname{T}_{L|K}((1-u)\operatorname{D}_{L,v_n}(\gamma y )) \cdot_{\rho} v_n \\
    & = \operatorname{T}_{L|K}((1-u)(\operatorname{D}_{L,v_n}(\gamma x)+\operatorname{D}_{L,v_n}(\gamma y ))) \cdot_{\rho} v_n\\
    & = \operatorname{T}_{L|K}((1-u)((x+y)\operatorname{D}_{L,v_n}(\gamma)+ \gamma (\operatorname{D}_{L,v_n}(x)+\operatorname{D}_{L,v_n}(y))) \cdot_{\rho} v_n \numberthis \label{plusD2}
\end{align*}
for all $u \in 1+ \mathfrak{p}_{L,1}$. Therefore, \eqref{plusD1} and \eqref{plusD2} being equal, we conclude that 
\begin{equation}
    \gamma \operatorname{D}_{L,v_n}(x+y) \equiv \gamma (\operatorname{D}_{L,v_n}(x)+\operatorname{D}_{L,v_n}(y)) \mod{\eta^n \mathfrak{X}_{L,1}}
\end{equation}
by the very definition \eqref{defX_L1} of $\mathfrak{X}_{L,1}$. Hence, we have
\begin{equation}
    \operatorname{D}_{L,v_n}(x+y) \equiv  \operatorname{D}_{L,v_n}(x)+\operatorname{D}_{L,v_n}(y) \mod{\frac{\eta^n}{\gamma} \mathfrak{X}_{L,1}}.  
\end{equation}
\end{proof}
\begin{corollary} \label{Dadditive}
Let $\gamma$ be as in Proposition \ref{D'additive}. Then 
\begin{equation}
\operatorname{D}_{L,v_n}(x+y) \equiv \operatorname{D}_{L,v_n}(x)+\operatorname{D}_{L,v_n}(y) \mod{ \frac{\eta^n}{\gamma}\mathfrak{X}_{L,1}}
\end{equation}
for all $x,y \in \mathcal{O}_L$.     
\end{corollary}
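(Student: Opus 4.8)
The plan is to reduce the statement to the case already established in Proposition~\ref{D'additive}, by replacing $\rho$ with an isomorphic formal Drinfeld module that satisfies the self-pairing hypothesis, and then transporting additivity back along the isomorphism. First I would apply Proposition~\ref{existencer} and Lemma~\ref{condrhoprime} to the given $\rho$: they furnish the invertible series $r=r_n\in\mathcal{O}_H\{\{\tau\}\}$ and the formal Drinfeld module $\rho'$ of stable reduction of height one defined by $\rho'_a=r\circ\rho_a\circ r^{-1}$, which satisfies $(x,x)_{\rho',L,n}=0$ for all $x\in\mathfrak{p}_L$. Put $v'_n:=r(v_n)$. Since $r$ is invertible with coefficients in $\mathcal{O}_H$, the element $v'_n$ is a generator of $W_{\rho'}^n=r(W_{\rho}^n)$ and $E_{\rho'}^n=E_{\rho}^n\subseteq L$; moreover the module $\mathfrak{X}_{L,1}$ and the element $\gamma$ depend only on $L|K$ and $n$ by~\eqref{defX_L1}, so they coincide for $\rho$ and $\rho'$. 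Writing $D^{\rho'}_{L,v'_n}$ and $\Psi^{\rho'}_{L,v'_n}$ for the objects attached to $\rho'$ and $v'_n$, Proposition~\ref{D'additive} applied to $\rho'$ then gives
\begin{equation*}
D^{\rho'}_{L,v'_n}(x+y)\equiv D^{\rho'}_{L,v'_n}(x)+D^{\rho'}_{L,v'_n}(y)\pmod{\frac{\eta^n}{\gamma}\mathfrak{X}_{L,1}}.
\end{equation*}

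The core step is to relate $D^{\rho'}_{L,v'_n}$ to $D_{L,v_n}=D^{\rho}_{L,v_n}$. Because $D^{\rho}_{L,v_n}(\alpha)=\alpha\,\Psi^{\rho}_{L,v_n}(\alpha)$, it is enough to compare $\Psi^{\rho}_{L,v_n}$ with $\Psi^{\rho'}_{L,v'_n}$. For this I would expand both sides of property~(vi) of Proposition~\ref{PropoertiesPairing1}, namely $(\alpha,\beta)_{\rho',L,n}=r\big((r^{-1}(\alpha),\beta)_{\rho,L,n}\big)$, using the defining identity~\eqref{eqiwasawamappsi}. The ingredient needed is the transformation law of the logarithm: from $\lambda_\rho\rho_a=a\lambda_\rho$ one checks that $\lambda_\rho\circ r^{-1}$ again intertwines $\rho'$ with scalar multiplication, so after the normalization of Lemma~\ref{lemmadefrho} one has $\lambda_{\rho'}=r_0\,(\lambda_\rho\circ r^{-1})$, where $r_0\in\mathcal{O}_H^{\times}$ is the $\tau^0$-coefficient of $r$ (a unit, this being exactly the invertibility condition). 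Substituting this together with the intertwining $r\circ\rho_d=\rho'_d\circ r$, which turns $r(d\cdot_{\rho} v_n)$ into $d\cdot_{\rho'}v'_n$ for $d\in\mathcal{O}_K$, and comparing the result with the direct expression of $(\alpha,\beta)_{\rho',L,n}$ through $\Psi^{\rho'}_{L,v'_n}$, the uniqueness clause of Proposition~\ref{iwasawamappsi} forces $\Psi^{\rho'}_{L,v'_n}=r_0^{-1}\Psi^{\rho}_{L,v_n}$, and hence $D^{\rho'}_{L,v'_n}=r_0^{-1}D_{L,v_n}$.

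It then remains to cancel the unit. As $r_0\in\mathcal{O}_H^{\times}\subseteq\mathcal{O}_L^{\times}$ and both $\mathfrak{X}_{L,1}$ and $\frac{\eta^n}{\gamma}\mathfrak{X}_{L,1}$ are fractional $\mathcal{O}_L$-ideals, multiplication by $r_0$ is an automorphism of $\mathfrak{X}_{L,1}$ preserving $\frac{\eta^n}{\gamma}\mathfrak{X}_{L,1}$. Multiplying the additivity congruence for $D^{\rho'}_{L,v'_n}=r_0^{-1}D_{L,v_n}$ by $r_0$ yields exactly the claimed congruence for $D_{L,v_n}$.

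The main obstacle is the middle step, i.e.\ transporting $\Psi_{L,v_n}$ (equivalently $D_{L,v_n}$) correctly along $r$. One must carry out the computation in the twisted ring $\mathcal{O}_H\{\{\tau\}\}$ so that $\lambda_{\rho'}=r_0(\lambda_\rho\circ r^{-1})$ and $r\rho_d=\rho'_d r$ hold, keep track of the change of generator $v_n\mapsto v'_n=r(v_n)$ in order to invoke the uniqueness in Proposition~\ref{iwasawamappsi}, and verify the routine valuation estimate $\mu(r^{-1}(\alpha))=\mu(\alpha)$ guaranteeing $r^{-1}(\alpha)\in\mathfrak{p}_{L,1}$ when $\alpha\in\mathfrak{p}_{L,1}$, so that~\eqref{eqiwasawamappsi} may be applied. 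Once the unit relation $D^{\rho'}_{L,v'_n}=r_0^{-1}D_{L,v_n}$ is in place, the rest is formal.
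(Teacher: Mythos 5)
Your proposal is correct and follows essentially the same route as the paper: pass to $\rho'_a=r\circ\rho_a\circ r^{-1}$ via Proposition~\ref{existencer} and Lemma~\ref{condrhoprime}, apply Proposition~\ref{D'additive} to $\rho'$, and transport additivity back through the unit relation $\operatorname{D}_{\rho,L,v_n}=r'(0)\operatorname{D}_{\rho',L,r(v_n)}$. The only difference is that you supply the derivation of this relation (via Proposition~\ref{PropoertiesPairing1}~(vi), the transformation $\lambda_{\rho'}=r'(0)\,(\lambda_\rho\circ r^{-1})$, and the uniqueness in Proposition~\ref{iwasawamappsi}), which the paper states as \eqref{D-D'} without proof.
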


\begin{proof}
Let $r$ be the series defined in Proposition \ref{existencer} and let $\rho'$ the Drinfeld module defined by 
$$  \rho'_a= r \circ \rho_a \circ r^{-1}. $$
Then $r$ defines an isomorphism of $\mathcal{O}_K$-modules $r: W_{\rho}^n \to W_{\rho'}^n$. Furthermore, if we denote by $\operatorname{D}_{\rho,L,v_n}$ (respectively $\operatorname{D}_{\rho',L,r(v_n)}$) the map defined in the beginning of \S \ref{sectionderivationD} associated to $\rho$ (respectively $\rho'$), we have
\begin{equation} \label{D-D'}
     \operatorname{D}_{\rho,L,v_n}= r'(0) \operatorname{D}_{\rho',L,r(v_n)}.
\end{equation}
Here, $r'(0)$ is a  unit in $H$ because $r(X) \in \mathcal{O}_H[[X]]$ is invertible. Since $(x,x)_{\rho',L,n}=0$ for all $x \in \mathfrak{p}_L$ by Lemma \ref{condrhoprime}, we can apply Proposition \ref{D'additive} for $\rho'$ so that
\begin{equation}
\operatorname{D}_{\rho',L,r(v_n)}(x+y) \equiv \operatorname{D}_{\rho',L,r(v_n)}(x)+\operatorname{D}_{\rho',L,r(v_n)}(y) \mod{ \frac{\eta^n}{\gamma}\mathfrak{X}_{L,1}}
\end{equation}
for all $x,y \in \mathcal{O}_L$. Thus, using \eqref{D-D'}, we conclude that 
\begin{equation}
\operatorname{D}_{\rho,L,v_n}(x+y) \equiv \operatorname{D}_{\rho,L,v_n}(x)+\operatorname{D}_{\rho,L,v_n}(y) \mod{ \frac{\eta^n}{\gamma}\mathfrak{X}_{L,1}}
\end{equation}
for all $x,y \in \mathcal{O}_L$.   
\end{proof}
\begin{prop}\label{propdefDbar}
Let
\begin{equation}
    \mathfrak{X}_{L,1}^{(n)} = \{ y \in L; ~ \mu(y) \ge nm_0 - \max\{\frac{nm_0}{q},\frac{1}{q-1}\} - \frac{1}{q-1} - \frac{1}{e(L|K)} - \mu(\mathcal{D}(L|K)) \} \subset \mathfrak{X}_{L,1}.
\end{equation}

The reduction of $\operatorname{D}_{L,v_n}$ modulo $\mathfrak{X}_{L,1}^{(n)}$, denoted by $\Bar{\operatorname{D}}_{L,v_n}:\mathcal{O}_L \longrightarrow \mathfrak{X}_{L,1} / \mathfrak{X}_{L,1}^{(n)}$, is an $\mathcal{O}_K$-derivation. 
\end{prop}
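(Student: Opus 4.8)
The plan is to verify, for the reduced map $\operatorname{\Bar{D}}_{L,v_n}$, the three defining properties of an $\mathcal{O}_K$-derivation: additivity, the Leibniz rule, and $\mathcal{O}_K$-linearity (equivalently, given the first two, that $\operatorname{\Bar{D}}_{L,v_n}$ kills $\mathcal{O}_K$). Before anything else one checks that $\operatorname{\Bar{D}}_{L,v_n}$ is well defined, i.e. that $\operatorname{D}_{L,v_n}$ really does factor through the projection $\mathfrak{X}_{L,1}/\eta^n\mathfrak{X}_{L,1}\to\mathfrak{X}_{L,1}/\mathfrak{X}_{L,1}^{(n)}$. Writing all three modules as valuation ideals, this is the pair of inclusions $\eta^n\mathfrak{X}_{L,1}\subseteq\mathfrak{X}_{L,1}^{(n)}\subseteq\mathfrak{X}_{L,1}$, which I would obtain by comparing the lower bounds in \eqref{defX_L1} with the bound defining $\mathfrak{X}_{L,1}^{(n)}$: since $\mu(\eta^n)=nm_0$ and $\max\{\tfrac{nm_0}{q},\tfrac1{q-1}\}\ge 0$ the first inclusion is immediate, while the second reduces to $nm_0\ge\max\{\tfrac{nm_0}{q},\tfrac1{q-1}\}$, which holds because $q\ge 2$.

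The key computation is the identity $\tfrac{\eta^n}{\gamma}\mathfrak{X}_{L,1}=\mathfrak{X}_{L,1}^{(n)}$ for $\gamma$ as in Proposition \ref{D'additive}: subtracting $\mu(\gamma)=\max\{\tfrac{nm_0}{q},\tfrac1{q-1}\}$ from the lower bound of $\eta^n\mathfrak{X}_{L,1}$ produces precisely the bound defining $\mathfrak{X}_{L,1}^{(n)}$. Granting this, the congruence of Corollary \ref{Dadditive} modulo $\tfrac{\eta^n}{\gamma}\mathfrak{X}_{L,1}$ becomes an exact equality after projecting to $\mathfrak{X}_{L,1}/\mathfrak{X}_{L,1}^{(n)}$, so $\operatorname{\Bar{D}}_{L,v_n}$ is additive; in effect $\mathfrak{X}_{L,1}^{(n)}$ is designed to be the smallest submodule absorbing the additivity defect. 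The Leibniz rule comes for free, since \eqref{leibniz} already holds modulo $\eta^n\mathfrak{X}_{L,1}$, hence a fortiori modulo the larger submodule $\mathfrak{X}_{L,1}^{(n)}$.

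It remains to show $\operatorname{\Bar{D}}_{L,v_n}(\mathcal{O}_K)=0$, which I regard as the main obstacle, and I would treat it in three steps. First, the power rule \eqref{powerp} with exponent $q_L$ and the perfectness of the residue field kill the coefficient field: for $\zeta\in\mathbb{F}_{q_L}$ one has $\operatorname{\Bar{D}}_{L,v_n}(\zeta)=\operatorname{\Bar{D}}_{L,v_n}(\zeta^{q_L})=q_L\zeta^{q_L-1}\operatorname{\Bar{D}}_{L,v_n}(\zeta)=0$, as $q_L\equiv 0$ in characteristic $p$; in particular $\operatorname{\Bar{D}}_{L,v_n}$ vanishes on $\mathbb{F}_q$. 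Second, a uniformizer $\pi_K$ of $K$ is a universal norm in the tower $\bigcup_m E_{\rho}^m=H(V_{\rho})$: because $\bigcup_m W_{\rho}^m=V_{\rho}$ this is the full $\mathfrak{p}_K$-division tower of the height-one module $\rho$, on which the reciprocity image of $\pi_K$ acts trivially (it is the Frobenius, which fixes the totally ramified division tower), so $(\alpha,\pi_K)_{L,n}=\Phi_L(\pi_K)(\xi)-\xi=0$ for every $\alpha\in\mathfrak{p}_{L,1}$. By \eqref{eqiwasawamappsi} this gives $\operatorname{T}_{L|K}(\lambda_{\rho}(\alpha)\Psi_{L,v_n}(\pi_K))\in\eta^n\mathcal{O}_K$ for all such $\alpha$, and the defining duality \eqref{defX_L1} then forces $\Psi_{L,v_n}(\pi_K)\in\eta^n\mathfrak{X}_{L,1}$, that is $\operatorname{\Bar{D}}_{L,v_n}(\pi_K)=\overline{\pi_K\Psi_{L,v_n}(\pi_K)}=0$. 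Third, $\operatorname{\Bar{D}}_{L,v_n}$ is continuous: if $\mu(\alpha)\ge nm_0$ then $\mu(\alpha\Psi_{L,v_n}(\alpha))$ already exceeds the bound of $\mathfrak{X}_{L,1}^{(n)}$ because $\Psi_{L,v_n}(\alpha)\in\mathfrak{X}_{L,1}$, so $\operatorname{\Bar{D}}_{L,v_n}$ vanishes near $0$. Combining the three steps with additivity and the Leibniz rule over $\mathcal{O}_K=\mathbb{F}_q[[\pi_K]]$ yields $\operatorname{\Bar{D}}_{L,v_n}(a)=0$ for all $a\in\mathcal{O}_K$.

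The only genuinely nonformal ingredient is the second step: the remaining items are valuation bookkeeping or immediate consequences of the additivity and Leibniz rule already proved. The crux is therefore the class-field-theoretic fact that $\bigcup_m E_{\rho}^m$ is the complete division tower of a Lubin--Tate--type height-one module, so that the base uniformizer is a true universal norm and the symbol $(\,\cdot\,,\pi_K)_{L,n}$ vanishes identically --- this is precisely what upgrades the formal differential calculus of \eqref{leibniz}, \eqref{powerp} and Corollary \ref{Dadditive} into a bona fide $\mathcal{O}_K$-derivation.
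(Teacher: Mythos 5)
Your overall architecture (verify well-definedness, additivity, Leibniz, and vanishing on $\mathcal{O}_K$ directly) is legitimate, and most of the steps are sound: the inclusions $\eta^n\mathfrak{X}_{L,1}\subseteq\mathfrak{X}_{L,1}^{(n)}\subseteq\mathfrak{X}_{L,1}$, the identity $\mathfrak{X}_{L,1}^{(n)}=\tfrac{\eta^n}{\gamma}\mathfrak{X}_{L,1}$, additivity via Corollary \ref{Dadditive}, the Leibniz rule, the vanishing on $\mathbb{F}_{q_L}$ via \eqref{powerp}, and the local constancy near $0$ are all correct and match ingredients of the paper's proof. The gap is exactly where you locate the crux: the claim that $(\alpha,\pi_K)_{L,n}=0$ for all $\alpha\in\mathfrak{p}_{L,1}$, equivalently $\Psi_{L,v_n}(\pi_K)=0$ in $\mathfrak{X}_{L,1}/\eta^n\mathfrak{X}_{L,1}$. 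This is strictly stronger than what is needed (namely $\pi_K\Psi_{L,v_n}(\pi_K)\in\mathfrak{X}_{L,1}^{(n)}$, which only says $\Psi_{L,v_n}(\pi_K)\in\tfrac{\eta^n}{\gamma\pi_K}\mathfrak{X}_{L,1}$), and the justification is wrong: $\Phi_L(\pi_K)$ fixes the Lubin--Tate tower attached to the norm class of $\pi_K$, not the division tower of the given $\rho$. The splitting of $L^{ab}$ into unramified and totally ramified parts depends on the choice of uniformizer, and $\pi_K$ is an arbitrary uniformizer of $K$ with no relation imposed to $\rho$ or $\eta$; already in the Carlitz case $K=H$, $\rho_T=T+\tau$, $\pi_K=(1+T)T$, the element $\Phi_L(\pi_K)$ acts on $W_{\rho}^{m+n}$ through $\mathbf{r}$ by a unit $\not\equiv 1 \bmod \eta^{m+n}$, so "Frobenius fixes the totally ramified division tower" fails. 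Your step 2 is therefore unsubstantiated, and with it the whole $\mathcal{O}_K$-linearity.

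What actually makes $\Bar{\operatorname{D}}_{L,v_n}(\pi_K)=0$ true is not class field theory but the differential calculus you set aside as "formal": writing $\pi_K=g(\pi_L)$ with $g\in\mathbb{F}_{q_L}[[X]]$, your own steps (additivity, continuity, \eqref{powerp}) give $\Bar{\operatorname{D}}_{L,v_n}(\pi_K)=g'(\pi_L)\,\Bar{\operatorname{D}}_{L,v_n}(\pi_L)$, and one needs $g'(\pi_L)\in\mathcal{D}(L|K)$ together with $\mu(\eta^n/\gamma)\le\mu(\mathcal{D}(L|K))$ to conclude that this lies in $\mathfrak{X}_{L,1}^{(n)}$. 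That divisibility is precisely the content of Proposition \ref{structureOmega} ($\Omega_{\mathcal{O}_K}(\mathcal{O}_L)\simeq\mathcal{O}_L/\mathcal{D}(L|K)$, generated by $d\pi_L$, so $d\pi_K=g'(\pi_L)\,d\pi_L=0$ forces $g'(\pi_L)\in\mathcal{D}(L|K)$). The paper's proof packages this by checking that $w=\Bar{\operatorname{D}}_{L,v_n}(\pi_L)$ is $\mathcal{D}(L|K)$-torsion in $\mathfrak{X}_{L,1}/\mathfrak{X}_{L,1}^{(n)}$, invoking Corollary \ref{corostructureOmega} to produce a genuine $\mathcal{O}_K$-derivation $D$ with $D(\pi_L)=w$, and then showing $\Bar{\operatorname{D}}_{L,v_n}=D$ by the same expansion $x=g(\pi_L)\mapsto g'(\pi_L)w$. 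If you replace your step 2 by this argument (or by the direct verification that $g'(\pi_L)\in\mathcal{D}(L|K)$ whenever $g(\pi_L)\in\mathcal{O}_K$), your proof closes and becomes essentially equivalent to the paper's.
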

\begin{proof}
Let $\gamma \in \mathcal{O}_L \setminus \{0\}$ be as in Proposition \ref{D'additive}, then 
\begin{equation}
    \mathfrak{X}_{L,1}^{(n)}= \frac{\eta^n}{\gamma}\mathfrak{X}_{L,1}.
\end{equation}
Let $\pi_L$ be a prime of $L$ and let $w= \Bar{\operatorname{D}}_{L,v_n}(\pi_L) \in \mathfrak{X}_{L,1} /\mathfrak{X}_{L,1}^{(n)}$. Since $\mu(\frac{\eta^n}{\gamma}) = nm_0-\mu(\gamma) \le nm_0 - \frac{1}{q-1} \le \mu(\mathcal{D}(L|K))$, we have $\mathcal{D}(L|K)w =0$. Hence, by Corollary \ref{corostructureOmega}, there exists a derivation $\operatorname{D}:\mathcal{O}_L \longrightarrow  \mathfrak{X}_{L,1} / \frac{\eta^n}{\gamma} \mathfrak{X}_{L,1}$ such that $\operatorname{D}(\pi_L) = w$ and 
\begin{equation} \label{Dg}
    \operatorname{D}(g(\pi_L)) = g'(\pi_L) w
\end{equation}
for every power series $g \in \mathcal{O}_{\tilde{L}}[[X]]$, where $\tilde{L}$ is the maximal subextension of $L$
unramified over $K$. In particular, \eqref{Dg} is true for all the power series defined over the residue field of $\tilde{L}$, which is equal to the residue field of $L$. We will prove that $D$ and $\Bar{\operatorname{D}}_{L,v_n}$ are equal. Indeed, let $x \in \mathcal{O}_L$, and let $g (X) = \sum_{i \ge 0} a_i X^i$ be the unique power series defined over the residue field $\mathbb{F}_L$ of $L$ such that $g(\pi_L) = x$. We have
\begin{equation} \label{d=d1}
    \Bar{\operatorname{D}}_{L,v_n}(x) = \Bar{\operatorname{D}}_{L,v_n}(g(\pi_L)) = \sum_{i \ge 0} \Bar{\operatorname{D}}_{L,v_n}(a_i\pi_L^i)
\end{equation}
because $\Bar{\operatorname{D}}_{L,v_n}$ is additive by Proposition \ref{Dadditive}, and continuous by Proposition \ref{iwasawamappsi}. Let $q_L$ be the cardinal of $\mathbb{F}_{q_L}$, then $q_L$ is a power of $p$. Hence, for all $i \ge 0$, we have $$\Bar{\operatorname{D}}_{L,v_n}(a_i)=\Bar{\operatorname{D}}_{L,v_n}(a_i^{q_L})=0$$ 
by \eqref{powerp}. Therefore, applying the Leibniz rule \eqref{leibniz} to \eqref{d=d1}, we get
\begin{equation}
    \Bar{\operatorname{D}}_{L,v_n}(x)= \sum_{i \ge 0} a_i \Bar{\operatorname{D}}_{L,v_n}(\pi_L^i)= \sum_{i \ge 0} a_i \times i \times  \pi_L^{i-1} \times \Bar{\operatorname{D}}_{L,v_n}(\pi_L)
\end{equation}
again by \eqref{powerp}. However, this is equal to $g'(\pi_L) \Bar{\operatorname{D}}_{L,v_n}(\pi_L)$, which is, by \eqref{Dg}, equal to $\operatorname{D}(x)$.
\end{proof}
Now, we will define the logarithmic derivative $\operatorname{dlog\Bar{D}}_{L,v_n}$ associated to the derivation $\Bar{\operatorname{D}}_{L,v_n}$ as follows. Let
\begin{equation}
   f: \mathfrak{X}_{L,1} / \mathfrak{X}_{L,1}^{(n)}  \to  \pi_L^{-1}\mathfrak{X}_{L,1} / \pi_L^{-1}\mathfrak{X}_{L,1}^{(n)}
\end{equation}
be the natural map induced by the inclusion $\mathfrak{X}_{L,1} \hookrightarrow  \pi_L^{-1}\mathfrak{X}_{L,1} $, and 
\begin{equation}
   g_{\pi_L}: \mathfrak{X}_{L,1} / \mathfrak{X}_{L,1}^{(n)}  \to \pi_L^{-1}\mathfrak{X}_{L,1} / \pi_L^{-1}\mathfrak{X}_{L,1}^{(n)} 
\end{equation}
be the multiplication by $\pi_L^{-1}$ map. For $x= u \pi_L^k \in L^{\times}$, where $u$ is a unit in $L$, we define
\begin{equation}
    \operatorname{dlog\Bar{D}}_{L,v_n}(x)= f(u^{-1} \Bar{\operatorname{D}}_{L,v_n}(u)) + k  g_{\pi_L}(\Bar{\operatorname{D}}_{L,v_n}(\pi_L)).
\end{equation}
The map $\operatorname{dlog\Bar{D}}_{L,v_n}: L^{\times} \to \pi_L^{-1}\mathfrak{X}_{L,1} / \pi_L^{-1}\mathfrak{X}_{L,1}^{(n)}$ is a group homomorphism. Furthermore, its definition does not depend on the choice of the uniformizer $\pi_L$. Indeed, let $\pi_L'$ be another uniformizer of $L$ and let $x=u \pi_L^k = u' {\pi'_L}^k \in L^{\times}$, where $u$ and $u'$ are units of $L$. Let $u_0$ be the unit of $L$ such that $\pi_L'=u_0 \pi_L$. Then,
\begin{align*}
    f(u^{-1} \Bar{\operatorname{D}}_{L,v_n}(u)) + k  g_{\pi_L}(\Bar{\operatorname{D}}_{L,v_n}(\pi_L))
    & = f({u'}^{-1} u_0^{-k} \Bar{\operatorname{D}}_{L,v_n}(u' u_0^k)) + k  g_{\pi_L}(\Bar{\operatorname{D}}_{L,v_n}(\pi_L)) \\
    & = f({u'}^{-1} \Bar{\operatorname{D}}_{L,v_n}(u')+ u_0^{-k} \Bar{\operatorname{D}}_{L,v_n}(u_0^k)) +  k  g_{\pi_L}(\Bar{\operatorname{D}}_{L,v_n}(\pi_L)) \\
    & =f({u'}^{-1} \Bar{\operatorname{D}}_{L,v_n}(u'))+ f(u_0^{-k} \Bar{\operatorname{D}}_{L,v_n}(u_0^k)) +  k  g_{\pi_L}(\Bar{\operatorname{D}}_{L,v_n}(\pi_L)) \\
    & = f({u'}^{-1} \Bar{\operatorname{D}}_{L,v_n}(u')) + k f( u_0^{-1} \Bar{\operatorname{D}}_{L,v_n}(u_0)) +  k  g_{\pi_L}(\Bar{\operatorname{D}}_{L,v_n}(\pi_L)). \numberthis \label{dlogdef1}
\end{align*}
On the other hand, we have
\begin{align*}
    g_{\pi_L'}( \Bar{\operatorname{D}}_{L,v_n}(\pi_L')) & = g_{\pi_L'}( \Bar{\operatorname{D}}_{L,v_n}(u_0 \pi_L)) \\
    & = g_{\pi_L'}( u_0 \Bar{\operatorname{D}}_{L,v_n}(\pi_L) + \pi_L \Bar{\operatorname{D}}_{L,v_n}(u_0)) \\
    & \equiv {\pi_L'}^{-1} (u_0 \Bar{\operatorname{D}}_{L,v_n}(\pi_L) + \pi_L \Bar{\operatorname{D}}_{L,v_n}(u_0)) \mod{\pi_L^{-1}\mathfrak{X}_{L,1}^{(n)}} \\ 
    & \equiv u_0^{-1} \pi_L^{-1} (u_0 \Bar{\operatorname{D}}_{L,v_n}(\pi_L) + \pi_L \Bar{\operatorname{D}}_{L,v_n}(u_0)) \mod{\pi_L^{-1}\mathfrak{X}_{L,1}^{(n)}} \\
    & \equiv \pi_L^{-1} \Bar{\operatorname{D}}_{L,v_n}(\pi_L) +u_0^{-1} \Bar{\operatorname{D}}_{L,v_n}(u_0) \mod{\pi_L^{-1}\mathfrak{X}_{L,1}^{(n)}} \\
    & = f( u_0^{-1} \Bar{\operatorname{D}}_{L,v_n}(u_0)) + g_{\pi_L}(\Bar{\operatorname{D}}_{L,v_n}(\pi_L)). \numberthis \label{dlogdef2}
\end{align*}
Therefore, \eqref{dlogdef1} and \eqref{dlogdef2} yield that $\operatorname{dlog\Bar{D}}_{L,v_n}$ does not depend on the choice of $\pi_L$.

\begin{theorem} \label{mainTH}
The derivation $\Bar{\operatorname{D}}_{L,v_n}:\mathcal{O}_L \longrightarrow \mathfrak{X}_{L,1} / \mathfrak{X}_{L,1}^{(n)}$ satisfies
\begin{equation} \label{equalitylogder}
(\alpha,\beta)_{L,n}=  T_{L|K}(\lambda_{\rho}(\alpha)\operatorname{dlog\Bar{D}}_{L,v_n}(\beta)) \cdot_{\rho} v_n
\end{equation}
for all $\alpha$ such that $\mu(\alpha) > \max\{\frac{nm_0}{q},\frac{1}{q-1}\} + \frac{1}{q-1} + \frac{1}{e(L|K)}$ and for all $\beta\in L^{\times}$.
\end{theorem}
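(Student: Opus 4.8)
The plan is to derive \eqref{equalitylogder} directly from the expression already established in Proposition \ref{iwasawamappsi}, namely $(\alpha,\beta)_{L,n} = \operatorname{T}_{L|K}(\lambda_{\rho}(\alpha)\Psi_{L,v_n}(\beta)) \cdot_{\rho} v_n$, which holds for every $\alpha \in \mathfrak{p}_{L,1}$ and $\beta \in L^{\times}$. Note that the hypothesis $\mu(\alpha) > \max\{\frac{nm_0}{q},\frac{1}{q-1}\} + \frac{1}{q-1} + \frac{1}{e(L|K)}$ forces $\mu(\alpha) > \frac{1}{q-1}$, so $\alpha \in \mathfrak{p}_{L,1}$ and that formula applies. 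It therefore suffices to show that, under the stated lower bound on $\mu(\alpha)$, one may replace $\Psi_{L,v_n}(\beta)$ by $\operatorname{dlog\Bar{D}}_{L,v_n}(\beta)$ inside $\operatorname{T}_{L|K}(\lambda_{\rho}(\alpha)\,\cdot\,)\cdot_{\rho} v_n$ without changing the value.

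The first step is to produce a single element of $L$ that simultaneously lifts $\Psi_{L,v_n}(\beta) \in \mathfrak{X}_{L,1}/\eta^n\mathfrak{X}_{L,1}$ and $\operatorname{dlog\Bar{D}}_{L,v_n}(\beta) \in \pi_L^{-1}\mathfrak{X}_{L,1}/\pi_L^{-1}\mathfrak{X}_{L,1}^{(n)}$, which live a priori in distinct quotients. Writing $\beta = u\pi_L^k$ with $u$ a unit, I fix lifts in $\mathfrak{X}_{L,1}$ of $\operatorname{D}_{L,v_n}(u)$ and $\operatorname{D}_{L,v_n}(\pi_L)$ and set $y := u^{-1}\operatorname{D}_{L,v_n}(u) + k\pi_L^{-1}\operatorname{D}_{L,v_n}(\pi_L)$. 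Because $\Psi_{L,v_n}$ is a group homomorphism and $\operatorname{D}_{L,v_n}(x) = x\Psi_{L,v_n}(x)$, one has $\Psi_{L,v_n}(\beta) = \Psi_{L,v_n}(u) + k\Psi_{L,v_n}(\pi_L) = u^{-1}\operatorname{D}_{L,v_n}(u) + k\pi_L^{-1}\operatorname{D}_{L,v_n}(\pi_L)$ modulo $\eta^n\mathfrak{X}_{L,1}$, so $y$ lifts $\Psi_{L,v_n}(\beta)$; in particular $y \in \mathfrak{X}_{L,1}$. Since $\Bar{\operatorname{D}}_{L,v_n}$ is the reduction of $\operatorname{D}_{L,v_n}$ modulo $\mathfrak{X}_{L,1}^{(n)}$, the same chosen lifts represent $\Bar{\operatorname{D}}_{L,v_n}(u)$ and $\Bar{\operatorname{D}}_{L,v_n}(\pi_L)$, and unwinding the definition of $\operatorname{dlog\Bar{D}}_{L,v_n}$ through the maps $f$ and $g_{\pi_L}$ shows that $y$ is equally a lift of $\operatorname{dlog\Bar{D}}_{L,v_n}(\beta)$.

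The core of the argument is to prove that $y' \mapsto \operatorname{T}_{L|K}(\lambda_{\rho}(\alpha)y') \cdot_{\rho} v_n$ is constant on the coset $y + \pi_L^{-1}\mathfrak{X}_{L,1}^{(n)}$, that is, that $\operatorname{T}_{L|K}(\lambda_{\rho}(\alpha)\delta)\cdot_{\rho} v_n = 0$ for all $\delta \in \pi_L^{-1}\mathfrak{X}_{L,1}^{(n)}$. Using $\mathfrak{X}_{L,1}^{(n)} = \frac{\eta^n}{\gamma}\mathfrak{X}_{L,1}$ from Proposition \ref{propdefDbar}, where $\mu(\gamma) = \max\{\frac{nm_0}{q},\frac{1}{q-1}\}$, I write $\delta = \frac{\eta^n}{\gamma\pi_L}w$ with $w \in \mathfrak{X}_{L,1}$, so that $\operatorname{T}_{L|K}(\lambda_{\rho}(\alpha)\delta) = \eta^n\operatorname{T}_{L|K}\!\big(\tfrac{\lambda_{\rho}(\alpha)}{\gamma\pi_L}w\big)$. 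By Lemma \ref{lemmadefrho}(iii), $\mu(\lambda_{\rho}(\alpha)) = \mu(\alpha)$, hence $\mu\!\big(\tfrac{\lambda_{\rho}(\alpha)}{\gamma\pi_L}\big) = \mu(\alpha) - \max\{\tfrac{nm_0}{q},\tfrac{1}{q-1}\} - \tfrac{1}{e(L|K)} > \tfrac{1}{q-1}$ exactly by the hypothesis on $\mu(\alpha)$, so that $\tfrac{\lambda_{\rho}(\alpha)}{\gamma\pi_L} \in \mathfrak{p}_{L,1}$. The defining property \eqref{defX_L1} of $\mathfrak{X}_{L,1}$ then gives $\operatorname{T}_{L|K}\!\big(\tfrac{\lambda_{\rho}(\alpha)}{\gamma\pi_L}w\big) \in \mathcal{O}_K$, whence $\operatorname{T}_{L|K}(\lambda_{\rho}(\alpha)\delta) \in \eta^n\mathcal{O}_K$; as $\rho_{\eta^n}(v_n) = 0$, the additivity of $a \mapsto \rho_a$ yields $\operatorname{T}_{L|K}(\lambda_{\rho}(\alpha)\delta)\cdot_{\rho} v_n = 0$. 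Combining this vanishing with the two lift statements and Proposition \ref{iwasawamappsi} gives $(\alpha,\beta)_{L,n} = \operatorname{T}_{L|K}(\lambda_{\rho}(\alpha)y)\cdot_{\rho} v_n = \operatorname{T}_{L|K}(\lambda_{\rho}(\alpha)\operatorname{dlog\Bar{D}}_{L,v_n}(\beta))\cdot_{\rho} v_n$, which is \eqref{equalitylogder}.

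The routine but delicate part, and the only place where the precise shape of the lower bound on $\mu(\alpha)$ enters, is the valuation bookkeeping of the third paragraph: one must verify that the loss of $\frac{1}{e(L|K)}$ from the factor $\pi_L^{-1}$ together with the loss of $\max\{\frac{nm_0}{q},\frac{1}{q-1}\}$ from $\gamma^{-1}$ place $\frac{\lambda_{\rho}(\alpha)}{\gamma\pi_L}$ just inside $\mathfrak{p}_{L,1}$, so that the trace-duality characterization of $\mathfrak{X}_{L,1}$ applies. I expect this estimate, together with the verification that one choice of representatives genuinely serves both quotients $\mathfrak{X}_{L,1}/\eta^n\mathfrak{X}_{L,1}$ and $\pi_L^{-1}\mathfrak{X}_{L,1}/\pi_L^{-1}\mathfrak{X}_{L,1}^{(n)}$, to be the main point to get right; everything else is formal.
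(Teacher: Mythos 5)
Your proof is correct and follows essentially the same route as the paper: both arguments reduce the theorem to showing that $\operatorname{dlog\Bar{D}}_{L,v_n}(\beta)$ and $\Psi_{L,v_n}(\beta)$ differ by an element of $\pi_L^{-1}\mathfrak{X}_{L,1}^{(n)} = \pi_L^{-1}\frac{\eta^n}{\gamma}\mathfrak{X}_{L,1}$, which the functional $\operatorname{T}_{L|K}(\lambda_{\rho}(\alpha)\,\cdot\,)\cdot_{\rho}v_n$ annihilates precisely because $\lambda_{\rho}(\alpha)/(\gamma\pi_L)\in\mathfrak{p}_{L,1}$ under the stated bound on $\mu(\alpha)$, and both verify the difference via the defining relation $\operatorname{D}_{L,v_n}(x)=x\Psi_{L,v_n}(x)$ applied to $u$ and $\pi_L$ separately. (Only cosmetic caveat: your common lift $y$ lies in $\pi_L^{-1}\mathfrak{X}_{L,1}$ rather than $\mathfrak{X}_{L,1}$ and lifts $\Psi_{L,v_n}(\beta)$ only modulo $\eta^n\pi_L^{-1}\mathfrak{X}_{L,1}$, but since $\eta^n\pi_L^{-1}\mathfrak{X}_{L,1}\subseteq\pi_L^{-1}\mathfrak{X}_{L,1}^{(n)}$ your coset-constancy argument already absorbs this.)
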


\begin{proof}
To prove \eqref{equalitylogder} is equivalent to prove that 
\begin{equation}
    \operatorname{dlog\Bar{D}}_{L,v_n}(\beta) - \Psi_{L,v_n}(\beta) \in \pi_L^{-1}\mathfrak{X}_{L,1}^{(n)}
\end{equation}
for all $\beta \in L^{\times}$, where $\operatorname{dlog\Bar{D}}_{L,v_n}(\beta)$ and $\Psi_{L,v_n}(\beta)$ are regarded as elements of $\pi_L^{-1}\mathfrak{X}_{L,1}$. Indeed, let $\beta \in L^{\times}$. Since Proposition \ref{iwasawamappsi} shows that
\begin{equation}
    (\alpha,\beta)_{L,n}=  T_{L|K}(\lambda_{\rho}(\alpha)\Psi_{L,v_n}(\beta)) \cdot_{\rho} v_n
\end{equation}
for all $\alpha \in \mathfrak{p}_{L,1}$, then \eqref{equalitylogder} is equivalent to say that
\begin{equation} \label{th.int.1}
    T_{L|K}(\lambda_{\rho}(\alpha)\operatorname{dlog\Bar{D}}_{L,v_n}(\beta)) \cdot_{\rho} v_n = T_{L|K}(\lambda_{\rho}(\alpha)\Psi_{L,v_n}(\beta)) \cdot_{\rho} v_n
\end{equation}
for all $\alpha$ in $L$ such that $\mu(\alpha) > \max\{\frac{nm_0}{q},\frac{1}{q-1}\} + \frac{1}{q-1} + \frac{1}{e(L|K)}$. Obviously, \eqref{th.int.1} is equivalent to 
\begin{equation} \label{th.int.2}
    T_{L|K}(\lambda_{\rho}(\alpha)(\operatorname{dlog\Bar{D}}_{L,v_n}(\beta)-\Psi_{L,v_n}(\beta)) \in \eta^n \mathcal{O}_K
\end{equation}
for all $\alpha \in \gamma \pi_L \mathfrak{p}_{L,1}$, where $\gamma \in L$ is of valuation $\mu(\gamma) = \max\{\frac{nm_0}{q},\frac{1}{q-1}\}$. However, since $\mathfrak{p}_{L,1} = \lambda_{\rho}(\mathfrak{p}_{L,1})$ and $\mu(\lambda_{\rho}(\alpha))=\mu(\alpha)$ whenever $\alpha \in \mathfrak{p}_{L,1}$ (see Lemma \ref{lemmadefrho}), then \eqref{th.int.2} is in turn equivalent to 
\begin{equation} \label{th.int.3}
    T_{L|K}(\gamma \pi_L \alpha (\operatorname{dlog\Bar{D}}_{L,v_n}(\beta)-\Psi_{L,v_n}(\beta)) \in \eta^n \mathcal{O}_K
\end{equation}
for all $\alpha \in \mathfrak{p}_{L,1}$. Finally, by the very definition of $\mathfrak{X}_{L,1}$, \eqref{th.int.3} is equivalent to 
\begin{equation} \label{th.int.4}
     \operatorname{dlog\Bar{D}}_{L,v_n}(\beta) - \Psi_{L,v_n}(\beta) \in \pi_L^{-1} \frac{\eta^n}{\gamma} \mathfrak{X}_{L,1} =\pi_L^{-1}\mathfrak{X}_{L,1}^{(n)}.
\end{equation}
Let us now prove \eqref{th.int.4}. Let $\beta = u \pi_L^k \in L^{\times}$, then $\operatorname{dlog\Bar{D}}_{L,v_n}(\beta) - \Psi_{L,v_n}(\beta)$ is equal to $u^{-1} \operatorname{\Bar{D}}_{L,v_n}(u) + k \pi_L^{-1} \operatorname{\Bar{D}}_{L,v_n}(\pi_L) -\Psi_{L,v_n}(u) -k \Psi_{L,v_n}(\pi_L)$ modulo $\pi_L^{-1}\mathfrak{X}_{L,1}^{(n)}$. However, by the very definition of $\operatorname{\Bar{D}}_{L,v_n}$, we have
\begin{equation} \label{th.int.5}
    \operatorname{\Bar{D}}_{L,v_n}(u) \equiv u \Psi_{L,v_n}(u) \mod{\mathfrak{X}_{L,1}^{(n)}}.
\end{equation}
But as $\mathfrak{X}_{L,1}^{(n)} \subset \pi_L^{-1}\mathfrak{X}_{L,1}^{(n)}$, the congruence \eqref{th.int.5} implies that 
\begin{equation}
    \operatorname{\Bar{D}}_{L,v_n}(u) \equiv u \Psi_{L,v_n}(u) \mod{\pi_L^{-1}\mathfrak{X}_{L,1}^{(n)}}.
\end{equation} 
Thus, we have
\begin{equation}
    u^{-1} \operatorname{\Bar{D}}_{L,v_n}(u) \equiv \Psi_{L,v_n}(u) \mod{\pi_L^{-1}\mathfrak{X}_{L,1}^{(n)}}.
\end{equation}
Moreover, we have 
\begin{equation}
    \operatorname{\Bar{D}}_{L,v_n}(\pi_L) \equiv \pi_L \Psi_{L,v_n}(\pi_L) \mod{\mathfrak{X}_{L,1}^{(n)}},
\end{equation}
and thus, 
\begin{equation}
    \pi_L^{-1} \operatorname{\Bar{D}}_{L,v_n}(\pi_L) \equiv \Psi_{L,v_n}(\pi_L) \mod{ \pi_L^{-1} \mathfrak{X}_{L,1}^{(n)}}.
\end{equation}
This concludes the proof.
\end{proof}
\subsection{Values of $\operatorname{\Bar{D}}_{L,v_n}$ in terms of representation theory} \label{sectionderivationrep}

Let $\mathcal{U}_K$ be the group of units of $K$. In this section, we will consider the continuous representation $\mathbf{r}:\operatorname{Gal}(\Omega|H) \to \operatorname{GL}_1(\mathcal{O}_K) = \mathcal{U}_K$ defined in \cite[Proposition 2.5]{Oukhaba}. The image $\mathbf{r}(\sigma)$ of an element $\sigma \in \operatorname{Gal}(\Omega|H)$ is the unique unit $u$ of $K$ such that $\sigma(\alpha) = \rho_{u}(\alpha)$ for all $\alpha \in W_{\rho}$. This representation is induced by the action of $\operatorname{Gal}(\Omega|H)$ on the module $\varprojlim W_{\rho}$. We will show that we can obtain explicit formulas in terms of invariants of this representation. 
It is obvious that the kernel of $\mathbf{r}$ is $\operatorname{Gal}(\Omega|H_{\rho})$. Thus, $\mathbf{r}$ induces an imbedding $\operatorname{Gal}(H_{\rho}|H) \to \mathcal{U}_K$. 
Reducing modulo $\mathcal{U}_{K,n}=1+\mathfrak{p}_K^n$, we get the map $\mathbf{r}_n$, which, restricted to $\operatorname{Gal}(H_{\rho}^n|H)$, defines an isomorphism
\begin{equation}
    \mathbf{r}_n:\operatorname{Gal}(H_{\rho}^n|H) \to \mathcal{U}_K / \mathcal{U}_{K,n}.
\end{equation}
For an algebraic extension $F$ of $H$, we also denote by $\mathbf{r}:\operatorname{Gal}(\Omega|F) \to \mathcal{U}_K$ the restriction of $\mathbf{r}$ to $\operatorname{Gal}(\Omega|F)$, and by $\mathbf{r}_n:\operatorname{Gal}(F(V_{\rho}^n)|F) \to \mathcal{U}_K / \mathcal{U}_{K,n}$ the restriction to $\operatorname{Gal}(F(V_{\rho}^n)|F)$.
\begin{prop} \label{propdefchi}
Let $m \ge n$ and suppose $L \supset E_{\rho}^m$. There exists a character $\chi_{L,m,n} : L^{\times} \to \mathcal{O}_K/\mathfrak{p}_K^{nm_0}$ such that 
\begin{equation}
    \mathbf{r}_{m_0(m+n)}(\Phi_L(\beta)) = 1 + \eta^m \chi_{L,m,n}(\beta) \in \mathcal{U}_K/\mathcal{U}_{K,(m+n)m_0} \label{defChi}
\end{equation}
for all $\beta \in L^{\times}$. Furthermore, $\chi_{L,m,n}$ satisfies the following.
\begin{enumerate}[label=(\roman*)]
    \item $\chi_{L,m,n}(\beta) = \chi_{E_{\rho}^m,m,n}(\operatorname{N}_{L|E_{\rho}^m}(\beta))$.
    \item Let $v=\rho_a(v_m) \in W_{\rho}^m$, where $v_m$ is a generator of $W_{\rho}^m$ such that $\rho_{\eta^{m-n}}(v_m)=v_n$. Then 
    \begin{equation*}
        (v,\beta)_{L,n} = (a \chi_{L,m,n}(\beta)) \cdot_{\rho} v_n
    \end{equation*}
    for all $\beta \in L^{\times}$. In particular if $v=v_m$, then for every $\beta \in L^{\times}$, we have
    \begin{equation} \label{eqpairingchi}
        (v_m,\beta)_{L,n} = \chi_{L,m,n}(\beta) \cdot_{\rho} v_n.
    \end{equation}
\end{enumerate}
\end{prop}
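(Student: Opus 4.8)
The plan is to define $\chi_{L,m,n}$ as the deviation from $1$ of the unit $\mathbf{r}_{m_0(m+n)}(\Phi_L(\beta))$, and then to deduce properties (i) and (ii) from functoriality of the Artin map and a direct computation of the pairing. For the existence and defining relation, I would first observe that, because $L \supset E_\rho^m = H(W_\rho^m)$, every element of $W_\rho^m$ lies in $L$ and is therefore fixed by $\Phi_L(\beta)$. As $\Phi_L(\beta)$ acts on $W_\rho$ through $\rho_{\mathbf{r}(\Phi_L(\beta))}$, this means $\rho_{\mathbf{r}(\Phi_L(\beta))-1}$ annihilates $W_\rho^m \cong \mathcal{O}_K/\mathfrak{p}_K^{mm_0}$, forcing $\mathbf{r}(\Phi_L(\beta)) \in \mathcal{U}_{K,mm_0}$. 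I would then note that $c \mapsto 1 + \eta^m c$ induces a group isomorphism $\mathcal{O}_K/\mathfrak{p}_K^{nm_0} \xrightarrow{\sim} \mathcal{U}_{K,mm_0}/\mathcal{U}_{K,(m+n)m_0}$: it is a bijection because $\mu(\eta^m) = mm_0$, and it is multiplicative because the cross term in $(1+\eta^m c)(1+\eta^m c') = 1 + \eta^m(c+c') + \eta^{2m}cc'$ has valuation $\ge 2mm_0 \ge (m+n)m_0$, the last inequality being precisely the hypothesis $m \ge n$. Defining $\chi_{L,m,n}(\beta)$ to be the preimage of $\mathbf{r}_{m_0(m+n)}(\Phi_L(\beta))$ under this isomorphism yields \eqref{defChi}, and the character property is immediate since $\Phi_L$, $\mathbf{r}$ and the isomorphism are all homomorphisms.

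\emph{Property (i)} is the functoriality of the norm residue map. Since $H_\rho|H$ is abelian, its subextension $H_\rho|E_\rho^m$ is abelian as well, so $W_\rho \subset H_\rho \subset (E_\rho^m)^{ab}$. The standard compatibility $\Phi_L(\beta)|_{(E_\rho^m)^{ab}} = \Phi_{E_\rho^m}(\operatorname{N}_{L|E_\rho^m}(\beta))$ then shows that the two Galois elements act identically on $W_\rho$, hence have the same image under $\mathbf{r}$, and (i) follows.

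\emph{Property (ii)} I would obtain by computing the pairing from its definition \eqref{eqdefpairing}. Choose a generator $v'_{m+n}$ of $W_\rho^{m+n}$ with $\rho_{\eta^n}(v'_{m+n}) = v_m$; such a $v'_{m+n}$ exists because $\rho_{\eta^n}: W_\rho^{m+n} \to W_\rho^m$ is surjective and any preimage of the generator $v_m$ is again a generator. Setting $\xi = \rho_a(v'_{m+n}) \in L^{ab}$, one has $\rho_{\eta^n}(\xi) = \rho_a(v_m) = v$, so $\xi$ is admissible in \eqref{eqdefpairing}. Using that $\rho_a$ has coefficients in $\mathcal{O}_H \subset L$ (so it commutes with $\Phi_L(\beta)$ and is additive), that $\Phi_L(\beta)$ acts on $v'_{m+n} \in W_\rho^{m+n}$ through $\rho_{1+\eta^m\chi_{L,m,n}(\beta)}$, and the identity $\rho_u - 1 = \rho_{u-1}$ coming from additivity of $\rho$ in its subscript, I find $\Phi_L(\beta)(\xi) - \xi = \rho_{a\eta^m\chi_{L,m,n}(\beta)}(v'_{m+n})$. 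Finally $\rho_{\eta^m}(v'_{m+n}) = \rho_{\eta^{m-n}}(v_m) = v_n$ collapses this to $(a\chi_{L,m,n}(\beta)) \cdot_\rho v_n$, and the special case $a = 1$ gives \eqref{eqpairingchi}.

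The step I expect to demand the most care is the level bookkeeping in the first paragraph: verifying that the multiplicative-to-additive identification is a genuine group isomorphism, which is exactly where the hypothesis $m \ge n$ is used, and checking in (ii) that the truncation $\mathbf{r}_{m_0(m+n)}$ already determines the full action of $\Phi_L(\beta)$ on $W_\rho^{m+n}$, so that replacing $\mathbf{r}$ by its reduction modulo level $(m+n)m_0$ costs nothing.
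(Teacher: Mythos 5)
Your proposal is correct and follows essentially the same route as the paper: define $\chi_{L,m,n}$ via the congruence $\mathbf{r}(\Phi_L(\beta))\equiv 1 \bmod \eta^m$ forced by $\Phi_L(\beta)$ fixing $W_\rho^m\subset L$, get (i) from functoriality of the norm residue map, and get (ii) by computing $\Phi_L(\beta)(\xi)-\xi$ for $\xi\in W_\rho^{m+n}$ with $\rho_{\eta^n}(\xi)=v$. Your extra care with the isomorphism $c\mapsto 1+\eta^m c$ (where $m\ge n$ enters) and the explicit choice $\xi=\rho_a(v'_{m+n})$ merely fill in details the paper leaves implicit.
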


\begin{proof} 
Let $\beta \in L^{\times}$. As $\Phi_L(\beta)$ fixes $L$, thus in particular fixes $E_{\rho}^m$, we have
\begin{equation}
    \mathbf{r}_{m_0(m+n)}(\Phi_L(\beta)) \equiv 1 \mod \eta^m.
\end{equation}
Thus, there exists an element $\chi_{L,m,n}(\beta) \in \mathcal{O}_K/\mathfrak{p}_K^{nm_0}$ such that \eqref{defChi} holds. It is easy to check that $\chi_{L,m,n} : L^{\times} \to \mathcal{O}_K/\mathfrak{p}_K^{nm_0}$ is a group homomorphism. Moreover, the properties of the reciprocity map $\Phi_L$ imply (i). To prove (ii), let $\xi \in\mathfrak{p}_{\Omega}$ be such that $\rho_{\eta^n}(\xi)=v$. Such a $\xi$ exists by \cite[Lemma 2.1]{Papier1}. Since $v \in W_{\rho}^m$, then $\xi \in W_{\rho}^{m+n}$ and
\begin{align*}
    (v,\beta)_{L,n} & = \Phi_{L}(\beta)(\xi)-\xi \\
    & = \mathbf{r}_{m_0(m+n)}(\Phi_L(\beta))\cdot_{\rho} \xi - \xi \\
    & = (\mathbf{r}_{m_0(m+n)}(\Phi_L(\beta))-1) \cdot_{\rho} \xi \\
    & = (\eta^m \chi_{L,m,n}(\beta)) \cdot_{\rho} \xi \\
    & = (\eta^{m-n} \chi_{L,m,n}(\beta)) \cdot_{\rho} v \\
    & = (\eta^{m-n} \chi_{L,m,n}(\beta)) \cdot_{\rho} (a \cdot_{\rho} v_m) \\
    & = (a \chi_{L,m,n}(\beta)) \cdot_{\rho} v_n.
\end{align*}
\end{proof}
\begin{lemma} \label{chiisomstable}
The character $\chi_{L,m,n} : L^{\times} \to \mathcal{O}_K/\mathfrak{p}_K^{nm_0}$ is stable by isomorphism class of $\rho$. In other words, if $t$ is an invertible power series in $\mathcal{O}_H\{\{\tau\}\}$ such that $\rho'_a= t^{-1} \circ \rho_a \circ t$ for all $a \in \mathcal{O}_K$, then the characters defined in Proposition \ref{propdefchi} associated to $\rho$ and $\rho'$ are equal.
\end{lemma}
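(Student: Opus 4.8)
The plan is to reduce the entire statement to a single structural fact: the representation $\mathbf{r}$ is itself insensitive to the isomorphism class of $\rho$. Indeed, the character $\chi_{L,m,n}$ is defined in Proposition \ref{propdefchi} by the relation $\mathbf{r}_{m_0(m+n)}(\Phi_L(\beta)) = 1 + \eta^m \chi_{L,m,n}(\beta)$, in which the reciprocity map $\Phi_L$ does not involve $\rho$ at all and $\eta$ is a fixed element of $K$. Hence, once I show that the units $\mathbf{r}^{\rho}(\sigma)$ and $\mathbf{r}^{\rho'}(\sigma)$ attached to a given $\sigma$ by the two Drinfeld modules agree, the equality of the two characters in $\mathcal{O}_K/\mathfrak{p}_K^{nm_0}$ will follow immediately.

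First I would record how $t$ interacts with the torsion submodules. From $\rho'_a = t^{-1}\circ \rho_a \circ t$ one gets the intertwining relation $t\circ \rho'_a = \rho_a \circ t$ for all $a \in \mathcal{O}_K$. Taking $a = \eta^n$ shows that $t$ carries $W_{\rho'}^n$ into $W_{\rho}^n$ (and $t^{-1}$ carries $W_{\rho}^n$ into $W_{\rho'}^n$), so that $t$ induces an $\mathcal{O}_K$-module isomorphism $t: W_{\rho'} \to W_{\rho}$ respecting the torsion filtration. Because $t$ and $t^{-1}$ have coefficients in $\mathcal{O}_H \subset H$, applying them does not change the generated field, so $H(W_{\rho'}^n) = H(W_{\rho}^n)$ for every $n$; in particular $H_{\rho'} = H_{\rho}$, and the source $\operatorname{Gal}(\Omega|H)$ of $\mathbf{r}$ together with all the fields $E_{\rho}^m$ are the same for both modules.

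Next I would verify that $t$ commutes with the Galois action. Writing $t(x) = \sum_i t_i x^{q^i}$ with $t_i \in \mathcal{O}_H$, any $\sigma \in \operatorname{Gal}(\Omega|H)$ fixes each $t_i$, whence $\sigma(t(x)) = t(\sigma(x))$ for all $x \in \mathfrak{p}_{\Bar{\Omega}}$. Now fix $\sigma$ and set $u = \mathbf{r}^{\rho}(\sigma)$, so that $\sigma(\beta) = \rho_u(\beta)$ for every $\beta \in W_{\rho}$. For $\alpha \in W_{\rho'}$ we have $t(\alpha) \in W_{\rho}$, and therefore
\begin{equation*}
t(\sigma(\alpha)) = \sigma(t(\alpha)) = \rho_u(t(\alpha)) = t(\rho'_u(\alpha)),
\end{equation*}
where the last equality uses the intertwining relation $\rho_u \circ t = t \circ \rho'_u$. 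Since $t$ is invertible, this gives $\sigma(\alpha) = \rho'_u(\alpha)$, so the unit attached to $\sigma$ by the representation of $\rho'$ is again $u$. Thus $\mathbf{r}^{\rho} = \mathbf{r}^{\rho'}$ as homomorphisms $\operatorname{Gal}(\Omega|H) \to \mathcal{U}_K$, and feeding this into the defining relation for $\chi_{L,m,n}$ yields the asserted equality of characters.

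The only genuine point to check carefully is the commutation $\sigma \circ t = t \circ \sigma$ on torsion points, which holds precisely because $t$ is defined over $H$ and $\sigma$ fixes $H$; everything else is bookkeeping. I therefore expect this compatibility of $t$ with the Galois action, combined with the intertwining relation, to be the heart of the argument, while the passage from $\mathbf{r}^{\rho} = \mathbf{r}^{\rho'}$ to the equality of the $\chi$'s is formal.
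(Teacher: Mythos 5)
Your proof is correct, but it takes a genuinely different route from the paper's. The paper never touches the representation $\mathbf{r}$ directly: it sets $v_i'=t^{-1}(v_i)$, invokes the compatibility $(\alpha,\beta)_{\rho',L,n}=t^{-1}\bigl((t(\alpha),\beta)_{\rho,L,n}\bigr)$ of Proposition \ref{PropoertiesPairing1}(vi), and compares the two instances of formula \eqref{eqpairingchi} to get $\chi'_{L,m,n}(\beta)\cdot_{\rho'}v'_n=\chi_{L,m,n}(\beta)\cdot_{\rho'}v'_n$, whence equality of the characters since $v'_n$ generates $W_{\rho'}^n\simeq \mathcal{O}_K/\mathfrak{p}_K^{nm_0}$. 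You instead establish the stronger and more structural fact that $\mathbf{r}^{\rho}=\mathbf{r}^{\rho'}$ on all of $\operatorname{Gal}(\Omega|H)$, using that $t$ intertwines the two module structures and commutes with the Galois action because its coefficients lie in $\mathcal{O}_H$; the equality of the $\chi$'s then drops out formally from the defining relation \eqref{defChi}, which involves only $\mathbf{r}$, $\Phi_L$ and $\eta$. Your argument isolates the real reason for the invariance (the representation on $\varprojlim W_{\rho}$ depends only on the isomorphism class of $\rho$) and bypasses the pairing entirely, whereas the paper's is shorter given that Proposition \ref{PropoertiesPairing1}(vi) is already on the shelf. The two points your argument actually leans on --- that $t(\alpha)$ converges in the complete field $H(\alpha)$ so that $\sigma(t(\alpha))=t(\sigma(\alpha))$, and that $t$ can be cancelled from $t(\sigma(\alpha))=t(\rho'_u(\alpha))$ --- are both secured by $t$ being invertible in $\mathcal{O}_H\{\{\tau\}\}$ with coefficients fixed by $\sigma$, so the proof is complete.
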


\begin{proof}
Let $v_m$ be such that $\rho_{\eta^{m-n}}(v_m)=v_n$ and let $v_i'=t^{-1}(v_i)$ for $i=m,n$. Denote by $\chi_{L,m,n}$ (respectively $\chi'_{L,m,n}$) the character defined in Proposition \ref{propdefchi} associated to $\rho$ (respectively $\rho'$). Then by \eqref{eqpairingchi},  we have $\chi'_{L,m,n}(\beta) \cdot_{\rho'} v'_n = (v'_m,\beta)_{\rho',L,n} $ which is equal to $t^{-1}((t(v'_m),\beta)_{\rho,L,n})=t^{-1}((v_m,\beta)_{\rho,L,n})$ by Proposition \ref{PropoertiesPairing1} (vi). Again from \eqref{eqpairingchi}, we conclude that
\begin{align*}
    \chi'_{L,m,n}(\beta) \cdot_{\rho'} v'_n & = t^{-1}((v_m,\beta)_{\rho,L,n}) \\
    & = t^{-1} (\chi_{L,m,n}(\beta) \cdot_{\rho} v_n) \\
    & = \chi_{L,m,n}(\beta) \cdot_{\rho'} t^{-1}(v_n) \\
    & =  \chi_{L,m,n}(\beta) \cdot_{\rho'} v'_n.
\end{align*}
\end{proof}
\begin{prop}
Let $m \ge n$ and suppose $L \supset E_{\rho}^m$ is such that $p$ does not divide the ramification index of the extension $L|E_{\rho}^m$. Let $u$ be a unit in $L$ such that $\mu(1-u) > \max\{\frac{nm_0}{q},\frac{1}{q-1}\} + \frac{1}{q-1}$. Let $f(X)$ and $g(X)$ be power series in $\mathbb{F}_{q_L}[[X]]$ such that $f(\pi_L)=v_m$ and $g(\pi_L)=u$. Then, 
\begin{equation} \label{propeqchi}
    \dfrac{\frac{g'(\pi_L)}{u}}{\frac{f'(\pi_L)}{v_m}} \in \mathfrak{p}_L,
\end{equation}
and
\begin{equation} \label{conguniq}
    \chi_{L,m,n}(u) \equiv \operatorname{T}_{L|K}((\frac{1-u}{u})(1- \dfrac{\frac{g'(\pi_L)}{u}}{\frac{f'(\pi_L)}{v_m}})\operatorname{\Bar{D}}_{L,v_n}(v_m)) \mod \mathfrak{p}_K^{nm_0}.
\end{equation}
\end{prop}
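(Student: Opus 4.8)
The plan is to identify the left-hand side of \eqref{conguniq} with a value of the Kummer pairing and then to evaluate that value through the derivation $\operatorname{\Bar{D}}_{L,v_n}$. By \eqref{eqpairingchi} we have $(v_m,u)_{L,n}=\chi_{L,m,n}(u)\cdot_{\rho}v_n$, and since $v_n$ generates $W_{\rho}^n\simeq\mathcal{O}_K/\mathfrak{p}_K^{nm_0}$, the map $c\mapsto c\cdot_{\rho}v_n$ is injective on $\mathcal{O}_K/\mathfrak{p}_K^{nm_0}$; hence it suffices to write $(v_m,u)_{L,n}$ in the form $\operatorname{T}_{L|K}(\cdots)\cdot_{\rho}v_n$ with the bracketed term matching the right-hand side of \eqref{conguniq}. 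As in Corollary \ref{Dadditive}, I would first reduce to the case $(x,x)_{\rho,L,n}=0$: the left-hand side is invariant under isomorphism of $\rho$ by Lemma \ref{chiisomstable}, the right-hand side transforms according to \eqref{D-D'} together with Proposition \ref{PropoertiesPairing1}(vi), and Lemma \ref{condrhoprime} supplies an isomorphic module with vanishing self-pairing.

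For the valuation statement \eqref{propeqchi}, recall that $v_m$ is a uniformizer of $E_{\rho}^m$, which is totally ramified over $H$ with $e(E_{\rho}^m|K)=q^{mm_0-1}(q-1)$; hence the normalized valuation $w:=e(L|K)\mu$ of $L$ satisfies $w(v_m)=e(L|E_{\rho}^m)=:e'$, so the lowest-degree term of $f$ occurs in degree $e'$, say $a_{e'}X^{e'}$ with $a_{e'}\neq 0$. Then $f'$ has lowest term $e'a_{e'}X^{e'-1}$, and the hypothesis $p\nmid e(L|E_{\rho}^m)$ ensures $e'a_{e'}\neq 0$ in $\mathbb{F}_{q_L}$; therefore $w(f'(\pi_L))=e'-1$ and $w(f'(\pi_L)/v_m)=-1$. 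On the other hand $\mu(1-u)>0$ forces $g(X)\equiv 1$ modulo a high power of $X$, so $w(g'(\pi_L))$ is large and positive; dividing, $\dfrac{g'(\pi_L)/u}{f'(\pi_L)/v_m}$ has strictly positive valuation and lies in $\mathfrak{p}_L$. In particular $f'(\pi_L)/v_m\neq 0$, the ratio is a genuine element of $L$, and $1-\dfrac{g'(\pi_L)/u}{f'(\pi_L)/v_m}$ is a unit of $\mathcal{O}_L$.

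Now assume $(x,x)_{\rho,L,n}=0$ and set $\alpha=v_m u^{-1}\in\mathfrak{p}_L\setminus\{0\}$. Since $\mu(\alpha(1-u))=\mu(v_m)+\mu(1-u)>\tfrac{nm_0}{q}+\tfrac{1}{q-1}$, Lemma \ref{alphauu} gives $(v_m,u)_{L,n}=\operatorname{T}_{L|K}((1-u)\operatorname{D}_{L,v_n}(v_m u^{-1}))\cdot_{\rho}v_n$. Expanding by the Leibniz rule yields $(1-u)\operatorname{D}_{L,v_n}(v_m u^{-1})=\tfrac{1-u}{u}\bigl(\operatorname{D}_{L,v_n}(v_m)-v_m u^{-1}\operatorname{D}_{L,v_n}(u)\bigr)$. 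The next step is to replace $\operatorname{D}_{L,v_n}$ by its reduction $\operatorname{\Bar{D}}_{L,v_n}$ modulo $\mathfrak{X}_{L,1}^{(n)}=\tfrac{\eta^n}{\gamma}\mathfrak{X}_{L,1}$: the discrepancy lies in $(1-u)\mathfrak{X}_{L,1}^{(n)}$, whose elements are of the form $\eta^n\,\tfrac{1-u}{\gamma}\,y$ with $y\in\mathfrak{X}_{L,1}$ and $\tfrac{1-u}{\gamma}\in\mathfrak{p}_{L,1}$ (because $\mu(1-u)>\mu(\gamma)+\tfrac{1}{q-1}$), so by \eqref{defX_L1} its trace lies in $\eta^n\mathcal{O}_K$ and contributes nothing after $\cdot_{\rho}v_n$. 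Substituting $\operatorname{\Bar{D}}_{L,v_n}(v_m)=f'(\pi_L)\operatorname{\Bar{D}}_{L,v_n}(\pi_L)$ and $\operatorname{\Bar{D}}_{L,v_n}(u)=g'(\pi_L)\operatorname{\Bar{D}}_{L,v_n}(\pi_L)$ (Proposition \ref{propdefDbar}) and collecting the scalar coefficient of $\operatorname{\Bar{D}}_{L,v_n}(v_m)$ turns $\operatorname{\Bar{D}}_{L,v_n}(v_m)-v_m u^{-1}\operatorname{\Bar{D}}_{L,v_n}(u)$ into $\bigl(1-\tfrac{g'(\pi_L)/u}{f'(\pi_L)/v_m}\bigr)\operatorname{\Bar{D}}_{L,v_n}(v_m)$; combining with \eqref{eqpairingchi} and the injectivity of $c\mapsto c\cdot_{\rho}v_n$ gives exactly \eqref{conguniq}.

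I expect the main obstacle to be concentrated in the last two steps. First, justifying that the passage from $\operatorname{D}_{L,v_n}$ to $\operatorname{\Bar{D}}_{L,v_n}$ leaves $\operatorname{T}_{L|K}(\cdots)\cdot_{\rho}v_n$ unchanged requires tracking that the ambiguity, after multiplication by $1-u$, is absorbed into $\eta^n\mathcal{O}_K$ — precisely where the tuned valuation of $1-u$ and the definition of $\mathfrak{X}_{L,1}^{(n)}$ must be used together. Second, and more delicate, is the reduction to $(x,x)_{\rho,L,n}=0$: verifying that the right-hand side of \eqref{conguniq} is genuinely invariant under the isomorphism $\rho\rightsquigarrow\rho'$ of Lemma \ref{condrhoprime} forces one to follow how $v_m$, the power series $f$, and $\operatorname{\Bar{D}}_{L,v_n}(v_m)$ transform under $r$, and to check that the unit $r'(0)$ from \eqref{D-D'} cancels. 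The purely algebraic rearrangement producing the ratio is then routine.
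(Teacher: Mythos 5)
Your proof is correct and follows essentially the same route as the paper's: reduction to the case $(x,x)_{\rho,L,n}=0$ via Lemmas \ref{condrhoprime} and \ref{chiisomstable}, application of Lemma \ref{alphauu} with $\alpha = v_m u^{-1}$, the Leibniz rule combined with $\operatorname{\Bar{D}}_{L,v_n}(v_m)=f'(\pi_L)\operatorname{\Bar{D}}_{L,v_n}(\pi_L)$ and $\operatorname{\Bar{D}}_{L,v_n}(u)=g'(\pi_L)\operatorname{\Bar{D}}_{L,v_n}(\pi_L)$, and the same use of $p\nmid e(L|E_{\rho}^m)$ for \eqref{propeqchi}. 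If anything, your justification that replacing $\operatorname{D}_{L,v_n}$ by $\operatorname{\Bar{D}}_{L,v_n}$ is harmless (the ambiguity, multiplied by $1-u$, lands in $\eta^n\mathcal{O}_K$ after the trace) is spelled out more explicitly than in the paper, which only remarks that the hypothesis on $\mu(1-u)$ permits the replacement.
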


\begin{proof}
Since $p$ does not divide the ramification index of $L|E_{\rho}^m$, we have $\mu(f'(\pi_L)) = \mu(f(\pi_L)) - \mu(\pi_L) = \mu(v_m) - \mu(\pi_L)$. Furthermore, since $\mu(1-u) > \max\{\frac{nm_0}{q},\frac{1}{q-1}\} + \frac{1}{q-1}$, we can write $g(X)= 1 + \sum a_i X^i$, where $i \ge 2$ and $a_i \in \mathbb{F}_{q_L}$. Hence, $\mu(g'(\pi_L)) > \mu(\pi_L)$ and therefore, we have \eqref{propeqchi}. Now, let us prove \eqref{conguniq}.
By Lemma \ref{condrhoprime} and Lemma \ref{chiisomstable}, we can suppose that $\rho$ is such that $(x,x)_{\rho,L,n}=0$ for all $x \in \mathfrak{p}_L$. For such a $\rho$ and for $u \in L^{\times}$ such that $\mu(1-u) >\max\{\frac{nm_0}{q},\frac{1}{q-1}\}  + \frac{1}{q-1}$,  we have
\begin{equation} \label{interm3}
    (\alpha u, u)_{L,n} = \operatorname{T}_{L|K}((1-u)\operatorname{D}_{L,v_n}(\alpha)) \cdot_{\rho} v_n
\end{equation}
for all $\alpha \in \mathfrak{p}_L \setminus \{0\}$ by Lemma \ref{alphauu}. We note that the hypothesis on the valuation of $1-u$ allows us to replace $\operatorname{D}_{L,v_n}(\alpha)$ by $\operatorname{\Bar{D}}_{L,v_n}(\alpha)$ in \eqref{interm3}. Let $\alpha$ be such that $\alpha u = v_m$, where $v_m$ is a generator of $W_{\rho}^m$ such that $\rho_{\eta^{m-n}}(v_m)=v_n$. Hence, \eqref{interm3} together with \eqref{eqpairingchi} give us
\begin{equation}
    \chi_{L,m,n}(u) \cdot_{\rho} v_n = (v_m,u)_{L,n} = \operatorname{T}_{L|K}((1-u)\operatorname{\Bar{D}}_{L,v_n}(v_m u^{-1})) \cdot_{\rho} v_n.
\end{equation}
However,  $\operatorname{\Bar{D}}_{L,v_n}(\frac{v_m}{u}) = \frac{1}{u^2} (u \operatorname{\Bar{D}}_{L,v_n}(v_m) - v_m \operatorname{\Bar{D}}_{L,v_n}(u))$. Moreover, we have
\begin{equation*}
    \operatorname{\Bar{D}}_{L,v_n}(u) = g'(\pi_L) \operatorname{\Bar{D}}_{L,v_n}(\pi_L) ~~~\text{and} ~~~ \operatorname{\Bar{D}}_{L,v_n}(v_m) = f'(\pi_L) \operatorname{\Bar{D}}_{L,v_n}(\pi_L).
\end{equation*}
This implies that
\begin{equation*}
    f'(\pi_L) \operatorname{\Bar{D}}_{L,v_n}(u) - f'(\pi_L) g'(\pi_L) \operatorname{\Bar{D}}_{L,v_n}(\pi_L) \in \mathfrak{X}_{L,1}^{(n)}
\end{equation*}
and 
\begin{equation*}
   g'(\pi_L) \operatorname{\Bar{D}}_{L,v_n}(v_m) - f'(\pi_L) g'(\pi_L) \operatorname{\Bar{D}}_{L,v_n}(\pi_L) \in \mathfrak{X}_{L,1}^{(n)},
\end{equation*}
so that 
\begin{equation} \label{int.cong1}
    f'(\pi_L) \operatorname{\Bar{D}}_{L,v_n}(u) -  g'(\pi_L) \operatorname{\Bar{D}}_{L,v_n}(v_m) \in \mathfrak{X}_{L,1}^{(n)}.
\end{equation}
Now, since the calculation in the beginning of this proof shows that $\frac{v_m}{f'(\pi_L)} \in \mathfrak{p}_L$, we can multiply \eqref{int.cong1} by $\frac{v_m}{f'(\pi_L)}$ in the fractional ideal $\mathfrak{X}_{L,1}^{(n)}$. Therefore, we get
\begin{equation*}
    v_m \operatorname{\Bar{D}}_{L,v_n}(u) = v_m \dfrac{ g'(\pi_L)}{f'(\pi_L)} \operatorname{\Bar{D}}_{L,v_n}(v_m) \in \mathfrak{X}_{L,1} / \mathfrak{X}_{L,1}^{(n)}.
\end{equation*}
Finally, we can write
\begin{align*}
    \chi_{L,m,n}(u) & \equiv \operatorname{T}_{L|K}(\frac{1-u}{u^2}( u \operatorname{\Bar{D}}_{L,v_n}(v_m) - v_m \frac{g'(\pi_L)}{f'(\pi_L)} \operatorname{\Bar{D}}_{L,v_n}(v_m)) \mod \mathfrak{p}_K^{nm_0} \\
    & \equiv \operatorname{T}_{L|K}((\frac{1-u}{u})(1- \dfrac{\frac{g'(\pi_L)}{u}}{\frac{f'(\pi_L)}{v_m}})\operatorname{\Bar{D}}_{L,v_n}(v_m)) \mod \mathfrak{p}_K^{nm_0}.
\end{align*}
\end{proof}
\begin{lemma} \label{lemmaUniqdetermined}
Let $m \ge n$ and suppose $L \supset E_{\rho}^m$ is such that $p$ does not divide the ramification index of the extension $L|E_{\rho}^m$. Then,  $\operatorname{\Bar{D}}_{L,v_n}(v_m) \in \mathfrak{X}_{L,1} / \frac{\eta^n}{\gamma} \mathfrak{X}_{L,1}$ is uniquely determined by \eqref{conguniq}.
\end{lemma}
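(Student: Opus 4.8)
The plan is to recast the statement as a non-degeneracy (injectivity) assertion and then to show that the special elements appearing in \eqref{conguniq} generate a large enough ideal. Write $s=\max\{\tfrac{nm_0}{q},\tfrac{1}{q-1}\}+\tfrac{1}{q-1}$ and, for a valid unit $u$ (that is, $\mu(1-u)>s$), abbreviate the coefficient occurring in \eqref{conguniq} as $c_u=\bigl(\tfrac{1-u}{u}\bigr)\bigl(1-\tfrac{g'(\pi_L)/u}{f'(\pi_L)/v_m}\bigr)$. Suppose $D\in\mathfrak{X}_{L,1}/\mathfrak{X}_{L,1}^{(n)}$ also satisfies $\chi_{L,m,n}(u)\equiv \operatorname{T}_{L|K}(c_u D)\bmod\mathfrak{p}_K^{nm_0}$ for every valid $u$. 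Setting $\Delta=D-\operatorname{\Bar{D}}_{L,v_n}(v_m)$ and subtracting the two congruences, it suffices to prove that $\operatorname{T}_{L|K}(c_u\Delta)\in\mathfrak{p}_K^{nm_0}=\eta^n\mathcal{O}_K$ for all valid $u$ forces $\Delta\in\mathfrak{X}_{L,1}^{(n)}$.

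First I would reformulate the target. Since $\mathfrak{X}_{L,1}^{(n)}=\tfrac{\eta^n}{\gamma}\mathfrak{X}_{L,1}$ with $\mu(\gamma)=\max\{\tfrac{nm_0}{q},\tfrac{1}{q-1}\}$, the definition \eqref{defX_L1} of $\mathfrak{X}_{L,1}$ shows that $\Delta\in\mathfrak{X}_{L,1}^{(n)}$ is equivalent to $\operatorname{T}_{L|K}(\gamma\alpha\,\Delta)\in\eta^n\mathcal{O}_K$ for all $\alpha\in\mathfrak{p}_{L,1}$, hence (as $\mu(\gamma\alpha)>s$) to $\operatorname{T}_{L|K}(y\Delta)\in\eta^n\mathcal{O}_K$ for every $y$ in the fractional ideal $V=\{y\in L:\ \mu(y)>s\}\subset\mathfrak{p}_{L,1}$. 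Thus I introduce the continuous $\mathcal{O}_K$-linear map $\Lambda:V\to\mathcal{O}_K/\mathfrak{p}_K^{nm_0}$, $\Lambda(y)=\operatorname{T}_{L|K}(y\Delta)$, which is well defined because $\Delta\in\mathfrak{X}_{L,1}$ and $V\subset\mathfrak{p}_{L,1}$. By hypothesis $\Lambda$ kills every $c_u$, and the goal becomes $\Lambda\equiv 0$ on $V$.

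The heart of the argument is that the elements $c_u$ topologically generate $V$. Here I would use \eqref{propeqchi}, which guarantees $\tfrac{g'(\pi_L)/u}{f'(\pi_L)/v_m}\in\mathfrak{p}_L$ (this is exactly where $p\nmid e(L|E_{\rho}^{m})$ enters). Combined with $\tfrac{1-u}{u}=(1-u)+(1-u)^2+\cdots$, this shows $\mu(c_u)=\mu(1-u)$ and, more precisely, that $c_u$ has the same leading term as $1-u$. Consequently, for every integer $k$ with $k/e(L|K)>s$ and every residue in $\mathbb{F}_{q_L}$, choosing $u=1-a\pi_L^{k}$ (with $a$ a lift of that residue) yields a valid $c_u$ whose leading term is $a\pi_L^{k}$. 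Given an arbitrary $y\in V$ I would then peel off leading terms: choose $c_{u_1}$ matching the leading term of $y$, so that $y-c_{u_1}\in V$ has strictly larger valuation, and iterate to obtain a convergent expansion $y=\sum_{j\ge 1}c_{u_j}$. Because $\operatorname{T}_{L|K}$ sends elements of sufficiently large valuation into $\mathfrak{p}_K^{nm_0}$ (a bound coming from the different $\mathcal{D}(L|K)$), the map $\Lambda$ is continuous and vanishes on a high enough step of the filtration of $V$; hence additivity gives $\Lambda(y)=\sum_j\Lambda(c_{u_j})=0$. Therefore $\Lambda\equiv 0$, so $\Delta\in\mathfrak{X}_{L,1}^{(n)}$ and $D=\operatorname{\Bar{D}}_{L,v_n}(v_m)$ in the quotient, proving uniqueness.

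The main obstacle is precisely this generation step: the assignment $u\mapsto c_u$ is not additive, so one cannot simply invoke non-degeneracy of the trace form against the whole ideal $\mathfrak{p}_{L,1}$. The successive-approximation argument circumvents this, but it relies essentially on the fact that $c_u$ and $1-u$ share a leading term, which in turn is exactly the content of \eqref{propeqchi} together with the hypothesis $p\nmid e(L|E_{\rho}^{m})$; the remaining care is in checking the continuity of $\Lambda$ and the convergence of the telescoping sum, both of which are routine consequences of the valuation estimates already used above.
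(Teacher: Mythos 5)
Your proof is correct and follows essentially the same route as the paper's: both reduce uniqueness, via the trace-dual description \eqref{defX_L1} of $\mathfrak{X}_{L,1}$ and the observation from \eqref{propeqchi} that the coefficient $c_u=(\frac{1-u}{u})(1-\frac{g'(\pi_L)/u}{f'(\pi_L)/v_m})$ equals $1-u$ times a unit, to showing that $\operatorname{T}_{L|K}(c_u(x-x'))\in\mathfrak{p}_K^{nm_0}$ for all admissible $u$ forces $x-x'\in\frac{\eta^n}{\gamma}\mathfrak{X}_{L,1}$. The only difference is that the paper passes directly from this to ``$\operatorname{T}_{L|K}(\gamma\alpha(x-x'))\in\mathfrak{p}_K^{nm_0}$ for all $\alpha\in\mathfrak{p}_{L,1}$'' without comment, whereas you correctly flag that $u\mapsto c_u$ is neither additive nor literally surjective onto $\gamma\mathfrak{p}_{L,1}$ and supply the missing successive-approximation step showing the $c_u$ topologically generate that ideal --- a detail the paper elides.
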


\begin{proof}
Let $x$ and $x'$ be two elements in $\mathfrak{X}_{L,1}$ such that
\begin{equation}
    \operatorname{T}_{L|K}((\frac{1-u}{u})(1- \dfrac{\frac{g'(\pi_L)}{u}}{\frac{f'(\pi_L)}{v_m}})x) \equiv \operatorname{T}_{L|K}((\frac{1-u}{u})(1- \dfrac{\frac{g'(\pi_L)}{u}}{\frac{f'(\pi_L)}{v_m}})x') \mod \mathfrak{p}_K^{nm_0},
\end{equation}
for all $u \in \mathcal{U}_L$ such that $\mu(1-u) > \max\{\frac{nm_0}{q},\frac{1}{q-1}\}  + \frac{1}{q-1}$. This means that
\begin{equation} \label{interm4}
    \operatorname{T}_{L|K}((\frac{1-u}{u})(1- \dfrac{\frac{g'(\pi_L)}{u}}{\frac{f'(\pi_L)}{v_m}})(x-x')) \in \mathfrak{p}_K^{nm_0}
\end{equation}
for all units $u \in L$ such that $\mu(1-u) > \max\{\frac{nm_0}{q},\frac{1}{q-1}\}  + \frac{1}{q-1}$. We need to prove that $x-x' \in  \mathfrak{X}_{L,1}^{(n)}$. Since we are considering any $u$ such that $\mu(1-u) > \max\{\frac{nm_0}{q},\frac{1}{q-1}\}  + \frac{1}{q-1}$, then we can write $1-u = \gamma \alpha$, where $\gamma \in L$ is of valuation $\max\{\frac{nm_0}{q},\frac{1}{q-1}\}$ and $\alpha$ varies in $\mathfrak{p}_{L,1} = \lambda_{\rho}(\mathfrak{p}_{L,1})$. 
Furthermore, the element $1- \dfrac{\frac{g'(\pi_L)}{u}}{\frac{f'(\pi_L)}{v_m}}$ is a unit in $L$. Therefore, $x$ and $x'$ are such that
\begin{equation}
  \operatorname{T}_{L|K}(\gamma \alpha (x-x')) \in \mathfrak{p}_K^{nm_0} 
\end{equation}
for all $\alpha \in \mathfrak{p}_{L,1}$. This yields that $x-x' \in \frac{\eta^n}{\gamma} \mathfrak{X}_{L,1} = \mathfrak{X}_{L,1}^{(n)}$.
\end{proof}
\subsection{Explicit formulas in a particular case}
In this section, we place ourselves in the case where  $ \rho_{\eta} \equiv \tau^{m_0} \mod \mathfrak{p}_H$ and $L = E_{\rho}^m \supset E_{\rho}^n$ for an integer $m \ge n$. As previously shown in Theorem \ref{mainTH}, we have
\begin{equation} \label{resultPaper2}
(\alpha,v_m)_{L,n}=  T_{L|K}(\lambda_{\rho}(\alpha)\frac{1}{v_m}\operatorname{\Bar{D}}_{L,v_n}(v_m)) \cdot_{\rho} v_n
\end{equation}
for all $\alpha \in \mathfrak{p}_L$ such that $\mu(\alpha) \ge\max\{\frac{nm_0}{q},\frac{1}{q-1}\}+\frac{1}{q-1}+ \frac{1}{q^{nm_0}(q-1)}$. On the other hand, we can prove using \cite{Papier1}, that for the same condition on $\alpha$, we have
\begin{equation} \label{resultPaper1}
     (\alpha,v_m)_{L,n}= \frac{1}{\eta^m} T_{L|K}(\lambda_{\rho}(\alpha)\frac{1}{v_m}) \cdot_{\rho} v_n.
\end{equation}
Indeed,
\begin{align*}
    (\alpha,v_m)_{L,n} & = (\rho_{\eta^{m-n}}(\alpha),v_m)_{L,m} \quad \quad \quad \quad \quad \quad  \quad \quad \quad  \text{(by \cite[Proposition 2.2]{Papier1})}\\
     & = \frac{1}{\eta^m} T_{L|K}(\lambda_{\rho}(\rho_{\eta^{m-n}}(\alpha))\frac{1}{v_m}) \cdot_{\rho} v_m \quad \quad  \quad   \text{(by \cite[Theorem 5.7]{Papier1})}\\
     & = \frac{1}{\eta^m} T_{L|K}(\eta^{m-n}\lambda_{\rho}(\alpha)\frac{1}{v_m}) \cdot_{\rho} v_m\\
     & = \frac{1}{\eta^m} T_{L|K}(\lambda_{\rho}(\alpha)\frac{1}{v_m}) \cdot_{\rho} v_n.
\end{align*}
Here, we can apply \cite[Theorem 5.7]{Papier1} for $(\rho_{\eta^{m-n}}(\alpha),v_m)_{L,m}$ because $\mu(\rho_{\eta^{m-n}}(\alpha)) \ge \max\{\frac{mm_0}{q},\frac{1}{q-1}\}+\frac{1}{q-1}+ \frac{1}{q^{mm_0}(q-1)}$ for all $m \ge n$ (see proof of \cite[Lemma 4.1]{Papier1}).
\begin{prop} \label{propValueD}
We have
\begin{equation} \label{valueD}
    \operatorname{\Bar{D}}_{E_{\rho}^m,v_n}(v_m) = \frac{1}{\eta^m}.
\end{equation}   
\end{prop}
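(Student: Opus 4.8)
The plan is to compare the two expressions for the pairing $(\alpha, v_m)_{L,n}$ derived just above, namely the formula \eqref{resultPaper2} coming from Theorem \ref{mainTH} and the formula \eqref{resultPaper1} coming from the results of \cite{Papier1}. Both hold for the same range of $\alpha$, so by equating them we obtain
\begin{equation*}
    \operatorname{T}_{L|K}\!\Bigl(\lambda_{\rho}(\alpha)\tfrac{1}{v_m}\operatorname{\Bar{D}}_{L,v_n}(v_m)\Bigr) \cdot_{\rho} v_n = \operatorname{T}_{L|K}\!\Bigl(\lambda_{\rho}(\alpha)\tfrac{1}{v_m}\tfrac{1}{\eta^m}\Bigr) \cdot_{\rho} v_n,
\end{equation*}
for all $\alpha \in \mathfrak{p}_L$ with $\mu(\alpha)$ above the stated threshold. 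First I would rewrite this as the vanishing statement
\begin{equation*}
    \operatorname{T}_{L|K}\!\Bigl(\lambda_{\rho}(\alpha)\tfrac{1}{v_m}\bigl(\operatorname{\Bar{D}}_{L,v_n}(v_m) - \tfrac{1}{\eta^m}\bigr)\Bigr) \cdot_{\rho} v_n = 0,
\end{equation*}
which, since the element acts trivially on $v_n$, amounts to saying the trace lies in $\eta^n \mathcal{O}_K$ for all such $\alpha$.

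The key step is then to translate this trace condition, via the defining property \eqref{defX_L1} of $\mathfrak{X}_{L,1}$ together with the isomorphism $\lambda_{\rho}: \mathfrak{p}_{L,1} \to \mathfrak{p}_{L,1}$ of Lemma \ref{lemmadefrho}(iii), into a membership statement for $\operatorname{\Bar{D}}_{L,v_n}(v_m) - \tfrac{1}{\eta^m}$ in the module $\mathfrak{X}_{L,1}^{(n)}$. This is exactly the uniqueness mechanism already isolated in Lemma \ref{lemmaUniqdetermined}: the element $\tfrac{1}{v_m}$ plays the role of the unit factor, and running $\alpha$ over $\gamma \pi_L \mathfrak{p}_{L,1}$ (with $\mu(\gamma) = \max\{\tfrac{nm_0}{q},\tfrac{1}{q-1}\}$) forces the difference into $\tfrac{\eta^n}{\gamma}\mathfrak{X}_{L,1} = \mathfrak{X}_{L,1}^{(n)}$, precisely the submodule modulo which $\operatorname{\Bar{D}}_{L,v_n}$ takes values. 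Hence the two values coincide in $\mathfrak{X}_{L,1}/\mathfrak{X}_{L,1}^{(n)}$, giving \eqref{valueD}.

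The main obstacle is bookkeeping on the valuations: I must check that $\tfrac{1}{\eta^m}$ actually lies in $\mathfrak{X}_{L,1}$ (so that it represents a class in the same quotient as $\operatorname{\Bar{D}}_{L,v_n}(v_m)$) and that the threshold on $\mu(\alpha)$ in \eqref{resultPaper2} and \eqref{resultPaper1} is low enough to let $\alpha$ range over a full set $\gamma\pi_L\mathfrak{p}_{L,1}$ detecting membership in $\mathfrak{X}_{L,1}^{(n)}$. Here one uses $L = E_{\rho}^m$, so $e(L|K) = q^{nm_0-1}(q-1)\cdot$(something depending on $m$) and $\mu(\mathcal{D}(L|K))$ are explicitly known, and the reduction $\rho_{\eta}\equiv\tau^{m_0}\bmod\mathfrak{p}_H$ pins down $\mu(v_m)$; verifying the relevant inequalities is routine but must be done carefully. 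Once these valuation checks are in place, the argument is a direct application of \eqref{resultPaper2}, \eqref{resultPaper1}, and Lemma \ref{lemmaUniqdetermined}.
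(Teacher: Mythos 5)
Your proposal is correct and is exactly the paper's argument: the paper proves Proposition \ref{propValueD} simply by declaring it "a direct consequence of the explicit formulas \eqref{resultPaper2} and \eqref{resultPaper1}", and your write-up supplies precisely the non-degeneracy/uniqueness mechanism (via the definition of $\mathfrak{X}_{L,1}$ and the range of $\alpha$, noting that $v_m$ is a uniformizer of $L=E_{\rho}^m$ here) that makes that one-line deduction rigorous.
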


\begin{proof}
    This is a direct consequence of the explicit formulas \eqref{resultPaper2} and \eqref{resultPaper1}.
\end{proof}

\begin{corollary} \label{coroCong}
Let $u$ be a unit of $L$ such that $\mu(1-u) > \max\{\frac{nm_0}{q},\frac{1}{q-1}\} + \frac{1}{q-1}$. Then
\begin{equation} \label{lc2}
    \operatorname{N}_{L|K}(u^{-1})-1 \equiv \operatorname{T}_{L|K}((\frac{1-u}{u})(1- \frac{g'(v_m)}{u}v_m))  \mod \mathfrak{p}_K^{(n+m)m_0},
\end{equation}
where $g(X) \in \mathbb{F}_{q_L}[[X]]$ is such that $g(v_m)=u$.
\end{corollary}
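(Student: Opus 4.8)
The plan is to specialize the congruence \eqref{conguniq} to the present situation and combine it with the value computed in Proposition \ref{propValueD} and the defining relation \eqref{defChi} for $\chi_{L,m,n}$. I take $L = E_{\rho}^m$ and choose the uniformizer $\pi_L = v_m$, which is legitimate since $v_m$ generates $W_{\rho}^m$ and $E_{\rho}^m = H(v_m)$ is totally ramified over $H$ with $v_m$ as a prime. With this choice the power series $f \in \mathbb{F}_{q_L}[[X]]$ satisfying $f(\pi_L) = v_m$ is simply $f(X) = X$, so that $f'(\pi_L) = 1$ and $f'(\pi_L)/v_m = 1/v_m$. Moreover $e(L|E_{\rho}^m) = 1$, so the hypothesis that $p$ does not divide this ramification index is automatic and \eqref{conguniq} applies. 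The ratio appearing there collapses to $\frac{g'(v_m)/u}{1/v_m} = \frac{g'(v_m)}{u} v_m$, and substituting $\operatorname{\Bar{D}}_{L,v_n}(v_m) = 1/\eta^m$ from Proposition \ref{propValueD} turns \eqref{conguniq} into
$$\chi_{L,m,n}(u) \equiv \operatorname{T}_{L|K}\left(\left(\frac{1-u}{u}\right)\left(1 - \frac{g'(v_m)}{u} v_m\right)\frac{1}{\eta^m}\right) \mod \mathfrak{p}_K^{nm_0}.$$

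Next, since $\eta^m \in K$ has $K$-valuation $mm_0$, I multiply this congruence by $\eta^m$, which pulls out of the trace and raises the modulus to $\mathfrak{p}_K^{(n+m)m_0}$, giving
$$\eta^m \chi_{L,m,n}(u) \equiv \operatorname{T}_{L|K}\left(\left(\frac{1-u}{u}\right)\left(1 - \frac{g'(v_m)}{u} v_m\right)\right) \mod \mathfrak{p}_K^{(n+m)m_0}.$$
On the other hand, the defining relation \eqref{defChi} reads $\mathbf{r}_{m_0(m+n)}(\Phi_L(u)) = 1 + \eta^m \chi_{L,m,n}(u)$ in $\mathcal{U}_K/\mathcal{U}_{K,(m+n)m_0}$, hence $\eta^m \chi_{L,m,n}(u) \equiv \mathbf{r}_{m_0(m+n)}(\Phi_L(u)) - 1 \mod \mathfrak{p}_K^{(n+m)m_0}$. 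Comparing the two displays reduces the corollary to identifying $\mathbf{r}_{m_0(m+n)}(\Phi_L(u))$ with $\operatorname{N}_{L|K}(u^{-1})$ modulo $\mathcal{U}_{K,(m+n)m_0}$.

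This last identification is the crux. Because $\mathbf{r}$ only sees the action on $W_{\rho} \subset H^{ab}$, and $\Phi_L(u)$ restricts to $\Phi_H(\operatorname{N}_{L|H}(u))$ on $H^{ab}$ by the norm-functoriality of the reciprocity map, I first reduce $\mathbf{r}(\Phi_L(u))$ to $\mathbf{r}(\Phi_H(\operatorname{N}_{L|H}(u)))$. The explicit reciprocity law (relative Lubin--Tate) for the formal $\mathcal{O}_K$-module $\rho$ over $\mathcal{O}_H$ then identifies $\mathbf{r}(\Phi_H(b)) = \operatorname{N}_{H|K}(b)^{-1}$ for a unit $b$, and transitivity of norms $\operatorname{N}_{H|K}\circ\operatorname{N}_{L|H} = \operatorname{N}_{L|K}$ yields $\mathbf{r}(\Phi_L(u)) = \operatorname{N}_{L|K}(u)^{-1} = \operatorname{N}_{L|K}(u^{-1})$. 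Substituting back into the comparison above produces exactly \eqref{lc2}.

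The main obstacle is precisely this reciprocity step: it is where the representation-theoretic input is indispensable, and it is the reason, as noted in the introduction, that no direct local-field proof of the congruence is available. Everything else is the bookkeeping carried out in the first two paragraphs, and the hypothesis $\mu(1-u) > \max\{\frac{nm_0}{q},\frac{1}{q-1}\} + \frac{1}{q-1}$ is inherited verbatim from the proposition establishing \eqref{conguniq}, so no new estimates are needed.
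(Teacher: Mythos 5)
Your proposal is correct and follows essentially the same route as the paper: specialize \eqref{conguniq} at $\pi_L=v_m$, substitute $\operatorname{\Bar{D}}_{L,v_n}(v_m)=1/\eta^m$ from Proposition \ref{propValueD}, multiply by $\eta^m$, and then identify $\eta^m\chi_{L,m,n}(u)$ with $\operatorname{N}_{L|K}(u^{-1})-1$ modulo $\mathfrak{p}_K^{(n+m)m_0}$. The only (cosmetic) divergence is in that last identification: the paper evaluates the pairing $(v_m,u)_{L,n}$ at a root $v_{m+n}$ of $\rho_{\eta^n}(X)=v_m$ and cites the reciprocity law $\Phi_K(\operatorname{N}_{L|K}(u))(v_{m+n})=\rho_{\operatorname{N}_{L|K}(u)^{-1}}(v_{m+n})$ from its earlier work, whereas you read the same fact off \eqref{defChi} together with norm functoriality and the relative Lubin--Tate description $\mathbf{r}(\Phi_H(b))=\operatorname{N}_{H|K}(b)^{-1}$ — the identical input in different packaging.
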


\begin{proof}
Since we proved in Lemma \ref{lemmaUniqdetermined} that $\Bar{\operatorname{D}}_{L,v_n}(v_m)= \frac{1}{\eta^m} \in \mathfrak{X}_{L,1} / \mathfrak{X}_{L,1}^{(n)}$ is uniquely determined by \eqref{conguniq}, we have
\begin{equation} \label{lc1}
    \chi_{L,m,n}(u) \equiv \frac{1}{\eta^m} \operatorname{T}_{L|K}((\frac{1-u}{u})(1- \frac{g'(v_m)}{u}v_m)) \mod \mathfrak{p}_K^{nm_0}
\end{equation}
for all units $u$ of $L$ such that $\mu(1-u) > \max\{\frac{nm_0}{q},\frac{1}{q-1}\} + \frac{1}{q-1}$.
Moreover, we know by \eqref{eqpairingchi} that 
\begin{equation} 
        (v_m,u)_{L,n} = \chi_{L,m,n}(u) \cdot_{\rho} v_n =  \eta ^m \chi_{L,m,n}(u) \cdot_{\rho} v_{m+n},
\end{equation}
where $v_{m+n} \in W_{\rho}^{m+n}$ is such that $\rho_{\eta^n}(v_{m+n})=v_m$. On the other hand, by the definition of $(~,~)_{L,n}$, we have
\begin{equation}
    (v_m,u)_{L,n}= \Phi_L(u)(v_{m+n})-v_{m+n} = \Phi_K(\operatorname{N}_{L|K}(u))(v_{m+n})-v_{m+n}.
\end{equation}
But $ \Phi_K(\operatorname{N}_{L|K}(u))(v_{m+n})= \rho_{\operatorname{N}_{L|K}(u)^{-1}}(v_{m+n})$ by \cite[Proposition 5.1]{Papier1}. Therefore, $(v_m,u)_{L,n}= (\operatorname{N}_{L|K}(u^{-1})-1)\cdot_{\rho} v_{m+n}$ and hence, 
\begin{equation} \label{lc5}
    \operatorname{N}_{L|K}(u^{-1})-1 \equiv \eta ^m \chi_{L,m,n}(u) \mod \mathfrak{p}_K^{(n+m)m_0}.
\end{equation}
Finally, \eqref{lc2} follows from \eqref{lc1} and \eqref{lc5}.
\end{proof}
\bibliographystyle{plain}
\bibliography{biblio.bib}
\end{document}